\newtheorem{thm}{Theorem}[section]
\newtheorem{lem}[thm]{Lemma}
\newtheorem{prop}[thm]{Proposition}
\newtheorem{cor}[thm]{Corollary}
\theoremstyle{definition}
\newtheorem{defn}[thm]{Definition}
\newtheorem{rmk}[thm]{Remark}
\def\Aut{\operatorname{Aut}}
\def\Out{\operatorname{Out}}
\def\Inn{\operatorname{Inn}}
\def\coker{\operatorname{coker}}
\def\tv{\operatorname{tv}}
\begin{document}
\title{Automorphisms of braid groups on orientable surfaces}
\author{Byung Hee An}
\thanks{This work was supported by IBS-R003-D1.}
\address{Center for Geometry and Physics, Institute for Basic Science (IBS), Pohang, Republic of Korea 37673}
\email{anbyhee@ibs.re.kr}
\begin{abstract}
In this paper we compute the automorphism groups $\Aut(\mathbf{P}_n(\Sigma))$ and $\Aut(\mathbf{B}_n(\Sigma))$ of braid groups $\mathbf{P}_n(\Sigma)$ and $\mathbf{B}_n(\Sigma)$ on every orientable surface $\Sigma$, which are isomorphic to group extensions of the extended mapping class group $\mathcal{M}^*_n(\Sigma)$ by the transvection subgroup except for a few cases.

We also prove that $\mathbf{P}_n(\Sigma)$ is always a characteristic subgroup of $\mathbf{B}_n(\Sigma)$ unless $\Sigma$ is a twice-punctured sphere and $n=2$.
\end{abstract}
\subjclass[2010]{Primary 20F36; Secondary 20F28, 32G15}
\keywords{automorphism group, braid group, mapping class group}
\maketitle

\section{Introduction}
Let $\Sigma$ be a surface of finite type without boundary. Unless mentioned otherwise, we always assume that $\Sigma$ is orientable and homeomorphic to $\Sigma_{g,p}=\Sigma_g\setminus\mathbf{p}$, where $\Sigma_g$ denotes a closed surface of genus $g$ and $\mathbf{p}=\{p_1,\dots,p_p\}$ is a set of {\em punctures}.
The {\em ordered configuration space} $F_n(\Sigma)$ is defined to be the space of ordered $n$-tuples of distinct points in $\Sigma$.
The symmetric group $\mathbf{S}_n$ acts on $F_{n}(\Sigma)$ by permuting coordinates, and the orbit space $F_{n}(\Sigma)/\mathbf{S}_n$ is called the {\em unordered configuration space} of $\Sigma$.
The fundamental groups of $F_{n}(\Sigma)$ and $F_{n}(\Sigma)/\mathbf{S}_n$ are called the {\em pure} and {\em (full) $n$-braid groups on $\Sigma$} denoted by $\mathbf{P}_{n}(\Sigma)$ and $\mathbf{B}_{n}(\Sigma)$, respectively.

If we fix a basepoint $\mathbf{z}=(z_1,\dots,z_n)$ for $F_n(\Sigma)$, there is a short exact sequence
\begin{equation*}
1\longrightarrow\mathbf{P}_{n}(\Sigma)\longrightarrow\mathbf{B}_{n}(\Sigma)\stackrel{\rho}{\longrightarrow}\mathbf{S}_n\longrightarrow 1,
\end{equation*}
where the homomorphism $\rho$ is called the {\em induced permutation} and $\rho(\beta)$ for $\beta\in\mathbf{B}_{n}(\Sigma)$ is precisely a permutation of $\mathbf{z}$.
The main objects of study in this paper are the automorphism groups $\Aut(\mathbf{P}_n(\Sigma))$ and $\Aut(\mathbf{B}_n(\Sigma))$.

It is worth remarking that $\mathbf{B}_n(\Sigma_{g,p})$ can be regarded as a subgroup of $\mathbf{B}_{n+p}(\Sigma_g)$. Roughly speaking, it is a finite index subgroup consisting of braids in $\mathbf{B}_{n+p}(\Sigma_g)$ whose first $n$ strands end at the first $n$ marked points. 

The {\em extended mapping class groups} 
$\mathcal{M}^*_{n}(\Sigma)$ and $\mathcal{M}^*(\Sigma)$ are defined to be the groups of
isotopy classes of (possibly orientation-reversing) homeomorphisms of $(\Sigma,\mathbf{z})$ and $\Sigma$, respectively. 
These groups can be regarded as subgroups of the mapping class groups $\mathcal{M}^*_{n+p}(\Sigma_g)$ on $\Sigma_g$ with $(n+p)$-marked points $\mathbf{z}\cup\mathbf{p}$.

There is a short exact sequence due to Birman as follows.
\begin{equation}
\label{eqn:braidMCG}
1\longrightarrow\mathbf{B}_{n}(\Sigma)/Z(\mathbf{B}_{n}(\Sigma))\xrightarrow{Push}\mathcal{M}^*_{n}(\Sigma)
\longrightarrow\mathcal{M}^*(\Sigma)\longrightarrow 1,
\end{equation}
where $Z(\mathbf{B}_n(\Sigma))$ denotes the center.

Evidently, mapping class groups are related with the automorphism groups of braid groups since the group on the left is the inner automorphism group of $\mathbf{B}_{n}(\Sigma)$, a subgroup of the automorphism group $\Aut(\mathbf{B}_n(\Sigma))$.

\subsection{Previous results}

Let us briefly review known results relating automorphisms of braid groups to mapping class groups. The first one we introduce is a generalization of Dehn-Neilsen's theorem.
\begin{thm}\cite[Theorem~3.3.11]{ZVC}\label{thm:peripheral}
Let $\phi\in\Aut(\pi_1(\Sigma))$ be an automorphism. There exists a homeomorphism $f\in\mathcal{M}^*_{1}(\Sigma)$ realizing $\phi$ if and only if $\phi$ preserves the peripheral structure.

In particular, if $\Sigma$ is closed, then such an $f$ always exists.
\end{thm}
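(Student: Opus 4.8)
The plan is to split the equivalence into its two implications, with the backward direction carrying essentially all of the content. For the forward direction, suppose $f\in\mathcal{M}^*_1(\Sigma)$ realizes $\phi$. Viewing $\Sigma=\Sigma_g\setminus\mathbf{p}$, the homeomorphism $f$ extends across the punctures to a homeomorphism of the closed surface $\Sigma_g$ that permutes the puncture set $\mathbf{p}$; since a small simple loop $c_i$ encircling $p_i$ is then carried to a small simple loop encircling $f(p_i)$, the induced map $\phi=f_*$ sends each peripheral class to another peripheral class. Hence $\phi$ preserves the peripheral structure, and this direction needs nothing beyond the observation that a homeomorphism permutes the ends of $\Sigma$.

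For the backward direction I would realize $\phi$ in two stages. First, because every surface of finite type is aspherical, a $K(\pi_1(\Sigma),1)$, obstruction theory produces a based homotopy equivalence $f_0\colon(\Sigma,z_1)\to(\Sigma,z_1)$ with $(f_0)_*=\phi$, unique up to based homotopy. The real work is to promote $f_0$ to a homeomorphism. Assuming $\chi(\Sigma)<0$, I would pass to the universal cover and use hyperbolic geometry: equip $\Sigma$ with a complete finite-area hyperbolic metric, so that $\pi_1(\Sigma)$ is realized as a Fuchsian group $\Gamma<\mathrm{PSL}_2(\mathbb{R})$ acting on $\widetilde\Sigma=\mathbb{H}^2$, with the punctures corresponding to cusps and the peripheral elements to the parabolics of $\Gamma$. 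A lift $\widetilde{f_0}$ is then a $\phi$-equivariant quasi-isometry of $\mathbb{H}^2$ and extends to a $\phi$-equivariant homeomorphism $\partial\phi$ of the circle at infinity $S^1=\partial\mathbb{H}^2$.

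The crux — and the step at which the hypothesis is consumed — is to upgrade $\partial\phi$ to a genuine $\phi$-equivariant self-homeomorphism of $\mathbb{H}^2$ descending to $\Sigma$. When $\Sigma$ is closed the group $\Gamma$ is cocompact, there are no peripheral subgroups, and the classical Nielsen–Baer coning construction over $\partial\phi$ already yields the required homeomorphism of $\mathbb{H}^2/\Gamma=\Sigma$; this proves the ``in particular'' clause. When $p\ge 1$ the quotient is noncompact and one must control the behaviour on the cusp neighbourhoods: the coned map descends to a homeomorphism precisely when $\phi$ sends parabolic fixed points to parabolic fixed points bijectively, which is exactly the statement that $\phi$ permutes the peripheral conjugacy classes. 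Thus the peripheral condition is the precise obstruction to closing up the ends, and once it holds the resulting map realizes $\phi$ in $\mathcal{M}^*_1(\Sigma)$ (allowing orientation-reversal is what lets us hit the full extended mapping class group rather than its orientation-preserving index-two subgroup).

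I expect this upgrading step to be the \emph{main obstacle}, together with the bookkeeping near the cusps when $p\ge 1$; by contrast the homotopy-theoretic realization of $\phi$ is formal. The hyperbolic argument also excludes the finitely many surfaces with $\chi(\Sigma)\ge 0$, namely the torus and the annulus, where $\pi_1(\Sigma)$ is $\mathbb{Z}^2$ or $\mathbb{Z}$; these I would dispatch directly, realizing $\Aut(\mathbb{Z}^2)\cong GL_2(\mathbb{Z})$ by affine homeomorphisms and treating the rank-one case by hand. An alternative to the geometric route is the combinatorial approach of Zieschang–Vogt–Coldewey through canonical generating systems of planar discontinuous groups, in which the peripheral hypothesis resurfaces as the demand that $\phi$ respect the surface relation $\prod_j[a_j,b_j]\prod_i c_i=1$ up to the permitted permutation of the $c_i$; I would adopt whichever framework makes the cusp analysis cleanest.
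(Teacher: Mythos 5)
The paper offers no proof of this statement: it is quoted from Zieschang--Vogt--Coldewey, whose argument is the combinatorial one you mention only in your last sentence (canonical generating systems of planar discontinuous groups and Nielsen's method), so there is no internal argument to compare against. Your forward direction, your dispatch of the cases $\chi(\Sigma)\ge 0$, and your overall architecture for the closed case --- realize $\phi$ by a based homotopy equivalence via asphericity, lift, extend to the circle at infinity, and cone --- are the standard Dehn--Nielsen--Baer proof and are fine.

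The genuine gap is the assertion that for $p\ge 1$ a lift $\widetilde{f_0}$ is automatically a $\phi$-equivariant quasi-isometry of $\mathbb{H}^2$. When $\Sigma$ has punctures, $\Gamma$ is a non-cocompact lattice, $\pi_1(\Sigma)$ is free and quasi-isometric to a tree rather than to $\mathbb{H}^2$, and a lift of an arbitrary based homotopy equivalence need not be proper, let alone a quasi-isometry; consequently the boundary homeomorphism $\partial\phi$ does not exist for free. (If it did, every automorphism of $F_2=\pi_1(\Sigma_{0,3})$ would act on the circle compatibly with $\Gamma$, which is false: $\Out(F_2)\simeq GL(2,\mathbb{Z})$ is strictly larger than $\mathcal{M}^*(\Sigma_{0,3})$.) The peripheral hypothesis must therefore be consumed \emph{before} the boundary-extension step, not only when descending near the cusps as you propose: the standard repair is to pass to the compact core bounded by horocycles, on whose universal cover --- a convex subset of $\mathbb{H}^2$ with Cantor limit set --- $\Gamma$ acts cocompactly, use the hypothesis that $\phi$ permutes the peripheral subgroups to homotope $f_0$ to a map preserving this truncation, and only then invoke Milnor--\v{S}varc and extend over the complementary gaps of the circle. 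With that reordering the rest of your cusp bookkeeping goes through; alternatively the ZVC combinatorial route avoids the issue entirely.
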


Hence this theorem gives a complete criterion for a given automorphism of $\mathbf{B}_{1}(\Sigma)=\pi_1(\Sigma)$ to be realized by a homeomorphism. 

For $S^2$ with $n\le3$, the braid group $\mathbf{B}_n(S^2)$ is finite, and so is $\Aut(\mathbf{B}_n(S^2))$.
Bellingeri in \cite[Theorem~6.2]{Bel2} showed that for $n\ge 4$, $\Out(\mathbf{B}_n(S^2))\simeq \mathbb{Z}_2\times\mathbb{Z}_2$
which is isomorphic to $\tv(\mathbf{B}_n(S^2))\times\mathcal{M}^*(S^2)$, where $\tv(G)$ is the {\em transvection subgroup} of $G$ defined as the kernel of $\Aut(G)\to\Aut(G/Z(G))$.
Indeed, $\tv(\mathbf{B}_n(S^2))\simeq\mathbb{Z}_2$ is generated by $\sigma_i\mapsto\sigma_i\Delta^2$ for each Artin generator $\sigma_i$ and the central element $\Delta^2$ in $\mathbf{B}_n(S^2)$.
Therefore, this result together with the Birman exact sequence (\ref{eqn:braidMCG}) implies that for $\Sigma=S^2$ and $n\ge 4$,
\begin{align}\label{eq:tv}
\Aut(\mathbf{B}_n(\Sigma))\simeq \tv(\mathbf{B}_n(\Sigma))\rtimes\mathcal{M}^*_n(\Sigma).
\end{align}
Moreover, this can be obtained from the following theorem due to Ivanov and Korkmaz as well.
\begin{thm}\cite[Theorem~2]{Iv2}\cite[Theorem~1]{K}\label{thm:AutMCG} Suppose that $n\ge 5$.
Then any isomorphism between two finite index subgroups of $\mathcal{M}^*_{n}(S^2)$ is the restriction of an inner automorphism of $\mathcal{M}^*_{n}(S^2)$.
\end{thm}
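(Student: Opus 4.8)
The statement is the rigidity theorem of Ivanov and Korkmaz, and the plan is to reduce it to the combinatorial rigidity of the curve complex. Write $S = S^2$ with the $n$ marked points $\mathbf{z}$, and let $\mathcal{C}(S)$ denote its curve complex: vertices are isotopy classes of essential simple closed curves on $(S,\mathbf{z})$, and a set of vertices spans a simplex precisely when the curves can be realized disjointly. The backbone of the argument is Ivanov's theorem that for $n \ge 5$ the natural map $\mathcal{M}^*_n(S) \to \Aut(\mathcal{C}(S))$ is an isomorphism; in particular the action of $\mathcal{M}^*_n(S)$ on $\mathcal{C}(S)$ is faithful. Given an isomorphism $\psi \colon G_1 \to G_2$ between two finite index subgroups of $\mathcal{M}^*_n(S)$, the goal is to manufacture a simplicial automorphism $\psi_* \in \Aut(\mathcal{C}(S))$ out of $\psi$, read off from it a mapping class $g$ with $\psi_* = g_*$, and then verify that $\psi$ is exactly conjugation by $g$.

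First I would pin down the Dehn twists group-theoretically. Since $G_1$ and $G_2$ have finite index, they need not contain a given twist $T_c$, but they do contain a power $T_c^{k}$ for some $k=k(c)\ge 1$, and the curve $c$ is recovered from any such power. The crux is a purely algebraic characterization, inside a finite index subgroup, of those elements that are powers of a single Dehn twist. Following Ivanov, such a characterization is extracted from the structure of centralizers and from the ranks of maximal free abelian subgroups: powers of Dehn twists are the infinite-order, non-pseudo-Anosov elements whose centralizers are as large as possible among such elements, and one further uses root-taking and divisibility to separate a power of a single twist from a genuine multitwist. Because $\psi$ is an isomorphism it preserves all of these notions --- order, commutation, centralizers, roots --- so it carries powers of Dehn twists to powers of Dehn twists and hence induces a bijection $\psi_*$ on the vertex set of $\mathcal{C}(S)$.

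Next I would promote $\psi_*$ to a simplicial automorphism. Two Dehn twists $T_c$ and $T_d$ commute if and only if $c$ and $d$ are disjoint or equal, and the same holds for their powers; since $\psi$ preserves commutation, $\psi_*$ preserves disjointness and therefore extends to a simplicial automorphism of $\mathcal{C}(S)$. By Ivanov's rigidity there is a unique $g \in \mathcal{M}^*_n(S)$ with $\psi_* = g_*$. It remains to show that $\psi$ agrees with the inner automorphism $c_g \colon x \mapsto g x g^{-1}$ on all of $G_1$. On each power of a twist this follows from the identity $h T_c h^{-1} = T_{h(c)}^{\pm 1}$ together with $\psi_*(c) = g(c)$, so $\psi$ and $c_g$ coincide on the finite index subgroup $H \le G_1$ generated by all twist powers contained in $G_1$. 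The automorphism $\alpha = c_g^{-1}\circ\psi$ of $G_1$ then fixes $H$ pointwise, so for every $x \in G_1$ the element $\alpha(x)x^{-1}$ centralizes $H$; but an element commuting with every $T_c^{k}\in H$ fixes each curve $c$ and hence acts trivially on $\mathcal{C}(S)$, so by faithfulness (valid for $n \ge 5$, the marked points destroying the hyperelliptic-type symmetries that obstruct faithfulness in low complexity) it is trivial. Thus $\alpha = \mathrm{id}$ and $\psi = c_g|_{G_1}$ is the restriction of an inner automorphism of $\mathcal{M}^*_n(S)$.

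The main obstacle is the first step: the algebraic detection of twist powers inside an \emph{arbitrary} finite index subgroup. One has no access to the twists themselves, only to some of their powers, and must separate twist powers from the many other infinite-order reducible mapping classes --- multitwists and powers of maps that are pseudo-Anosov only on a subsurface --- using solely the internal group theory of $G_i$: centralizer shapes, maximal abelian ranks, and divisibility. Arranging these invariants to survive the passage to finite index, and to stay sharp enough to recover a \emph{single} curve rather than a multicurve, is exactly where the technical labor of Ivanov and Korkmaz lies. The low-complexity exclusions forcing $n \ge 5$ enter precisely here, since for $n \le 4$ the curve complex degenerates and this dictionary between algebra and curves is no longer faithful.
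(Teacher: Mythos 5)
A preliminary remark: the paper offers no proof of this statement --- it is quoted as an external result of Ivanov \cite{Iv2} and Korkmaz \cite{K} --- so your proposal can only be compared with the strategy of those sources. Your outline is indeed their strategy: detect powers of Dehn twists algebraically inside the finite index subgroups via centralizers and ranks of abelian subgroups, pass to a simplicial automorphism of the curve complex, invoke the rigidity $\Aut(\mathcal{C}(S_{0,n}))\simeq\mathcal{M}^*_n(S^2)$ for $n\ge5$ (for punctured spheres this is Korkmaz's theorem rather than Ivanov's), and untwist by the resulting mapping class $g$. The faithfulness of the action on $\mathcal{C}$ for $n\ge5$ and the final centralizer argument are sound, and you correctly identify the algebraic characterization of twist powers as the technical heart. (One incidental claim is unjustified and unnecessary: the subgroup $H$ generated by the twist powers lying in $G_1$ need not have finite index in $G_1$ --- already for two twists about intersecting curves, high powers generate an infinite-index free group --- but your argument only uses that $H$ is normal in $G_1$ and contains a power of every twist.)

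The one genuine gap is in the untwisting step. From $\psi_*(c)=g(c)$ and $hT_ch^{-1}=T_{h(c)}^{\pm1}$ you may conclude only that $\psi(T_c^{k})$ and $c_g(T_c^{k})$ are both powers of $T_{g(c)}$; nothing yet forces them to be the \emph{same} power, so you cannot assert that $\alpha=c_g^{-1}\circ\psi$ fixes $H$ pointwise. The missing argument is the exponent computation: after replacing $\psi$ by $\alpha$ one has $\alpha(T_c^{k_c})=T_c^{l_c}$ for every $c$ with a priori unknown $l_c$, and one applies $\alpha$ to the identity $T_a^{k_a}T_b^{k_b}T_a^{-k_a}=T^{k_b}_{T_a^{k_a}(b)}$ for a pair with $i(a,b)\neq0$. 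Since $\alpha$ fixes every vertex of $\mathcal{C}$, comparing the supporting curves of the two sides gives $T_a^{l_a}(b)=T_a^{k_a}(b)$, and a nontrivial twist power cannot fix a curve it intersects, so $l_a=k_a$; this also rules out a global sign discrepancy. Ivanov and Korkmaz carry out exactly this step, and without it the proof does not close. With that supplement, your sketch is a faithful (if compressed) account of the cited proof.
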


For $g\ge 1$, Irmak, Ivanov and McCarthy proved in \cite[Theorem~1, Theorem~2]{IIM} that
by using the Nielsen-Thurston's classifications of mapping classes, 
\[
\Aut(\mathbf{P}_n(\Sigma_1)/Z(\mathbf{P}_n(\Sigma_1)))\simeq\mathcal{M}^*_n(\Sigma_1), \quad 
\Aut(\mathbf{B}_n(\Sigma_g))\simeq\Aut(\mathbf{P}_n(\Sigma_g))\simeq\mathcal{M}^*_n(\Sigma_g),
\]
for $g\ge 2$ and $n\ge 3$. 
In 2011, Kida and Yamagata in \cite[Theorem~1.1]{KY} proved a result similar to Theorem~\ref{thm:AutMCG} for pure braid groups with $g,n\ge 2$, and one of its immediate consequences is that $\Aut(\mathbf{P}_2(\Sigma_g))\simeq\mathcal{M}^*_2(\Sigma_g)$ for $g\ge 2$.

Zhang in \cite{Zh1, Zh2} showed that (\ref{eq:tv}) holds for all closed surfaces including non-orientable cases with $n\ge 2$. However some of his arguments are incorrect. See the comment before Lemma~\ref{lem:transvection}. 

On the other hand, for non-closed cases only a few results are known.
In \cite[Theorem~19, Theorem~20]{DG}, Dyer and Grossman computed the automorphism group of the classical braid group
$\mathbf{B}_n(\mathbb{R}^2)$,
which is
$\Aut(\mathbf{B}_{n}(\mathbb{R}^2))\simeq\mathcal{M}^*_{n}(\mathbb{R}^2)$ for all $n\ge 2$. Bell and Margalit in \cite[Theorem~7, Theorem~8]{BM} and Charney and Crisp in \cite[Theorem~1~(ii), Proposition~5]{CC} computed automorphism groups of several Artin groups including $A(B_n)\simeq\mathbf{B}_n(\Sigma_{0,2})$, $A(\tilde C_n)\simeq\mathbf{B}_n(\Sigma_{0,3})$, and the classical pure braid group $P(A_{n-1})\simeq\mathbf{P}_n(\mathbb{R}^2)$.
Their results are based on Theorem~\ref{thm:AutMCG} and can be formulated as follows.
The isomorphism (\ref{eq:tv}) holds for $\Sigma_{0,2}$ with $n\ge 3$ and for $\Sigma_{0,3}$ with $n\ge 2$, and
\begin{align*}
\Aut(\mathbf{P}_{n}(\mathbb{R}^2))\simeq\tv(\mathbf{P}_{n}(\mathbb{R}^2))\rtimes\mathcal{M}^*_{n+1}(S^2).
\end{align*}

Notice that since $\mathbf{P}_{n}(\Sigma_{1,1})\simeq\mathbf{P}_n(\Sigma_1)/Z(\mathbf{P}_n(\Sigma_1))$ \cite[Lemma~17]{BGG}, the once-punctured torus case essentially comes from the closed case, which is known for $n\ge 3$ by \cite[Theorem~1]{IIM}, and therefore all known cases are when Euler characteristic $\chi(\Sigma)\ge -1$.

Motivated by this fact, for a not necessarily closed surface $\Sigma$, we say that $\Sigma$ is {\em generic} if $\chi(\Sigma)<-1$, and call all non-generic missing cases for $\Aut(\mathbf{B}_n(\Sigma))$ {\em exceptional}. They are as follows: (i) the torus $\Sigma_1$ with $n\ge2$;
(ii) the once-punctured torus $\Sigma_{1,1}$ with $n\ge2$; and (iii) the twice-punctured sphere $\Sigma_{0,2}$ with $n=2$. 

We remark that non-generic cases for $\Aut(\mathbf{P}_n(\Sigma))$ are either obvious, reduced to generic cases, or lying in the scope of Theorem~\ref{thm:AutMCG}. See Remark~\ref{rmk:exceptionalPn}.

\subsection{Results}
The main result of this paper is as follows.

\begin{thm}\label{thm:AutPn}
Let $\Sigma$ be a generic surface and $n\ge 2$. Then
\[
\Aut(\mathbf{P}_{n}(\Sigma))\simeq\mathcal{M}^*_{n}(\Sigma),\quad
\Out(\mathbf{P}_{n}(\Sigma))\simeq\mathbf{S}_n\times\mathcal{M}^*(\Sigma),
\]
and
\[
\Aut(\mathbf{B}_{n}(\Sigma))\simeq\mathcal{M}^*_{n}(\Sigma),\quad
\Out(\mathbf{B}_{n}(\Sigma))\simeq\mathcal{M}^*(\Sigma).
\]
\end{thm}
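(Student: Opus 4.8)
The plan is to embed $\mathcal{M}^*_n(\Sigma)$ into both automorphism groups and show these natural maps are isomorphisms, ultimately reducing to the Dehn--Nielsen--Baer theorem (Theorem~\ref{thm:peripheral}) on $\pi_1(\Sigma)$. First I would record that genericity forces $Z(\mathbf{P}_n(\Sigma))=Z(\mathbf{B}_n(\Sigma))=1$, so that $\tv(\mathbf{P}_n(\Sigma))=\tv(\mathbf{B}_n(\Sigma))=1$ and the point-pushing maps of the Birman sequence~(\ref{eqn:braidMCG}) identify the pure and full point-pushing subgroups of $\mathcal{M}^*_n(\Sigma)$ with $\Inn(\mathbf{P}_n(\Sigma))$ and $\Inn(\mathbf{B}_n(\Sigma))$, respectively. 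The action of $\mathcal{M}^*_n(\Sigma)$ on the configuration space then yields homomorphisms $\Phi_{\mathbf{P}}\colon\mathcal{M}^*_n(\Sigma)\to\Aut(\mathbf{P}_n(\Sigma))$ and $\Phi_{\mathbf{B}}\colon\mathcal{M}^*_n(\Sigma)\to\Aut(\mathbf{B}_n(\Sigma))$ carrying these point-pushing subgroups isomorphically onto the inner automorphisms. Each is thereby the middle arrow of a morphism of short exact sequences whose left vertical is already an isomorphism, so by the five lemma it suffices to prove that each $\Phi$ is bijective; as injectivity is the easy direction, the real content is surjectivity --- equivalently, that every outer automorphism is geometric.

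I would handle the pure group first. The Fadell--Neuwirth fibrations exhibit $\mathbf{P}_n(\Sigma)$ as an iterated extension in which, for each strand $i$, the kernel $K_i\simeq\pi_1(\Sigma\setminus\{z_j:j\neq i\})$ of forgetting that strand is a distinguished free normal subgroup with quotient $\mathbf{P}_{n-1}(\Sigma)$. Injectivity of $\Phi_{\mathbf{P}}$ is then immediate: a mapping class in the kernel acts trivially on every $K_i$, hence on $\pi_1(\Sigma)$ via the puncture-filling surjection $K_i\to\pi_1(\Sigma)$, and so is trivial by the faithfulness in Theorem~\ref{thm:peripheral}. For surjectivity, take $\psi\in\Aut(\mathbf{P}_n(\Sigma))$; the goal is to show that $\psi$ permutes the collection $\{K_1,\dots,K_n\}$ by some $\tau\in\mathbf{S}_n$. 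Granting this, after composing $\psi$ with a point-pushing realizing $\tau$ and with an inner automorphism it fixes each $K_i$ and so descends along the tower; by induction on $n$ --- the base case $n=1$ being precisely Theorem~\ref{thm:peripheral} --- the descended map is realized by some $f\in\mathcal{M}^*(\Sigma)$, and comparing $\psi$ with $\Phi_{\mathbf{P}}(f)$ places $\psi$ in the image.

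The main obstacle is exactly the algebraic characterization of $\{K_1,\dots,K_n\}$ as an automorphism-invariant collection. I would pin these subgroups down intrinsically --- for instance through the centralizers of the peripheral (puncture-parallel) elements, or as the distinguished free normal subgroups whose quotient is again a surface braid group of the predicted type --- and it is here that the hypothesis $\chi(\Sigma)<-1$ is indispensable: it supplies enough topological complexity to make this characterization rigid, and its failure is exactly what produces the exceptional surfaces $\Sigma_1$, $\Sigma_{1,1}$, and $\Sigma_{0,2}$ (with small $n$) omitted from the statement. To pass from $\mathbf{P}_n(\Sigma)$ to $\mathbf{B}_n(\Sigma)$ I would invoke that $\mathbf{P}_n(\Sigma)$ is characteristic in $\mathbf{B}_n(\Sigma)$ for generic $\Sigma$ (the paper's second main theorem), so that restriction gives a map $\Aut(\mathbf{B}_n(\Sigma))\to\Aut(\mathbf{P}_n(\Sigma))\simeq\mathcal{M}^*_n(\Sigma)$; this is injective because $\mathbf{P}_n(\Sigma)$ has trivial centralizer in $\mathbf{B}_n(\Sigma)$ (again by genericity), and surjective because each $g\in\mathcal{M}^*_n(\Sigma)$ gives $\Phi_{\mathbf{B}}(g)\in\Aut(\mathbf{B}_n(\Sigma))$ restricting to $\Phi_{\mathbf{P}}(g)$ on $\mathbf{P}_n(\Sigma)$, yielding $\Aut(\mathbf{B}_n(\Sigma))\simeq\mathcal{M}^*_n(\Sigma)$.

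Finally, the outer statements follow formally. For the full group the sequence $1\to\Inn(\mathbf{B}_n(\Sigma))\to\Aut(\mathbf{B}_n(\Sigma))\to\Out(\mathbf{B}_n(\Sigma))\to1$ is identified with the Birman sequence, giving $\Out(\mathbf{B}_n(\Sigma))\simeq\mathcal{M}^*_n(\Sigma)/\mathbf{B}_n(\Sigma)\simeq\mathcal{M}^*(\Sigma)$. For the pure group $\Out(\mathbf{P}_n(\Sigma))\simeq\mathcal{M}^*_n(\Sigma)/\mathbf{P}_n(\Sigma)$ sits in an extension of $\mathcal{M}^*(\Sigma)$ by $\mathbf{S}_n=\mathbf{B}_n(\Sigma)/\mathbf{P}_n(\Sigma)$, and this extension splits as the direct product $\mathbf{S}_n\times\mathcal{M}^*(\Sigma)$ because the two factors admit commuting realizations: the permutations supported in disjoint disks about the points of $\mathbf{z}$, and a section of $\mathcal{M}^*(\Sigma)$ chosen to fix those disks pointwise.
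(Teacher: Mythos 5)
Your overall architecture (inject $\mathcal{M}^*_n(\Sigma)$ into the automorphism groups, prove surjectivity by realizing arbitrary automorphisms geometrically, pass to $\mathbf{B}_n(\Sigma)$ via characteristicness of $\mathbf{P}_n(\Sigma)$, and deduce the $\Out$ statements formally) matches the paper's, and the last two steps are essentially sound. But there is a genuine gap at exactly the point you yourself flag as ``the main obstacle'': you never establish that an arbitrary $\psi\in\Aut(\mathbf{P}_n(\Sigma))$ permutes the kernels $K_i=\mathbf{U}_i$, nor --- what is equally essential and absent from your sketch --- that the restriction of $\psi$ to $K_i$ preserves the peripheral structure of the free group $K_i\simeq\pi_1(\Sigma\setminus\{z_j: j\neq i\})$, which is the hypothesis needed to invoke Theorem~\ref{thm:peripheral}. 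Saying you ``would pin these subgroups down intrinsically \dots through the centralizers of the peripheral elements'' is a statement of intent, not an argument. This is where the paper does all of its work: it characterizes conjugates of the standard generators among all elements of $\mathbf{P}_n(\Sigma)$ by the two conditions $rk(\beta)=1$ and $C(\beta)_{/Z}$ nontrivial and directly indecomposable, and proving that this characterization works requires the Nielsen--Thurston classification, McCarthy's theorem on centralizers of pseudo-Anosov classes, a rank formula for $Z(C(\beta))$ in terms of the canonical reduction system, and a direct-indecomposability theorem for groups abstractly commensurable with surface braid groups; the hypothesis $\chi<-1$ enters precisely to exclude the competing possibility $C(\beta)\simeq\mathbf{P}_n(A)$. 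Separate arguments (strong freeness, the lantern-type relations, and the relators (PSCR) and (PTR)) then upgrade ``generators go to conjugates of generators'' to ``types and endpoints are respected,'' which is the peripheral-structure statement.

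Second, your proposed induction on $n$ cannot work as stated because its base case is false: for a punctured generic surface, $\mathbf{P}_1(\Sigma)=\pi_1(\Sigma)$ is a nonabelian free group, so $\Aut(\mathbf{P}_1(\Sigma))$ is far larger than $\mathcal{M}^*_1(\Sigma)$; Theorem~\ref{thm:peripheral} realizes only those automorphisms preserving the peripheral structure, and the automorphism obtained by descending $\psi$ down the Fadell--Neuwirth tower carries no a priori peripheral information. Moreover, even granting that the descended automorphism of $\mathbf{P}_{n-1}(\Sigma)$ is geometric, this does not determine $\psi$ on $K_i$, so the final comparison of $\psi$ with $\Phi_{\mathbf{P}}(f)$ does not close. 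The paper argues in the opposite direction: it realizes the restriction $\psi|_{\mathbf{U}_i}$ by a homeomorphism (after proving peripheral-structure preservation) and then uses the rigidity statement that an automorphism of $\mathbf{P}_n(\Sigma)$ acting as the identity on a single $\mathbf{U}_i$ is the identity, a consequence of $C_{\mathbf{P}_n(\Sigma)}(\mathbf{U}_i)=Z(\mathbf{P}_n(\Sigma))=1$. A repaired version of your argument would have to replace the induction by this single-kernel rigidity and supply the centralizer analysis in full.
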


This overlaps the previously known cases (when $p=0$ and $n\ge 3$). Otherwise, this result is new and covers all but only a few exceptions, which are computed separately.
Then together with all known and exceptional cases, it can be summarized as follows.

\begin{thm}\label{thm:AutBn}
Let $\Sigma$ be an orientable surface of finite type without boundary, and let $Z=Z(\mathcal{M}^*_n(\Sigma))$.
Then for any $n\ge 2$,
\[
\Aut(\mathbf{B}_n(\Sigma))\simeq \tv(\mathbf{B}_n(\Sigma))\rtimes\mathcal{M}^*_n(\Sigma)/Z\rtimes G_n(\Sigma)
\]
and
\[
\Out(\mathbf{B}_n(\Sigma))\simeq \tv(\mathbf{B}_n(\Sigma))\rtimes\mathcal{M}^*(\Sigma)/Z\rtimes G_n(\Sigma),
\]
where 
\[
G_n(\Sigma)=\begin{cases}
\mathbb{Z}_2 & (\Sigma,n)=(A,2);\\
1&otherwise.
\end{cases}
\]
\end{thm}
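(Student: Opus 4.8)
The plan is to treat Theorem~\ref{thm:AutBn} as a unification of Theorem~\ref{thm:AutPn} with the previously known and the exceptional cases, so that the bulk of the argument is setting up one structural exact sequence and then verifying it against the stated formula surface by surface. Throughout, $A$ denotes the twice-punctured sphere $\Sigma_{0,2}$, and $Z=Z(\mathcal{M}^*_n(\Sigma))$ as in the statement.

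First I would fix the general framework. Since $Z(\mathbf{B}_n(\Sigma))$ is characteristic, every automorphism descends to $\Inn(\mathbf{B}_n(\Sigma))=\mathbf{B}_n(\Sigma)/Z(\mathbf{B}_n(\Sigma))$, and by the very definition of the transvection subgroup this gives an exact sequence
\[
1\longrightarrow\tv(\mathbf{B}_n(\Sigma))\longrightarrow\Aut(\mathbf{B}_n(\Sigma))\xrightarrow{\ \Phi\ }\Aut\bigl(\mathbf{B}_n(\Sigma)/Z(\mathbf{B}_n(\Sigma))\bigr).
\]
By Birman's sequence (\ref{eqn:braidMCG}), $\mathbf{B}_n(\Sigma)/Z(\mathbf{B}_n(\Sigma))$ is a normal subgroup of $\mathcal{M}^*_n(\Sigma)$ via $Push$, so conjugation inside $\mathcal{M}^*_n(\Sigma)$ yields a homomorphism $\mathcal{M}^*_n(\Sigma)\to\Aut(\mathbf{B}_n(\Sigma)/Z(\mathbf{B}_n(\Sigma)))$ whose kernel is the centralizer of the point-pushing subgroup; I would verify that this centralizer is exactly $Z$, so that $\mathcal{M}^*_n(\Sigma)/Z$ embeds in the target of $\Phi$. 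The geometric action of homeomorphisms on configuration spaces supplies a section onto this image, which is what produces the genuine semidirect (rather than merely extension) structure recorded in the statement.

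Next I would identify $\im\Phi$. The content of all the realization results — Theorem~\ref{thm:AutPn} for generic $\Sigma$, together with the cited computations of Dyer--Grossman, Bell--Margalit, Charney--Crisp, Irmak--Ivanov--McCarthy and Bellingeri, and the separate treatment of the exceptional surfaces in the body — is precisely that $\im\Phi=\mathcal{M}^*_n(\Sigma)/Z$ in every case except $(\Sigma,n)=(A,2)$, where exactly one further outer automorphism survives and contributes the factor $G_n(\Sigma)=\mathbb{Z}_2$. Feeding Theorem~\ref{thm:AutPn} into this framework, for generic $\Sigma$ I would check that both $\tv(\mathbf{B}_n(\Sigma))$ and $Z$ vanish, collapsing the formula to $\Aut(\mathbf{B}_n(\Sigma))\simeq\mathcal{M}^*_n(\Sigma)$; for $S^2$ with $n\ge4$ the transvection factor $\tv(\mathbf{B}_n(S^2))\simeq\mathbb{Z}_2$ persists while $Z$ again drops out, reproducing (\ref{eq:tv}); and for the torus, the once-punctured torus and the remaining punctured spheres I would substitute the explicit values of $\tv(\mathbf{B}_n(\Sigma))$, $Z$ and $G_n(\Sigma)$ computed earlier and read off agreement. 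The $\Out$ statements then follow by quotienting the whole diagram by $\Inn(\mathbf{B}_n(\Sigma))$, which lies inside the $\mathcal{M}^*_n(\Sigma)/Z$ factor via $Push$ and meets $\tv(\mathbf{B}_n(\Sigma))$ trivially, so that (\ref{eqn:braidMCG}) replaces $\mathcal{M}^*_n(\Sigma)/Z$ by $\mathcal{M}^*(\Sigma)/Z$.

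The main obstacle is the exceptional case $(\Sigma,n)=(A,2)$. Here $\mathbf{P}_2(A)$ fails to be characteristic in $\mathbf{B}_2(A)$, so there is an automorphism not respecting the induced permutation and hence not realized by any homeomorphism fixing the strands; showing that this produces exactly one new order-two outer class — no more and no less — demands a direct analysis of the small group $\mathbf{B}_2(A)$ and its center rather than the mapping-class machinery used elsewhere. A secondary point, needed to make the iterated semidirect products literally correct, is the compatibility of the geometric section with the transvection kernel; I expect this to be routine once the centralizer computation identifying the kernel of $\mathcal{M}^*_n(\Sigma)\to\Aut(\mathbf{B}_n(\Sigma)/Z(\mathbf{B}_n(\Sigma)))$ with $Z$ is in place.
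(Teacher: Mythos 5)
Your overall architecture matches the paper's: the transvection exact sequence $1\to\tv(\mathbf{B}_n(\Sigma))\to\Aut(\mathbf{B}_n(\Sigma))\to\Aut(\mathbf{B}_n(\Sigma)_{/Z})$, the identification of the kernel of $\mathcal{M}^*_n(\Sigma)\to\Aut(\mathbf{B}_n(\Sigma)_{/Z})$ with $Z=Z(\mathcal{M}^*_n(\Sigma))$ via the centralizer of the point-pushing subgroup (Lemma~\ref{lem:intersection}), the geometric section, and the case-by-case verification feeding in Theorem~\ref{thm:AutPn}, the known results, and the separate analysis of $(A,2)$, $(T,n)$ and $(\Sigma_{1,1},n)$. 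All of that is exactly how the paper assembles the statement, and your flagged difficulties (the extra order-two class $\pi$ for $\mathbf{B}_2(A)$, the compatibility of the section with the transvection kernel when $Z\neq 1$) are the right ones.

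There is, however, one concrete gap: the entire framework you set up — and indeed everything in Sections~2--6 of the paper — operates under the standing hypothesis $\kappa=2g+p+n\ge 4$, but the theorem is asserted for all $n\ge 2$. The three excluded groups $\mathbf{B}_2(\mathbb{R}^2)$, $\mathbf{B}_2(S^2)$ and $\mathbf{B}_3(S^2)$ are not reached by your argument, and the paper's own proof of this theorem consists almost entirely of handling them: the first two are abelian (so the formula is immediate), while $\mathbf{B}_3(S^2)$ is the nonabelian group of order $12$ with $\mathbf{B}_3(S^2)_{/Z}\simeq\mathbf{S}_3$, and one must use that $\mathbf{S}_3$ has trivial center and trivial outer automorphism group to conclude $\Aut(\mathbf{B}_3(S^2))\simeq\tv(\mathbf{B}_3(S^2))\times\Inn(\mathbf{B}_3(S^2))$ with $\Inn(\mathbf{B}_3(S^2))\simeq\mathcal{M}^*_3(S^2)_{/Z}$. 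Without these three verifications your proof does not cover the full range of the statement; with them added, it coincides with the paper's.
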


As an application, we prove the following theorem, which extends the result of Bellingeri and Ivanov in \cite{Bel2, Iv1}.

\begin{thm}\label{thm:Characteristic}
Let $\Sigma$ be a surface of finite type without boundary, possibly non-orientable. Then the pure braid group $\mathbf{P}_n(\Sigma)$ is a characteristic subgroup of the braid group $\mathbf{B}_n(\Sigma)$ except $\mathbf{P}_2(\Sigma_{0,2})$ is not characteristic in $\mathbf{B}_2(\Sigma_{0,2})$.
\end{thm}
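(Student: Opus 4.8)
The plan is to deduce this from the explicit description of $\Aut(\mathbf{B}_n(\Sigma))$ obtained above, using that $\mathbf{P}_n(\Sigma)=\ker\rho$ is characteristic exactly when every $\phi\in\Aut(\mathbf{B}_n(\Sigma))$ satisfies $\phi(\ker\rho)=\ker\rho$. First I would dispatch the two transparent classes. Inner automorphisms preserve $\mathbf{P}_n(\Sigma)$ because it is normal. An automorphism induced by a (possibly orientation-reversing) homeomorphism of $\Sigma$ fixing $\mathbf{z}$ setwise — that is, an element in the image of $\mathcal{M}^*_n(\Sigma)$ — also preserves $\mathbf{P}_n(\Sigma)$: the underlying homeomorphism permutes $\mathbf{z}$, so the induced automorphism intertwines $\rho$ with conjugation by the resulting permutation in $\mathbf{S}_n$ and hence carries $\ker\rho$ to itself.

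Next I would handle transvections. A transvection has the form $t(g)=g\,\zeta(g)$ for a homomorphism $\zeta\colon\mathbf{B}_n(\Sigma)\to Z(\mathbf{B}_n(\Sigma))$, so $t$ preserves $\mathbf{P}_n(\Sigma)$ precisely when $\rho(\zeta(\mathbf{P}_n(\Sigma)))=1$. As $\rho(Z(\mathbf{B}_n(\Sigma)))\subseteq Z(\mathbf{S}_n)$ and $Z(\mathbf{S}_n)=1$ for $n\ge3$, transvections automatically preserve $\mathbf{P}_n(\Sigma)$ once $n\ge3$. For $n=2$ I would verify in each of the finitely many surfaces that no transvection violates the criterion; notably, even on $\Sigma_{0,2}$ the central generator $\Delta=\sigma_1\tau\sigma_1\tau$ satisfies $\rho(\Delta)=1$, so $Z(\mathbf{B}_2(\Sigma_{0,2}))\subseteq\mathbf{P}_2(\Sigma_{0,2})$ and the failure there cannot originate from a transvection, while the degenerate cases $\mathbb{R}^2$ and $S^2$ have abelian or finite braid groups and are checked directly. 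Since $G_n(\Sigma)=1$ away from $(\Sigma_{0,2},2)$ by Theorem~\ref{thm:AutBn}, every automorphism then preserves $\mathbf{P}_n(\Sigma)$, which is therefore characteristic in all non-excluded cases.

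It remains to show that $\mathbf{P}_2(\Sigma_{0,2})$ is genuinely not characteristic. Writing $\mathbf{B}_2(\Sigma_{0,2})\simeq A(B_2)=\langle\sigma_1,\tau\mid\sigma_1\tau\sigma_1\tau=\tau\sigma_1\tau\sigma_1\rangle$, the defining relation is symmetric in $\sigma_1$ and $\tau$, so the diagram involution $\epsilon\colon\sigma_1\leftrightarrow\tau$ defines an automorphism, which realizes the extra factor $G_2(\Sigma_{0,2})\simeq\mathbb{Z}_2$. Because $\rho(\sigma_1)$ is the nontrivial transposition while $\rho(\tau)=1$, the element $\tau$ lies in $\mathbf{P}_2(\Sigma_{0,2})$ whereas $\epsilon(\tau)=\sigma_1$ does not; hence $\epsilon(\mathbf{P}_2(\Sigma_{0,2}))\ne\mathbf{P}_2(\Sigma_{0,2})$ and $\mathbf{P}_2(\Sigma_{0,2})$ is not characteristic.

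I expect the main difficulty to lie not in the exceptional case, which is explicit, but in making the verification uniform across all remaining surfaces. Concretely, for $n=2$ one must rule out, surface by surface, any transvection or residual exotic automorphism that could move $\ker\rho$, and for non-orientable $\Sigma$ — where the main theorems of this paper do not directly apply — one must import the analogous (corrected) automorphism computations and confirm that the same inner, geometric, and transvection classes still exhaust $\Aut(\mathbf{B}_n(\Sigma))$.
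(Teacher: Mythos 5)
Your proposal has the right peripheral pieces --- the counterexample for $\mathbf{B}_2(\Sigma_{0,2})$ via the diagram involution swapping the two generators of $\langle\sigma_1,\zeta_1\mid \zeta_1\sigma_1\zeta_1\sigma_1=\sigma_1\zeta_1\sigma_1\zeta_1\rangle$ is exactly the paper's Lemma~\ref{lem:notchar}, and your observations that inner, geometric, and transvection automorphisms all preserve $\ker\rho$ (the last because $Z(\mathbf{B}_n(\Sigma))=Z(\mathbf{P}_n(\Sigma))\subset\ker\rho$) are correct. But the core strategy is circular. You propose to deduce characteristicity from the decomposition $\Aut(\mathbf{B}_n(\Sigma))\simeq \tv(\mathbf{B}_n(\Sigma))\rtimes\mathcal{M}^*_n(\Sigma)/Z\rtimes G_n(\Sigma)$ of Theorem~\ref{thm:AutBn}; however, in this paper that decomposition is itself \emph{derived from} Theorem~\ref{thm:Characteristic}. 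The restriction map $\Gamma:\Aut(\mathbf{B}_n(\Sigma))\to\Aut(\mathbf{P}_n(\Sigma))$ is only defined once $\mathbf{P}_n(\Sigma)$ is known to be characteristic, and the identification $\Aut(\mathbf{B}_n(\Sigma))\simeq\mathcal{M}^*_n(\Sigma)$ for generic surfaces is obtained precisely by composing $\Gamma$ with Proposition~\ref{prop:AutPn}. The genuinely new cases of Theorem~\ref{thm:Characteristic} --- $g=0$, $p\ge4$, $n\in\{2,4\}$ (these are generic, since $\chi=2-p<-1$) --- are exactly the cases where no independent computation of $\Aut(\mathbf{B}_n(\Sigma))$ is available, so your argument has nothing to stand on there. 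For surfaces where $\Aut(\mathbf{B}_n(\Sigma))$ was already known by other means ($p\le3$, closed surfaces, etc.), your reasoning is fine, but the paper already disposes of those by citation (Ivanov, Artin, Bellingeri, Bell--Margalit, Charney--Crisp) together with the homological Lemma~\ref{lem:2char}.

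What is missing is a direct argument on an arbitrary $\psi\in\Aut(\mathbf{B}_n(\Sigma_{0,p}))$, $p\ge4$, that does not presuppose any structure theorem for $\Aut(\mathbf{B}_n(\Sigma_{0,p}))$. The paper does this by showing $\psi(X(2))\subset X^*(2)$: one takes the least $m$ with $\psi(\zeta_{i,t}^m)\in\mathbf{P}_n(\Sigma)$, compares the centralizers $\psi(C(\zeta_{i,t}^m))$ and $C(\psi(\zeta_{i,t}^m))$ through their common finite-index subgroup to conclude $rk(\psi(\zeta_{i,t}^m))=1$ with directly indecomposable centralizer modulo center, invokes Lemma~\ref{lem:converse} to place $\psi(\zeta_{i,t}^m)$ in $X^*$, and then rules out type~1 images by an $H_1$ computation against the relator $2(n-1)[\sigma_1]+\sum_u[\zeta_u]=0$. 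Only then can one quotient by the $\zeta_t$ to reduce to $\mathbf{B}_n(S^2)$ and apply Bellingeri's result. Your closing paragraph senses that "making the verification uniform" is the hard part, but the obstruction is not uniformity; it is that the input you want to use is a downstream consequence of the very statement being proved.
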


The rest of this paper is organized as follows. In Section~2, we introduce known facts of braid groups and mapping class groups, especially about the centers $Z(\mathbf{B}_n(\Sigma))$ and special normal subgroups $\mathbf{U}_i$ of the pure braid groups which are the kernels of the map $q_{i*}:\mathbf{P}_n(\Sigma)\to\mathbf{P}_{n-1}(\Sigma)$ forgetting the $i$-th strand. 
These two subgroups are very useful in studying not only braid groups but also their automorphism groups. Moreover, we consider natural maps from the mapping class groups to automorphism groups of braid groups, whose images can be thought of as geometric subgroups of $\Aut(\mathbf{B}_n(\Sigma))$ in the sense that they are obviously realizable by homeomorphisms.
On the other hand, the algebraic subgroups can be thought of as coming from the transvection subgroups defined above.

In Section~3, we compute the transvection subgroups for both the pure and full braid groups, and compute automorphism groups for all but the exceptional cases by assuming Theorem~\ref{thm:Characteristic} and Proposition~\ref{prop:AutPn} which is the first half of Theorem~\ref{thm:AutPn}.
In Section~4, by using combinatorial group theory, we find necessary and sufficient conditions for a given automorphism to be realizable, and prove Proposition~\ref{prop:AutPn} by showing that our conditions always occur in the generic case. To this end, we use an analog of the argument of Irmak, Ivanov and McCarthy in \cite{IIM}, and analyze the rank of the center of the centralizer for a given braid.
This argument is then used to prove Theorem~\ref{thm:Characteristic} in Section~5. Finally, all exceptional cases are treated in Section~6.

\begin{rmk}[boundary versus punctures]
Suppose that $\Sigma$ is a {\em compact} surface of genus $g$ with $b\ge1$ boundary components. The mapping class group $\mathcal{M}_n(\Sigma,\partial\Sigma)$ is defined as the group consisting of isotopy classes of orientation-preserving homeomorphisms on $(\Sigma, \mathbf{z})$ fixing $\partial\Sigma$ {\em pointwise}. 
Then 
Dehn twists along boundary components act trivially on braid groups, and moreover they generate the center $Z(\mathcal{M}_n(\Sigma,\partial\Sigma))$ unless $\Sigma$ is the closed disc $D^2$ and $n=2$ \cite[Theorem~5.6]{PR2}.

Moreover, $\mathcal{M}_n(\Sigma,\partial\Sigma)/Z(\mathcal{M}_n(\Sigma,\partial\Sigma))$ is isomorphic to a subgroup of $\mathcal{M}^*_n(\mathring{\Sigma})$ of index $2\cdot b!$,
where $\mathring{\Sigma}$ is the interior of $\Sigma$ and homeomorphic to $\Sigma_{g,b}$.
Here the absence of orientation-reversing maps contributes to the index as much as 2 and the index $b!$ arises since a mapping class on $\Sigma_{g,b}$ may permute the $b$ punctures but a mapping class in $\mathcal{M}_n(\Sigma,\partial\Sigma)$ never permutes boundary components. 
However, one can easily find a mapping class in $\mathcal{M}_n^*(\Sigma_{g,b})\setminus \left(\mathcal{M}_n(\Sigma,\partial\Sigma)/Z(\mathcal{M}_n(\Sigma,\partial\Sigma))\right)$ such that the induced automorphism on the braid group is nontrivial. 
Therefore $\mathcal{M}_n(\Sigma,\partial\Sigma)$ is insufficient for describing the whole automorphism group.

On the contrary, $\mathbf{B}_n(\mathbb{R}^2)$ and $\mathbf{B}_n(A)$ for $A=(0,1)\times S^1$ can be identified with $\mathcal{M}_n(D^2,\partial D^2)$ and a subgroup of $\mathcal{M}_n(\bar A,\partial \bar A)$ for $\bar A=[0,1]\times S^1$, respectively.
Then as mentioned before, $\Aut(\mathbf{B}_n(\mathbb{R}^2))\simeq\mathcal{M}^*_n(\mathbb{R}^2)$ and $\Aut(\mathbf{B}_n(A))/\tv(\mathbf{B}_n(A))\simeq\mathcal{M}^*_n(A)$.
However, these are different from $\mathcal{M}_n(D^2,\partial D^2)$, $\mathcal{M}_n(D^2,\partial D^2)/Z(\mathcal{M}_n(D^2,\partial D^2))$, $\mathcal{M}_n(\bar A,\partial \bar A)$ and $\mathcal{M}_n(\bar A,\partial \bar A)/Z(\mathcal{M}_n(\bar A,\partial \bar A))$.

This is the reason why we consider surfaces without boundary rather than compact surfaces, and we suggest that the reader {\em not} identify the braid groups $\mathbf{B}_n(\mathbb{R}^2)$ and $\mathbf{B}_n(A)$ with (subgroups of) mapping class groups as above.
\end{rmk}

\section{Braid groups and mapping class groups}\label{sec:braidAndMCG}

For convenience, we use $G_{/Z}$ to denote the quotient $G/Z(G)$ of $G$ by its center,
and use the shorthand notations
$S^2$, $\mathbb{R}^2$, $A$, and $T$ for $\Sigma_0$, $\Sigma_1, \Sigma_{0,2}$ and $\Sigma_1$, respectively.

We first take a look at some algebraic aspects of the pure and full braid groups.
To do this, we introduce the group presentation for $\mathbf{B}_{n}(\Sigma)$ due to Bellingeri and some well-known facts about braid groups. 
\begin{thm}\cite{Bel1}\label{thm:presentation}
The braid group $\mathbf{B}_{n}(\Sigma_{g,p})$ admits the following group presentation.
\begin{itemize}
\item The generators are $\sigma_1,\dots, \sigma_{n-1}, a_1,\dots, a_g, b_1,\dots, b_g, \zeta_1,\dots,\zeta_{p}$.
\item The defining relators are 
\begin{itemize}
\item [$\rm (BR_1)$] $\sigma_i\sigma_j=\sigma_j\sigma_i$ for $|i-j|>1$;
\item [$\rm (BR_2)$] $\sigma_i\sigma_j\sigma_i=\sigma_j\sigma_i\sigma_j$ for $|i-j|=1$;
\item [$\rm (CR_1)$] $[a_r, \sigma_i], [b_s, \sigma_i], [\zeta_t, \sigma_i]$ for $i>1$;
\item [$\rm (CR_2)$] $[a_r, \sigma_1 a_r\sigma_1], [b_r, \sigma_1 b_r\sigma_1], [\zeta_t, \sigma_1 \zeta_t\sigma_1]$ for $1\le r\le g, 1\le t\le p$;
\item [$\rm (CR_3)$] $[a_r, \sigma_1^{-1} a_s\sigma_1], [a_r, \sigma_1^{-1} b_s \sigma_1], [b_r, \sigma_1^{-1} a_s \sigma_1], [b_r, \sigma_1^{-1} b_s\sigma_1]$ for $r<s$, \\
$[a_r, \sigma_1^{-1}\zeta_u\sigma_1], [b_r,\sigma_1^{-1}\zeta_u\sigma_1]$ for $1\le r\le g, 1\le u\le p$,\\
$[\zeta_t,\sigma_1^{-1}\zeta_u\sigma_1]$ for $t<u$;
\item [$\rm (SCR)$] $\sigma_1 b_r\sigma_1 a_r \sigma_1=a_r\sigma_1 b_r$;
\item [$\rm (TR)$] $\left(\sigma_1\dots\sigma_{n-1}\sigma_{n-1}\dots\sigma_1\right)\left(\prod_{r=1}^g [b_r^{-1}, a_r]\right)\left( \prod_{t=1}^p\zeta_t\right)$,
\end{itemize}
where $[x,y]=x^{-1}y^{-1}xy$.
\end{itemize}
\end{thm}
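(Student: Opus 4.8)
The statement is a group presentation, so the proof splits into a surjection and an injectivity argument. For the first half, I would fix a concrete model of $\Sigma_{g,p}$: a $4g$-gon with the standard edge identifications producing the loops $a_r,b_r$, with $p$ open disks deleted producing the loops $\zeta_t$, and with the $n$ marked points $z_1,\dots,z_n$ placed inside a small embedded disk $D$. I realize $\sigma_i$ as the half-twist exchanging $z_i$ and $z_{i+1}$ supported in $D$, and $a_r,b_r,\zeta_t$ as the braids obtained by pushing $z_1$ (fixing $z_2,\dots,z_n$) once around the corresponding generator of $\pi_1(\Sigma,z_1)$. Checking that each listed relator holds is then a sequence of explicit isotopies of the configuration: $\mathrm{BR}_1,\mathrm{BR}_2$ are the classical braid relations inside $D$; $\mathrm{CR}_1$ records that a push of $z_1$ commutes with a half-twist disjoint from its support; $\mathrm{CR}_2,\mathrm{CR}_3$ and $\mathrm{SCR}$ encode what happens when the support of such a push is dragged across $z_2$ along a handle; and $\mathrm{TR}$ is the global relation expressing that the loop of $z_1$ around $z_2,\dots,z_n$ (the factor $\sigma_1\cdots\sigma_{n-1}\sigma_{n-1}\cdots\sigma_1$) together with the loop of $z_1$ around all handles and punctures bounds the whole surface and is therefore trivial. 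Since these elements are already known to generate $\mathbf{B}_n(\Sigma)$, this yields a surjection $\Phi$ from the abstractly presented group $\Gamma$ onto $\mathbf{B}_n(\Sigma)$.

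For injectivity I would argue by induction on $n$. When $n=1$ there are no $\sigma_i$, every relator involving some $\sigma_i$ disappears, and $\mathrm{TR}$ degenerates to $\left(\prod_r[b_r^{-1},a_r]\right)\left(\prod_t\zeta_t\right)=1$, which is exactly the standard one-relator presentation of $\pi_1(\Sigma_{g,p})=\mathbf{B}_1(\Sigma)$ (a closed surface group if $p=0$, and after eliminating $\zeta_p$ a free group of rank $2g+p-1$ if $p\ge 1$). For the inductive step I would first establish the companion presentation of the \emph{pure} braid group $\mathbf{P}_n(\Sigma)$ using the Fadell--Neuwirth fibration
\[
1\longrightarrow \pi_1\!\left(\Sigma\setminus\{z_1,\dots,z_{n-1}\},z_n\right)\longrightarrow \mathbf{P}_n(\Sigma)\xrightarrow{\;q_{n*}\;}\mathbf{P}_{n-1}(\Sigma)\longrightarrow 1,
\]
whose fibre is \emph{free} because $\Sigma$ minus at least one point is an open surface. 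Applying the standard presentation-of-an-extension lemma to a free kernel and the inductively known quotient produces a presentation of $\mathbf{P}_n(\Sigma)$ in which the new relators are precisely the conjugation rules describing the $\mathbf{P}_{n-1}(\Sigma)$-action on the free fibre.

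I would then pass from the pure to the full braid group through the short exact sequence $1\to\mathbf{P}_n(\Sigma)\to\mathbf{B}_n(\Sigma)\xrightarrow{\rho}\mathbf{S}_n\to1$, using the $\sigma_i$ as Coxeter lifts of the transpositions generating $\mathbf{S}_n$. The extension lemma then adds to the pure braid presentation the braid relations $\mathrm{BR}_1,\mathrm{BR}_2$ (lifting the Coxeter relations, which hold on the nose in $\mathbf{B}_n(\Sigma)$), expressions for the lifts of $s_i^2=1$ as explicit pure braids, and the conjugation of each pure generator by the $\sigma_i$. A sequence of Tietze transformations --- expressing the pure generators as words in the $\sigma_i$ and the pushed loops, and discarding redundant relators --- should collapse this to exactly the list $\mathrm{BR},\mathrm{CR},\mathrm{SCR},\mathrm{TR}$, furnishing $\Phi$ with an inverse and completing the induction.

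The main obstacle is controlling this bookkeeping: verifying that no relation is lost when the conjugation rules and $\mathrm{TR}$ are rewritten in the chosen generating set, i.e.\ that the listed relators genuinely generate the whole relation module rather than merely lying in it. Two points need separate care. First, $\mathrm{SCR}$ (together with $\mathrm{CR}_2,\mathrm{CR}_3$) is where the handle topology really enters, and matching it against the fibre action requires choosing the based loops carefully enough that the pushing map can be computed explicitly. Second, the sphere $S^2$ must be handled apart, since there the relevant configuration spaces are not aspherical and the Fadell--Neuwirth sequence fails to be short exact on $\pi_1$; there I would seed the induction from the low-complexity groups $\mathbf{B}_n(S^2)$ directly, after which $\mathrm{TR}$ reduces to the familiar sphere relation $\sigma_1\cdots\sigma_{n-1}\sigma_{n-1}\cdots\sigma_1=1$.
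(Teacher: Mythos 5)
This theorem is not proved in the paper: it is quoted from Bellingeri \cite{Bel1}, and the only argument the author supplies is the remark immediately following the statement, namely that his version differs from Bellingeri's published presentation by one extra generator $\zeta_p$ and one extra relator $\rm(TR)$, which can be introduced or cancelled by a single Tietze transformation because $\rm(TR)$ involves $\zeta_p$ exactly once. Your outline is therefore not comparable to anything in this paper; it is, however, essentially the strategy of the cited reference itself: realize the generators geometrically to get a surjection from the presented group, prove injectivity by induction on $n$ via the Fadell--Neuwirth fibration with free fibre, pass from $\mathbf{P}_n(\Sigma)$ to $\mathbf{B}_n(\Sigma)$ through the extension by $\mathbf{S}_n$, treat $S^2$ separately because asphericity fails, and then Tietze-collapse to the stated relator list. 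Two small cautions. First, in the extension step the presentation-of-an-extension lemma contributes not only the conjugation rules for the action on the free fibre but also one relator for each relator of the quotient, expressing its lift as a word in the kernel; with a free kernel these do not vanish automatically unless you exhibit a section (which exists when $p\ge1$ but needs an argument for closed $\Sigma$), so they must be carried through the bookkeeping rather than omitted. Second, your proof produces the presentation as stated here, with all $p$ generators $\zeta_t$ and the relator $\rm(TR)$, which is the \emph{modified} version; if you intend to invoke \cite{Bel1} for any intermediate step you will need the same one-line Tietze argument the paper uses to reconcile the two. With those points attended to, the approach is sound, though as you acknowledge the real content of the theorem lies in the relator-module bookkeeping that the sketch defers.
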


The presentation above is slightly different from Bellingeri's. Indeed, the presentation above has one more generator $\zeta_p$ and relator $\rm(TR)$ when $p\ge 1$. However, this pair can be cancelled out by the obvious Tits transformation since $\rm(TR)$ involves $\zeta_p$ only once.

\begin{cor}\label{cor:quotient}
If $p\ge 1$, then
\[
\mathbf{B}_{n}(\Sigma_{g,p})/\langle\!\langle\zeta_p\rangle\!\rangle\simeq\mathbf{B}_{n}(\Sigma_{g,p-1}).
\]
\end{cor}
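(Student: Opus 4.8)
The plan is to derive the presentation of the quotient directly from Bellingeri's presentation in Theorem~\ref{thm:presentation} and to recognize the result as the presentation of $\mathbf{B}_n(\Sigma_{g,p-1})$. Since quotienting a group by the normal closure of an element amounts to adjoining that element as a new defining relator, the group $\mathbf{B}_n(\Sigma_{g,p})/\langle\!\langle\zeta_p\rangle\!\rangle$ is presented by the generators of Theorem~\ref{thm:presentation} together with all the listed relators and the single extra relator $\zeta_p$.

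First I would apply a Tietze transformation to eliminate the generator $\zeta_p$: using the relator $\zeta_p$ to substitute the empty word for every occurrence of $\zeta_p$ in the remaining relators, and then deleting both the generator $\zeta_p$ and the relator $\zeta_p$ itself. The effect of this substitution must be tracked relator by relator. Every relator in $\mathrm{(CR_1)}$, $\mathrm{(CR_2)}$, and $\mathrm{(CR_3)}$ that involves $\zeta_p$ has it (or a conjugate $\sigma_1^{-1}\zeta_p\sigma_1$) sitting inside a commutator bracket; after setting $\zeta_p=1$ each such commutator reduces to the empty word and is deleted. The only relator in which $\zeta_p$ occurs outside a commutator is $\mathrm{(TR)}$, where it appears exactly once as the final factor of $\prod_{t=1}^p\zeta_t$, and removing it turns $\mathrm{(TR)}$ into precisely the relator $\mathrm{(TR)}$ for $p-1$ punctures.

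Having carried out these reductions, I would verify that the surviving data agree verbatim with the output of Theorem~\ref{thm:presentation} applied to $\Sigma_{g,p-1}$: the remaining generators are $\sigma_1,\dots,\sigma_{n-1},a_1,\dots,a_g,b_1,\dots,b_g,\zeta_1,\dots,\zeta_{p-1}$; the relators $\mathrm{(BR_1)}$, $\mathrm{(BR_2)}$, and $\mathrm{(SCR)}$ are untouched; and the index ranges in $\mathrm{(CR_1)}$–$\mathrm{(CR_3)}$ shrink exactly to $t,u\le p-1$, which is the correct range for $p-1$ punctures. This yields the desired isomorphism. The argument carries no genuine difficulty; the one point requiring care is to confirm that setting $\zeta_p=1$ imposes no new relation among the surviving generators. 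This is guaranteed precisely because $\zeta_p$ enters each relator of $\mathrm{(CR_1)}$–$\mathrm{(CR_3)}$ only inside a commutator bracket, so its deletion collapses those relators to triviality rather than to a constraint, and it enters $\mathrm{(TR)}$ linearly and only once, which is exactly the feature noted in the remark following Theorem~\ref{thm:presentation} that makes this Tits transformation available.
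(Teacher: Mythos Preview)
Your proof is correct and follows essentially the same approach as the paper: both argue directly from the presentation in Theorem~\ref{thm:presentation}, using a Tietze transformation to eliminate $\zeta_p$ and observing that the resulting presentation coincides with that of $\mathbf{B}_n(\Sigma_{g,p-1})$. Your version is simply more explicit in tracking the relators, whereas the paper handles $p\ge2$ in one line and singles out $p=1$ only to note that $\mathrm{(TR)}$ then has no $\zeta$'s and matches the closed-surface presentation.
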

\begin{proof}
This follows directly from the presentation for $\mathbf{B}_{n}(\Sigma_{g,p})$ for $p\ge 2$.

When $p=1$, then the left hand side has $\rm(TR)$ without any $\zeta$. This is exactly the same as the presentation for a braid group on a closed surface, which is the right hand side.
\end{proof}

For convenience, we denote $\chi(\Sigma)=2-2g-p$ by $\chi$ and $2g+p+n$ by $\kappa$, and so we have the equality $\chi+\kappa=n+2$. It is easy to check that if $\kappa\le3$, then $\mathbf{B}_{n}(\Sigma)$ is finite or abelian, as is $\mathbf{P}_{n}(\Sigma)$. Indeed, $\mathbf{P}_{n}(\Sigma)_{/Z}=1$ if and only if $\kappa\le3$.
Hence we exclude these cases, and assume $\kappa\ge 4$ throughout this paper.

\begin{lem}\label{lem:centerBraid}
Suppose $\kappa\ge 4$.
The following properties hold.
\begin{enumerate}
\item $Z(\mathbf{B}_{n}(\Sigma))=Z(\mathbf{P}_{n}(\Sigma))$; 
\item 
$Z\left(\mathbf{B}_n(S^2)\right)=\langle \Delta^2|\Delta^4\rangle$,
$Z\left(\mathbf{B}_{n}(\mathbb{R}^2)\right)=\langle \Delta^2\rangle$,
$Z\left(\mathbf{B}_{n}(A)\right)=\langle \Delta_{\zeta}\rangle$, and 
$Z\left(\mathbf{B}_n(T)\right)=\langle \Delta_a\rangle\oplus\langle \Delta_b\rangle$,
where
\begin{align*}
\Delta^2 =(\sigma_1\dots\sigma_{n-1})^n,&\quad
\Delta_{\zeta}=(\zeta_1\sigma_1\dots\sigma_{n-1})^n,\\
\Delta_a=(a_1\sigma_1\dots\sigma_{n-1})^n, &\quad \Delta_b=(b_1\sigma_1\dots\sigma_{n-1})^n;
\end{align*}
\item 
$Z(\mathbf{B}_n(\Sigma))=1$ if $\chi<0$; 
\item $\mathbf{P}_{n}(\Sigma)\simeq\mathbf{P}_{n}(\Sigma)_{/Z} 
\times Z(\mathbf{P}_{n}(\Sigma))$ if $\chi\ge 0$; 
\item If $\chi\ge 0$, then
$\mathbf{P}_n(\Sigma)_{/Z}$ is either
$\mathbf{P}_{\kappa-3}(\Sigma_{0,3})$ if $g=0$, or 
$\mathbf{P}_{\kappa-3}(\Sigma_{1,1})$ if $g=1$.
Hence $\mathbf{P}_{n}(\Sigma)_{/Z}$ is a free group $\mathbf{F}_2$ of rank $2$ if and only if $\kappa=4$.
\end{enumerate}
\end{lem}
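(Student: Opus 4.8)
The strategy is to reduce the whole lemma to a single structural input --- the centerlessness of braid groups on hyperbolic surfaces --- together with an explicit product decomposition of $F_n(\Sigma)$ for the four surfaces with $\chi\ge0$. Since the full twists $\Delta^2,\Delta_\zeta,\Delta_a,\Delta_b$ of (2) are visibly rigid rotations of an entire configuration, they are already central in $\mathbf{B}_n(\Sigma)$; so in (1) and (2) the only real task is to show the center contains nothing more.

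I would first establish (3) by descending the Fadell--Neuwirth tower
\[
1\to\pi_1(\Sigma_{g,p+n-1})\to\mathbf{P}_n(\Sigma)\to\mathbf{P}_{n-1}(\Sigma)\to1,
\]
whose fibers $\pi_1(\Sigma_{g,p+k})$ are nonabelian free, hence centerless, for $\chi<0$ and $\kappa\ge4$, with centerless base $\mathbf{P}_1(\Sigma)=\pi_1(\Sigma)$. If $z\in Z(\mathbf{P}_n(\Sigma))$, its image is central in $\mathbf{P}_{n-1}(\Sigma)$, hence trivial by induction, so $z$ lies in the free fiber; but $z$ also centralizes that fiber, forcing $z=1$. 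This yields $Z(\mathbf{P}_n(\Sigma))=1$, and I will reuse it to see that the factors $\mathbf{P}_{\kappa-3}(\Sigma_{0,3})$ and $\mathbf{P}_{\kappa-3}(\Sigma_{1,1})$ appearing below are centerless.

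For $\chi\ge0$ the surface is $S^2,\mathbb{R}^2,A$, or $T$, and I would use the identity component $G$ of its biholomorphism group, which for $\kappa\ge4$ acts freely and sharply $t$-transitively on $F_n(\Sigma)$ with $t=3,2,1,1$ respectively. Normalizing the first $t$ points to standard positions gives a homeomorphism
\[
F_n(\Sigma)\;\cong\;G\times F_{\kappa-3}(\Sigma'),\qquad \Sigma'=\begin{cases}\Sigma_{0,3}=\mathbb{C}\setminus\{0,1\}&g=0,\\ \Sigma_{1,1}&g=1,\end{cases}
\]
a genuine product because sharp $t$-transitivity makes the normalizing element of $G$ depend continuously on the configuration. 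Passing to fundamental groups gives $\mathbf{P}_n(\Sigma)\cong\pi_1(G)\times\mathbf{P}_{\kappa-3}(\Sigma')$, with $\pi_1(G)$ equal to $\mathbb{Z}_2,\mathbb{Z},\mathbb{Z},\mathbb{Z}^2$ for $S^2,\mathbb{R}^2,A,T$. Identifying the generators of $\pi_1(G)$ with the full twists (a $2\pi$-rotation of the plane or annulus realizes $\Delta^2,\Delta_\zeta$; translations around the two cycles of $T$ realize $\Delta_a,\Delta_b$; a $2\pi$-rotation of $S^2$ realizes $\Delta^2$, whose square is nullhomotopic, giving $\Delta^4=1$) and using that the second factor is centerless by (3), I read off (2), (4), and (5) at once.

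It remains to finish (1). For $n\ge3$ the triviality of $Z(\mathbf{S}_n)$ forces every central braid to be pure, so $Z(\mathbf{B}_n(\Sigma))\subseteq Z(\mathbf{P}_n(\Sigma))$, while the reverse inclusion is the centrality of the full twists. The one case demanding a genuine argument is $n=2$ (relevant for $A$, $T$, and hyperbolic $\Sigma$): if $z=\sigma_1u\in Z(\mathbf{B}_2(\Sigma))$ induced the transposition, then conjugation by $\sigma_1$ would agree on $\mathbf{P}_2(\Sigma)$ with the inner automorphism by the pure element $u^{-1}$; but conjugation by the half-twist $\sigma_1$ interchanges the two distinct forgetting kernels $\mathbf{U}_1,\mathbf{U}_2$, whereas an inner automorphism by a pure braid preserves each of them, a contradiction. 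This last point --- pinning down which elements can be central once strands are allowed to permute --- is the only place the argument is not a formal consequence of the product decomposition, and is where I would be most careful.
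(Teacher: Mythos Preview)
Your argument is correct and in fact considerably more detailed than the paper's own proof, which for (1)--(3) simply cites Birman's book and Paris--Rolfsen, and for (4)--(5) says they ``follow directly from the group presentations'' (citing \cite{BGG} for the torus case). So the comparison is between a geometric/topological argument and an algebraic/presentation-based one.

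The main difference is in how you obtain the splitting $\mathbf{P}_n(\Sigma)\cong Z\times\mathbf{P}_{\kappa-3}(\Sigma')$. The paper's route (via \cite{BGG}) works inside the presentation: one exhibits the central elements and checks that killing them yields the presentation of $\mathbf{P}_{\kappa-3}(\Sigma')$, with the splitting read off from the relators. Your route instead trivializes $F_n(\Sigma)$ as $G\times F_{\kappa-3}(\Sigma')$ using the sharply $t$-transitive action of the identity component $G$ of $\operatorname{Aut}(\Sigma)$, then reads everything off from $\pi_1$. This is cleaner conceptually --- the product structure, the identification of the center with $\pi_1(G)$, and the description of the quotient all come for free once the homeomorphism is written down --- and it makes the appearance of $\mathbb{Z}_2$, $\mathbb{Z}$, $\mathbb{Z}$, $\mathbb{Z}^2$ for $S^2,\mathbb{R}^2,A,T$ transparent. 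The presentation approach has the compensating advantage of giving the explicit words $\Delta^2,\Delta_\zeta,\Delta_a,\Delta_b$ without having to chase the identification of $\pi_1(G)$ through the trivialization.

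One small point to keep in mind: your $n=2$ argument for (1) uses $\mathbf{U}_1\neq\mathbf{U}_2$, which in the paper is Lemma~\ref{lem:normalsubgroup}(2), stated \emph{after} the present lemma. This is not circular --- for the relevant cases ($\chi\le 0$, the only ones with $n=2$ and $\kappa\ge4$) the inequality follows directly from the fact that $q_{1*}(\zeta_{2,1})$ or $q_{1*}(a_{2,1})$ is nontrivial in $\pi_1(\Sigma)$ --- but you should make this independence explicit to avoid the appearance of a forward reference.
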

\begin{proof}
For (1), (2), (3) See \cite{Bir} and \cite[Proposition~1.6, \S~4]{PR1}.

Items (4) and (5) follow directly from the group presentations. See \cite[Lemma~17]{BGG} for the $g=1$ case.
\end{proof}

We consider the map $q_i:F_{n}(\Sigma)\to F_{n-1}(\Sigma)$ forgetting the $i$-th coordinate.
Then $q_i$ gives a fiber bundle structure with fiber $\Sigma_{\hat i}= \Sigma\setminus\mathbf{z} \cup \{z_i\}$.
Let $\mathbf{U}_i=\ker({q_i}_*)$. Then from the homotopy sequence, $\mathbf{U}_i = \pi_1(\Sigma_{\hat i}, z_i)$
and therefore is a free group of rank $1-\chi(\Sigma_{\hat i})=\kappa-2.$
We choose a generating set $X_i$ for $\mathbf{U}_i$ as
\[
X_i=\{A_{i,j}, \zeta_{i,t}, a_{i,r}, b_{i,r}|j\neq i\},
\]
where for $1\le i<j\le n$,
\[
A_{i,j}=A_{j,i}=\sigma_i^{-1}\dots\sigma_{j-2}^{-1}\sigma_{j-1}^2\sigma_{j-2}\dots\sigma_i
\]
and for $1\le i\le n$, $1\le g\le r$ and $1\le t\le p$,
\[
a_{i,r}=\delta_i^{-1}a_r\delta_i,\quad
b_{i,r}=\delta_i^{-1}b_r\delta_i,\quad
\zeta_{i,t}=\delta_i^{-1}\zeta_t\delta_i,
\]
where $\delta_i=(\sigma_1\dots\sigma_{i-1})$.

Then the conjugation of $\rm(TR)$ by $\delta_i$ gives
\begin{equation}
\label{eqn:ptr}
\tag{PTR} (A_{i,i+1}\dots A_{i,n} A_{1,i}\dots A_{i-1,i})\left(\prod_r [b_{i,r}^{-1}, a_{i,r}]\right)\left(\prod_t \zeta_{i,t}\right)=e,
\end{equation}
which involves only the generators in $X_i$. Therefore $X_i$ satisfies $\rm(PTR)$ and indeed, it is the only relator for $\mathbf{U}_i$.
This follows since $\mathbf{U}_i$ has the Hopfian property.

On the other hand, it is easy to see that 
\[
A_{i,i+1}=(\delta_{i+1}\delta_i)^{-1}\sigma_1^2(\delta_{i+1}\delta_i),\quad[\sigma_1a_{i,r}\sigma_1^{-1},\delta_{i+1}]=[a_{i+1,r},\delta_i]=e.
\]

Since the relator ${\rm(SCR)}$ can be written as $\sigma_1^2=[b_r,\sigma_1^{-1}a_r^{-1}\sigma_1]$, or equivalently, $\sigma_1^2=[\sigma_1b_r\sigma_1^{-1},a_r^{-1}]$, we conjugate these by $(\delta_{i+1}\delta_i)$ to obtain
\begin{align*}
A_{i,i+1}=(\delta_{i+1}\delta_i)^{-1}\sigma_1^2(\delta_{i+1}\delta_i)&=(\delta_{i+1}\delta_i)^{-1}[b_r,\sigma_1^{-1}a_r^{-1}\sigma_1](\delta_{i+1}\delta_i)\\
&=[b_{i+1,r}, A_{i,i+1}^{-1} a_{i,r}^{-1} A_{i,i+1}]\tag{$\rm PSCR_1$}\\
&=(\delta_{i+1}\delta_i)^{-1}[\sigma_1b_r\sigma_1^{-1},a_r^{-1}](\delta_{i+1}\delta_i)\\
&=[b_{i,r},a_{i+1,r}^{-1}]\tag{$\rm PSCR_2$}.
\end{align*}

Therefore in the union $X_i\cup X_{i+1}$ the following relation is satisfied.
\begin{equation}
\label{eqn:pscr}
\tag{PSCR} A_{i,i+1}=[b_{i,r},a_{i+1,r}^{-1}]=[b_{i+1,r}, A_{i,i+1}^{-1} a_{i,r}^{-1} A_{i,i+1}]
\end{equation}

Recall the map $q_{i*}:\mathbf{P}_{n}(\Sigma)\to\mathbf{P}_{n-1}(\Sigma)$, which is surjective. 
By using the generating set $X_i$ for $\mathbf{U}_i=\ker(q_{i*})$ and induction on $n$, we can find a generating set for $\mathbf{P}_{n}(\Sigma)$ even though $q_{i*}$ does not split in general.
Indeed, the set $X=\bigcup_i X_i$ generates $\mathbf{P}_{n}(\Sigma)$ and satisfies $\rm(PSCR)$ and $\rm(PTR)$.
The complete presentation for $\mathbf{P}_{n}(\Sigma)$ can be found in \cite{Bel1} as well.

The {\em centralizer $C_G(H)$} of $H$ in $G$ is defined as
\[C_G(H)=\{x\in G| xy=yx \text{ for all }y\in H\}.\]

\begin{lem}\label{lem:normalsubgroup}
Let $\mathbf{U}_i$ be the normal subgroup of $\mathbf{P}_{n}(\Sigma)$ defined as above. Then the following properties holds: 
\begin{enumerate}
\item $\mathbf{U}_i\cap Z(\mathbf{P}_{n}(\Sigma))=1$;
\item $\mathbf{U}_i\neq\mathbf{U}_j$ in $\mathbf{P}_{n}(\Sigma)$ for $i\neq j$;
\item In $\mathbf{P}_{n}(\Sigma)_{/Z}$, all $\mathbf{U}_i$'s are either identical if $\kappa=4$ or different if $\kappa\ge 5$; and
\item $C_{\mathbf{P}_{n}(\Sigma)}(\mathbf{U}_i)=Z(\mathbf{P}_{n}(\Sigma))$ and $C_{\mathbf{P}_{n}(\Sigma)_{/Z}}(\mathbf{U}_i)=1$.
\end{enumerate}
\end{lem}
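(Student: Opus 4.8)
The plan is to run everything off two features of the fiber subgroups $\mathbf{U}_i$: that $\mathbf{U}_i=\pi_1(\Sigma_{\hat i})$ is a free group of rank $\kappa-2\ge 2$ (so $Z(\mathbf{U}_i)=1$), and that the conjugation action of $\mathbf{P}_n(\Sigma)$ on $\mathbf{U}_i$ is the point-pushing monodromy of the bundle $q_i$, which I can analyze via the Birman sequence (\ref{eqn:braidMCG}) and Dehn--Nielsen--Baer (Theorem~\ref{thm:peripheral}). Statement (1) is then immediate: any $z\in\mathbf{U}_i\cap Z(\mathbf{P}_n(\Sigma))$ commutes with all of $\mathbf{U}_i$, so $z\in Z(\mathbf{U}_i)=1$. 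For statement (2) I would exhibit, for each $i\ne j$, an element of $\mathbf{U}_i$ outside $\mathbf{U}_j$. When $n\ge 3$ take $A_{i,k}$ with $k\notin\{i,j\}$: it lies in $\mathbf{U}_i$, while $q_{j*}(A_{i,k})=A_{i,k}\ne e$ in $\mathbf{P}_{n-1}(\Sigma)$ shows $A_{i,k}\notin\ker q_{j*}=\mathbf{U}_j$. When $n=2$ there is no third strand, and since $\kappa\ge 4$ forces $g\ge 1$ or $p\ge 2$, I would instead use a surface generator $a_{i,r}$ (resp.\ $\zeta_{i,t}$), whose $q_{j*}$-image $a_r$ (resp.\ $\zeta_t$) is nontrivial in $\pi_1(\Sigma)$.

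The heart is statement (4). Set $C=C_{\mathbf{P}_n(\Sigma)}(\mathbf{U}_i)$; it is normal (centralizer of a normal subgroup), contains $Z(\mathbf{P}_n(\Sigma))$, and satisfies $C\cap\mathbf{U}_i=Z(\mathbf{U}_i)=1$. The conjugation homomorphism $\Phi\colon\mathbf{P}_n(\Sigma)\to\Aut(\mathbf{U}_i)$ has kernel $C$ and restricts to an isomorphism $\mathbf{U}_i\xrightarrow{\sim}\Inn(\mathbf{U}_i)$, so passing to $\Out(\mathbf{U}_i)$ kills $\mathbf{U}_i$ and factors through an outer monodromy $\bar\Phi\colon\mathbf{P}_{n-1}(\Sigma)\to\Out(\mathbf{U}_i)$. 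This $\bar\Phi$ can be identified with the point-pushing map $\mathbf{P}_{n-1}(\Sigma)\to\mathcal{M}(\Sigma_{\hat i})$ followed by the embedding $\mathcal{M}(\Sigma_{\hat i})\hookrightarrow\Out(\pi_1(\Sigma_{\hat i}))$; since $\Sigma_{\hat i}$ is hyperbolic ($\chi(\Sigma_{\hat i})=3-\kappa\le-1$) the latter is injective by Theorem~\ref{thm:peripheral}, and the pure analogue of (\ref{eqn:braidMCG}) gives kernel $Z(\mathbf{P}_{n-1}(\Sigma))$, whence $\ker\bar\Phi=Z(\mathbf{P}_{n-1}(\Sigma))$. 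Therefore $\Phi^{-1}(\Inn\mathbf{U}_i)=q_{i*}^{-1}(Z(\mathbf{P}_{n-1}(\Sigma)))=C\cdot\mathbf{U}_i=C\times\mathbf{U}_i$, and applying $q_{i*}$ shows $q_{i*}$ restricts to an isomorphism $C\xrightarrow{\sim}Z(\mathbf{P}_{n-1}(\Sigma))$. Finally, forgetting a strand sends the central generators of Lemma~\ref{lem:centerBraid}(2) (the global twists $\Delta^2,\Delta_\zeta,\Delta_a,\Delta_b$) to the corresponding generators on $n-1$ strands, so $q_{i*}(Z(\mathbf{P}_n(\Sigma)))=Z(\mathbf{P}_{n-1}(\Sigma))=q_{i*}(C)$; injectivity of $q_{i*}$ on $C\supseteq Z(\mathbf{P}_n(\Sigma))$ then gives $C=Z(\mathbf{P}_n(\Sigma))$. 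The second equality of (4) follows, trivially when $\chi<0$ (where $Z=1$) and, when $\chi\ge 0$, from the splitting $\mathbf{P}_n(\Sigma)=\mathbf{P}_n(\Sigma)_{/Z}\times Z$ of Lemma~\ref{lem:centerBraid}(4), under which $C_{\mathbf{P}_n(\Sigma)}(\mathbf{U}_i)=C_{\mathbf{P}_n(\Sigma)_{/Z}}(\mathbf{U}_i)\times Z$.

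For statement (3) I would split on $\kappa$. When $\kappa=4$, Lemma~\ref{lem:centerBraid}(5) gives $\mathbf{P}_n(\Sigma)_{/Z}\cong\mathbf{F}_2$, and the image of $\mathbf{U}_i$ is a nontrivial finitely generated normal subgroup of rank $\kappa-2=2$; such a subgroup of a free group has finite index, and the Nielsen--Schreier index--rank formula forces the index to be $1$, so each $\mathbf{U}_i$ maps onto all of $\mathbf{P}_n(\Sigma)_{/Z}$ and they coincide. When $\kappa\ge 5$ and $\chi<0$ we have $Z=1$ and the claim is exactly (2); when $\kappa\ge 5$ and $\chi\ge 0$ (so $n\ge 3$), the element $A_{i,k}$ with $k\notin\{i,j\}$ again separates, since $A_{i,k}\in\mathbf{U}_j\cdot Z$ would place the non-central element $A_{i,k}$ into $Z(\mathbf{P}_{n-1}(\Sigma))$ after applying $q_{j*}$.

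The main obstacle is the monodromy computation in (4), namely the identifications $\ker\bar\Phi=Z(\mathbf{P}_{n-1}(\Sigma))$ and $q_{i*}(Z(\mathbf{P}_n(\Sigma)))=Z(\mathbf{P}_{n-1}(\Sigma))$. Both must be verified through the degenerate surfaces $S^2,\mathbb{R}^2,A,T$, where the Birman sequence nearly collapses (for $T$ the point-pushing map is trivial and $\mathbf{P}_{n-1}(\Sigma)$ may be abelian); here one leans on the explicit centers and the splitting of Lemma~\ref{lem:centerBraid} rather than on any genericity hypothesis.
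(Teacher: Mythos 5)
Your proposal is correct, and parts (1)--(3) run essentially parallel to the paper's proof (your (1) uses $Z(\mathbf{U}_i)=1$ instead of pushing the center forward under $q_{i*}$, and your Nielsen--Schreier argument for the $\kappa=4$ case of (3) actually fills in a step the paper leaves implicit). The real divergence is in (4). The paper argues by induction on $n$: given $\alpha$ centralizing $\mathbf{U}_i$, it applies $q_{j*}$ for $j\neq i$, uses the inductive hypothesis to write $\alpha=z\alpha'$ with $z$ central and $\alpha'\in\mathbf{U}_j$, and then exploits the freeness of $\mathbf{U}_j$ together with the common element $A_{i,j}\in\mathbf{U}_i\cap\mathbf{U}_j$ to force $\alpha'=1$; this is entirely combinatorial, needing only the structure of the $\mathbf{U}_i$'s and the surjectivity of $q_{j*}$ on centers. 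You instead identify the outer conjugation action $\mathbf{P}_{n-1}(\Sigma)\to\Out(\mathbf{U}_i)$ with the bundle monodromy, i.e.\ the composite of $Push$ with the embedding $\mathcal{M}(\Sigma_{\hat i})\hookrightarrow\Out(\pi_1(\Sigma_{\hat i}))$, and read off $\ker\bar\Phi=Z(\mathbf{P}_{n-1}(\Sigma))$ from the Birman sequence; this avoids induction and is conceptually transparent, but it leans on heavier inputs (exactness of (\ref{eqn:braidMCG}) including the degenerate surfaces, faithfulness of the $\Out$-representation of $\mathcal{M}(\Sigma_{\hat i})$, and the standard monodromy identification), all of which are available in or around Section~2 of the paper, and your closing paragraph rightly flags that the degenerate cases $S^2,\mathbb{R}^2,A,T$ must be checked against Lemma~\ref{lem:centerBraid} --- they do check out. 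One small citation correction: the injectivity of $\mathcal{M}(\Sigma_{\hat i})\to\Out(\pi_1(\Sigma_{\hat i}))$ is not Theorem~\ref{thm:peripheral} (which is the realization/surjectivity half of Dehn--Nielsen--Baer) but the homotopy-implies-isotopy statement the paper invokes via \cite[Theorem~1.9]{FM} in the proof of Lemma~\ref{lem:kernel}.
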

\begin{rmk}
When $\mathbf{P}_n(\Sigma)$ is centerless and $\kappa=4$, then (2) and (3) seem to contradict each other.
However, it only happens when $n=1$ and the Euler characteristic $\chi(\Sigma)=-1$, and therefore (2) and (3) are vacuous.
\end{rmk}
\begin{proof}
\noindent(1) Since any nontrivial element in $Z(\mathbf{P}_{n}(\Sigma))$ maps to a nontrivial element in $Z(\mathbf{P}_{n-1}(\Sigma))$ via $q_{i*}$ by Lemma~\ref{lem:centerBraid}, this is obvious.

\noindent(2) If $\chi\le 0$, then one of $q_{i*}(\zeta_{j,1})$ or $q_{i*}(a_{j,1})$ is nontrivial and so $\mathbf{U}_i$ misses either $\zeta_{j,1}$ or $a_{j,1}$ in $\mathbf{U}_j$. 
If $\chi>0$, then $n\ge3$ since $\kappa\ge 4$, and there exists $1\le k\le n$ different from $i$ and $j$. Then $q_{i*}(A_{j,k})$ is nontrivial and so $\mathbf{U}_i$ misses $A_{j,k}\in\mathbf{U}_j$.

\noindent(3) If $\chi<0$, then it is obvious since the center is trivial, and so we assume that $\chi\ge 0$. 
If $\kappa=4$, then 
by Lemma~\ref{lem:centerBraid}~(5), $\mathbf{P}_{n}(\Sigma)_{/Z}$ is a free group of rank 2 which is isomorphic to $\mathbf{U}_i$. 
Otherwise, one can regard $\{z_i,z_j\}$ as a part of a basepoint for $\mathbf{P}_{n}(\Sigma)_{/Z}$.
Then $\mathbf{U}_i$ is the kernel of $q_{i*}:\mathbf{P}_{n}(\Sigma)_{/Z}\to\mathbf{P}_{n-1}(\Sigma)_{/Z}$.
Hence by (2), $\mathbf{U}_i\neq\mathbf{U}_j$.

\noindent(4) We use induction on $n$. If $n=1$, then $\mathbf{P}_{1}(\Sigma)=\mathbf{U}_1$ and so $C_{\mathbf{P}_{1}(\Sigma)}(\mathbf{U}_1)=Z(\mathbf{P}_{1}(\Sigma))$.

Suppose the statement holds for $n$ and let $\alpha\in C_{\mathbf{P}_{n+1}(\Sigma)}(\mathbf{U}_i)$.
Then by taking $q_{j*}$ for $j\neq i$, we have
$q_{j*}(\alpha)\in C_{\mathbf{P}_{n}(\Sigma)}(q_{j*}(\mathbf{U}_i))=C_{\mathbf{P}_{n}(\Sigma)}(\mathbf{U}_i)=Z(\mathbf{P}_{n}(\mathbf{U}_i))$ by the induction hypothesis.
Since $q_{j*}$ maps $Z(\mathbf{P}_{n+1}(\Sigma))$ onto $Z(\mathbf{P}_{n}(\Sigma))$, there exists $z\in Z(\mathbf{P}_{n+1}(\mathbf{U}_j))$ and $\alpha'\in\mathbf{U}_j$ so that $\alpha = z \alpha'$.

However, if we take $\beta=A_{i,j}\in\mathbf{U}_i\cap\mathbf{U}_j$, then the commutativity of $\alpha'$ and $\beta$ implies that $\alpha'$ must be a power of $\beta$ by the freeness of $\mathbf{U}_j$, and so contained in $\mathbf{U}_i$.
This implies that $\alpha'\in Z(\mathbf{U}_i)=1$ as well. Hence $\alpha\in Z(\mathbf{P}_{n+1}(\Sigma))$.

The second assertion is obvious since $\mathbf{P}_{n}(\Sigma)_{/Z}$ is isomorphic to a pure braid group on a surface with $\chi<0$ which is centerless by Lemma~\ref{lem:centerBraid}~(3).
\end{proof}

We introduce another remarkable result due to Goldberg as follows.
\begin{thm}\label{thm:Goldberg}\cite{Go} Suppose $\Sigma\neq S^2$.
Then the map $\iota_*:\mathbf{P}_{n}(\Sigma)\to\prod^n_{i=1} \pi_1(\Sigma, z_i)$ induced by the embedding $\iota:F_{n}(\Sigma)\to\prod^n\Sigma$ is surjective and
\[
\ker \iota_* = \langle\!\langle  A_{i,j}|1\le i<j\le n \rangle\!\rangle,
\]
which is the normal closure generated by $A_{i,j}$'s.
\end{thm}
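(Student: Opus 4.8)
The plan is to prove both assertions at once by analyzing the quotient $\mathbf{P}_n(\Sigma)/N$, where $N=\langle\!\langle A_{i,j}\mid 1\le i<j\le n\rangle\!\rangle$ is the normal closure of the $A_{i,j}$'s, and identifying it with the target $\prod_{i=1}^n\pi_1(\Sigma,z_i)$ through the homomorphism induced by $\iota_*$. First I would dispose of the inclusion $N\subseteq\ker\iota_*$, which is purely geometric. The generator $A_{i,j}$ is represented by a braid in which only the $i$-th strand moves, tracing a small loop that encircles the base point $z_j$ once and returns. Under the projection $\iota$ to the $i$-th factor $\Sigma$ this loop bounds an embedded disc, since $z_j$ is an interior point of $\Sigma$ rather than a puncture, while all other coordinates stay constant; hence $\iota_*(A_{i,j})=1$. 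As $\ker\iota_*$ is normal, $N\subseteq\ker\iota_*$ follows.

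For the reverse inclusion together with surjectivity, I would pass to the induced homomorphism $\bar\iota_*\colon\mathbf{P}_n(\Sigma)/N\to\prod_{i=1}^n\pi_1(\Sigma,z_i)$ and show it is an isomorphism; then $\ker\iota_*=N$ and $\iota_*$ is onto. To do this I would start from the presentation of $\mathbf{P}_n(\Sigma)$ built on the generating set $X=\bigcup_i X_i$ with relators $\mathrm{(PTR)}$, $\mathrm{(PSCR)}$ and the commuting relations inherited from Theorem~\ref{thm:presentation}, and read off a presentation of the quotient by adjoining the relators $A_{i,j}=1$. Setting every $A_{i,j}$ to the identity, the surviving generators are exactly $a_{i,r},b_{i,r},\zeta_{i,t}$ for $1\le i\le n$; the relator $\mathrm{(PTR)}$ indexed by $i$ collapses to the single relation $\prod_r[b_{i,r}^{-1},a_{i,r}]\prod_t\zeta_{i,t}=1$, which is precisely the defining relation of $\pi_1(\Sigma,z_i)$; and the relator $\mathrm{(PSCR)}$, after killing $A_{i,i+1}$, forces $[b_{i,r},a_{i+1,r}^{-1}]=1$, while the remaining conjugating relations $\mathrm{(CR_1)}$--$\mathrm{(CR_3)}$ become commuting relations between generators carrying distinct strand indices. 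The upshot is the standard presentation of the direct product $\prod_{i=1}^n\langle a_{i,r},b_{i,r},\zeta_{i,t}\mid \text{surface relation}\rangle\cong\prod_{i=1}^n\pi_1(\Sigma,z_i)$, and under this identification $\bar\iota_*$ sends each generator to the corresponding generator of the appropriate factor, so it is an isomorphism.

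The technical heart, and the step I expect to be most delicate, is the verification in the previous paragraph that after imposing $A_{i,j}=1$ every mixed-index relation of $\mathbf{P}_n(\Sigma)$ degenerates precisely to a commutator relation between the two factors, that the one relator $\mathrm{(PTR)}$ per index is exactly the surface relation and nothing more, and that no additional relation survives. I would organize this bookkeeping by grouping the relators of Theorem~\ref{thm:presentation} according to which strand indices they involve, treating same-index and mixed-index relators separately, and invoking the Hopfian argument already used to identify the sole relator of $\mathbf{U}_i$ to confirm that the listed relators are complete. This is also where the hypothesis $\Sigma\neq S^2$ enters: it guarantees $g\ge1$ or $p\ge1$, so that each $\pi_1(\Sigma,z_i)$ is a nontrivial one-relator surface group and the factors are identified honestly; for $S^2$ the only generators are the $A_{i,j}$ themselves, the relator $\mathrm{(TR)}$ becomes a nontrivial relation among the $\sigma_i$ alone, and the clean direct-product description degenerates.
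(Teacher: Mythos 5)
The paper itself gives no proof of this statement: it is imported verbatim from Goldberg \cite{Go}, whose argument is an induction on $n$ comparing the Fadell--Neuwirth sequence $1\to\mathbf{U}_n\to\mathbf{P}_n(\Sigma)\to\mathbf{P}_{n-1}(\Sigma)\to 1$ with the product sequence $1\to\pi_1(\Sigma,z_n)\to\prod^n\pi_1\to\prod^{n-1}\pi_1\to 1$, using that the inclusion-induced map $\mathbf{U}_n=\pi_1(\Sigma\setminus\{z_1,\dots,z_{n-1}\},z_n)\to\pi_1(\Sigma,z_n)$ is onto with kernel normally generated by the loops $A_{j,n}$ around the deleted points. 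Your presentation-theoretic route is genuinely different; your first step ($N\subseteq\ker\iota_*$ because the $i$-th strand of $A_{i,j}$ bounds a disc once $z_j$ is filled back in) is correct, and the overall strategy of showing that $\mathbf{P}_n(\Sigma)/N$ carries the standard direct-product presentation is sound. What the inductive route buys is that one never needs the full relator list of $\mathbf{P}_n(\Sigma)$, only the effect of filling in punctures on a free group, which is elementary.

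That said, the step you defer is not routine bookkeeping but is exactly where the theorem lives: you must verify that \emph{every} mixed-index relator of the complete presentation of $\mathbf{P}_n(\Sigma)$ from \cite{Bel1} collapses modulo $N$ to a cross-factor commutation with no residue. The data reproduced in Section~2 of the paper do not suffice for this: $\rm(PSCR)$ only gives $[b_{i,r},a_{i+1,r}]\in N$ for adjacent indices and equal $r$, and $\rm(CR_1)$--$\rm(CR_3)$ are relations in $\mathbf{B}_n(\Sigma)$ involving the $\sigma_i$, which must be conjugated into $\mathbf{P}_n(\Sigma)$ and checked case by case (for instance $\rm(CR_2)$ becomes $[a_{1,r},A_{1,2}a_{2,r}]=1$, which only modulo $N$ yields $[a_{1,r},a_{2,r}]=1$). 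Moreover, the Hopfian argument you invoke is not the one used for $\mathbf{U}_i$; the correct form here is that the verified relations produce a surjection $\prod_i\pi_1(\Sigma,z_i)\twoheadrightarrow \mathbf{P}_n(\Sigma)/N$ whose composite with $\bar\iota_*$ is a surjective endomorphism of the residually finite, hence Hopfian, group $\prod_i\pi_1(\Sigma,z_i)$, forcing $\bar\iota_*$ to be injective. Finally, your stated reason for excluding $S^2$ is off: for $\Sigma_{0,1}=\mathbb{R}^2$ the factors $\pi_1(\Sigma,z_i)$ are trivial rather than ``nontrivial one-relator surface groups,'' yet the theorem holds there, so nontriviality of the factors is not what the hypothesis is securing.
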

Roughly speaking, all the interference between strands in $\mathbf{P}_{n}(\Sigma)$ come from the $A_{i,j}$'s.

Now we review the relationship between braid groups and mapping class groups.
For convenience, we denote by $fg$ for the composition $f\circ g$ of two mapping classes or automorphisms $f$ and $g$, and apply group elements from left to right. That is, $(fg)(x)=g(f(x))$.

Let us consider the map $\mu\times\rho:\mathcal{M}^*_{n}(\Sigma)\to\mathbf{S}_p\times\mathbf{S}_n$ defined by
\[
\mu(f)(t)=u\Longleftrightarrow f(p_t)=p_u,\quad
\rho(f)(i)=j\Longleftrightarrow f(z_i)=z_j.
\]
We denote the kernel $\ker(\rho)$ by $\mathcal{PM}^*_{n}(\Sigma)$. 

Recall $Push:\mathbf{B}_{n}(\Sigma)_{/Z}\to\mathcal{M}^*_{n}(\Sigma)$ in the exact sequence of Birman.
Then it is obvious that $\rho(\beta)=\rho(Push(\beta))$, and so 
$Push|:\mathbf{P}_{n}(\Sigma)_{/Z}\to\mathcal{PM}^*_{n}(\Sigma)$ is well-defined.
Hence we may regard $\mathbf{B}_{n}(\Sigma)_{/Z}$ and $\mathbf{P}_{n}(\Sigma)_{/Z}$ as normal subgroups of $\mathcal{M}^*_{n}(\Sigma)$ and $\mathcal{PM}^*_{n}(\Sigma)$ via $Push$ and $Push|$, respectively. 
According to the context, we may regard the domains of $Push$ and $Push|$ as $\mathbf{B}_{n}(\Sigma)$ and $\mathbf{P}_{n}(\Sigma)$, respectively, by the pre-compositions of the canonical quotient map by their centers.

On the other hand, any mapping class $f\in\mathcal{M}^*_{n}(\Sigma)$ induces self-homeomorphisms on both ordered and unordered configuration spaces $F_n(\Sigma)$ and $F_n(\Sigma)/\mathbf{S}_n$. Therefore they induce automorphisms $f_*$ on $\mathbf{B}_{n}(\Sigma)$ and $\mathbf{P}_{n}(\Sigma)$ simultaneously, and so 
there are two maps 
\[
(\cdot)_*:\mathcal{M}^*_{n}(\Sigma)\to\Aut(\mathbf{B}_{n}(\Sigma)),\quad
(\cdot)_*|:\mathcal{M}^*_{n}(\Sigma)\to\Aut(\mathbf{P}_{n}(\Sigma)).
\]
\begin{rmk}
For $f\in\mathcal{M}^*_n(\Sigma)$ and $\beta\in\mathbf{B}_n(\Sigma)$, the mapping class $Push(f_*(\beta))$ is nothing but $f^{-1}Push(\beta) f$.
However, $f_*(\beta)$ is not a conjugate of $\beta$ in general.
Indeed, for a torus or higher genus surface, Dehn twists along non-separating curves never act by conjugacy on the braid group.
\end{rmk}

\begin{lem}\label{lem:kernel}
The maps $(\cdot)_*$ and $(\cdot)_*|$ are injective.
\end{lem}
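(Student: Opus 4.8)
The plan is to establish the stronger assertion that $(\cdot)_*|$ is injective; injectivity of $(\cdot)_*$ is then immediate, since if $f_*=\mathrm{id}$ on $\mathbf{B}_n(\Sigma)$ then a fortiori $f_*|=\mathrm{id}$ on the subgroup $\mathbf{P}_n(\Sigma)$. So suppose $f\in\mathcal{M}^*_n(\Sigma)$ satisfies $f_*|=\mathrm{id}_{\mathbf{P}_n(\Sigma)}$, and first I would pin down the permutation $\rho(f)$. A homeomorphism realising $f$ relabels the strands by $\rho(f)$, so the induced automorphism permutes the family of subgroups $\mathbf{U}_1,\dots,\mathbf{U}_n$ accordingly, sending $\mathbf{U}_i$ to $\mathbf{U}_{\rho(f)(i)}$. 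As $f_*|=\mathrm{id}$ fixes each $\mathbf{U}_i$, we get $\mathbf{U}_i=\mathbf{U}_{\rho(f)(i)}$ for all $i$, and since the $\mathbf{U}_i$ are pairwise distinct by Lemma~\ref{lem:normalsubgroup}(2) this forces $\rho(f)=\mathrm{id}$. Hence $f$ fixes every $z_i$.

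Next, fix a single index $i$. Because $f$ fixes $z_i$ and preserves the remaining punctures, it restricts to a self-homeomorphism of the fiber $\Sigma_{\hat i}=\Sigma\setminus(\mathbf{z}\setminus\{z_i\})$ of $q_i$ that fixes the basepoint $z_i$, and this restriction induces exactly the restriction of $f_*|$ to $\mathbf{U}_i=\pi_1(\Sigma_{\hat i},z_i)$. By hypothesis this action is trivial. Now the standing assumption $\kappa\ge4$ gives $\chi(\Sigma_{\hat i})=3-\kappa\le -1$, so $\pi_1(\Sigma_{\hat i},z_i)$ is free of rank $\kappa-2\ge2$ and the Dehn--Nielsen--Baer faithfulness underlying Theorem~\ref{thm:peripheral} applies: a mapping class of $(\Sigma_{\hat i},z_i)$ acting trivially on $\pi_1(\Sigma_{\hat i},z_i)$ is trivial in $\mathcal{M}^*_1(\Sigma_{\hat i})$. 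It remains to transport this back to $\mathcal{M}^*_n(\Sigma)$: since $\Sigma_{\hat i}\setminus\{z_i\}=\Sigma\setminus\mathbf{z}$, and an isotopy rel $z_i$ and the punctures of $\Sigma_{\hat i}$ fixes every point of $\mathbf{z}$, triviality in $\mathcal{M}^*_1(\Sigma_{\hat i})$ is the same as triviality in $\mathcal{PM}^*_n(\Sigma)$. As $\rho(f)=\mathrm{id}$ places $f$ in $\mathcal{PM}^*_n(\Sigma)$, we conclude $f=\mathrm{id}$, as desired.

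I expect the delicate step to be the passage in the second paragraph between the purely algebraic restriction of $f_*|$ to $\mathbf{U}_i$ and the genuine topological action of $f$ on $\pi_1(\Sigma_{\hat i},z_i)$, together with the marked-point-versus-puncture bookkeeping needed to descend the faithfulness statement from $\mathcal{M}^*_1(\Sigma_{\hat i})$ down to $\mathcal{M}^*_n(\Sigma)$; one must verify both that $f_*|$ restricted to $\mathbf{U}_i=\ker q_{i*}$ really coincides with the map induced by the fiber-preserving homeomorphism $f$, and that an isotopy trivialising $f$ on the fiber rel $z_i$ and the punctures indeed fixes all of $\mathbf{z}$. An alternative endgame replaces the faithfulness on the fiber by Lemma~\ref{lem:normalsubgroup}(4): by the Remark preceding the lemma, $f_*|=\mathrm{id}$ forces $f$ to centralise the image of $Push|$ in $\mathcal{M}^*_n(\Sigma)$; using Theorem~\ref{thm:peripheral} to see that such an $f$ has trivial image in $\mathcal{M}^*(\Sigma)$, the Birman sequence~(\ref{eqn:braidMCG}) puts $f$ in the image of $Push|$, whence $f$ corresponds to an element of $C_{\mathbf{P}_n(\Sigma)_{/Z}}(\mathbf{U}_i)=1$ and so $f=\mathrm{id}$.
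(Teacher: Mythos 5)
Your argument is correct and follows essentially the same route as the paper: first use the pairwise distinctness of the $\mathbf{U}_i$ (Lemma~\ref{lem:normalsubgroup}(2)) to force $\rho(f)=\mathrm{id}$, then restrict $f$ to the fiber $\Sigma_{\hat i}$ and conclude from the faithfulness of the mapping class group action on $\pi_1(\Sigma_{\hat i},z_i)$ for a hyperbolic surface. The only cosmetic difference is that you invoke ``Dehn--Nielsen--Baer faithfulness'' where the paper spells this out via the $K(\pi,1)$ property together with the fact that homotopic homeomorphisms of a negative-Euler-characteristic surface are isotopic (and, incidentally, your computation $\chi(\Sigma_{\hat i})=3-\kappa$ matches the rank count $\kappa-2$ for $\mathbf{U}_i$ given earlier in the paper).
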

\begin{proof}
Suppose $f_*=Id$ on $\mathbf{B}_{n}(\Sigma)$ or $\mathbf{P}_{n}(\Sigma)$. 
Then $\rho(f)=Id_{\mathbf{z}}$ since $f_*(\mathbf{U}_i)=\mathbf{U}_j$ for $j=\rho(f)(i)$ and all $\mathbf{U}_i$'s are pairwise different by Lemma~\ref{lem:normalsubgroup}~(2).
Therefore $f$ can be regarded as an element in $\mathcal{M}^*(\Sigma_{\hat i},z_i)$, and so
it is enough to show that $f$ is trivial in $\mathcal{M}^*(\Sigma_{\hat i}, z_i)$ because both $\mathcal{M}^*(\Sigma_{\hat i}, z_i)$ and $\mathcal{M}^*_{n}(\Sigma)$ are subgroups of $\mathcal{M}^*_{p+n}(\Sigma_g)$.

Since $\Sigma_{\hat i}$ has the Euler characteristic  $\chi(\Sigma_{\hat i}) = 2-\kappa<0$ and so it is $K(\pi,1)$, there is a one-to-one correspondence between homotopy classes of maps on $(\Sigma_{\hat i},z_i)$ and endomorphisms on $\pi_1(\Sigma_{\hat i},z_i)=\mathbf{U}_i$.
This implies that $f$ is homotopic to the identity.
However any two homotopic homeomorphisms are isotopic on any surface with negative Euler characteristic \cite[Theorem~1.9]{FM}, and therefore $f$ is trivial in $\mathcal{M}^*(\Sigma_{\hat i},z_i)$.
\end{proof}

The above lemma describes the geometry of the automorphism groups, and we will focus on the algebraic part of them later.

\section{Automorphism groups of braid groups}
\subsection{Transvection subgroups}
In this section, we concerns the cases when $\chi(\Sigma)\ge 0$. Otherwise the center $Z(\mathbf{B}_n(\Sigma))$ is trivial and so is the transvection subgroup $tv(\mathbf{B}_n(\Sigma))$.
We consider two exact sequences as follows.
\begin{align*}
1&\longrightarrow\tv(\mathbf{B}_{n}(\Sigma))\longrightarrow
\Aut(\mathbf{B}_{n}(\Sigma))\stackrel{\Psi}{\longrightarrow}\Aut(\mathbf{B}_{n}(\Sigma)_{/Z}),\\
1&\longrightarrow\tv(\mathbf{P}_{n}(\Sigma))\longrightarrow\Aut(\mathbf{P}_{n}(\Sigma))\stackrel{\Phi}{\longrightarrow}\Aut(\mathbf{P}_{n}(\Sigma)_{/Z})\longrightarrow1.
\end{align*}

The maps $\Phi$ and $\Psi$ are obvious and the exactness at the center of each row is just the definition of the transvection subgroup
$\tv(G)=\ker(\Aut(G)\to\Aut(G_{/Z})).$
Note that $\phi\in\tv(G)$ if and only if for any $g\in G$, there exists $z_g\in Z(G)$ such that $\phi(g)=g z_g$.

However, the surjectivity of $\Phi$ is not quite obvious. 
For any $\phi\in\Aut(\mathbf{P}_{n}(\Sigma)_{/Z})$, we consider the automorphism $\phi\times Id$ on $\mathbf{P}_n(\Sigma)_{/Z}\times Z(\mathbf{P}_n(\Sigma))$. 
Since $\mathbf{P}_{n}(\Sigma)\simeq\mathbf{P}_{n}(\Sigma)_{/Z}\times Z(\mathbf{P}_{n}(\Sigma))$ by Lemma~\ref{lem:centerBraid}~(4),
we may regard $\phi\times Id$ as an element in $\Aut(\mathbf{P}_n(\Sigma))$, which is in the preimage of $\phi$ under $\Phi$. Hence $\Phi$ is surjective and splits, and so
\[
\Aut(\mathbf{P}_{n}(\Sigma))\simeq \tv(\mathbf{P}_{n}(\Sigma))\rtimes\Aut(\mathbf{P}_{n}(\Sigma)_{/Z}).
\]

Since the center is a characteristic subgroup, the restriction map $(\cdot)|_Z:\tv(\mathbf{P}_{n}(\Sigma))\to\Aut(Z(\mathbf{P}_{n}(\Sigma)))$ is well-defined. Moreover, it is surjective and splits since $(Id\times \phi_Z)|_Z=\phi_Z$ for any $\phi_Z\in\Aut(Z(\mathbf{P}_{n}(\Sigma)))$.

Let $\phi\in\ker(\cdot)|_Z$. Then for any $\alpha\in\mathbf{P}_{n}(\Sigma)_{/Z}$ and $z\in Z(\mathbf{P}_{n}(\Sigma))$, there exists $z'$ such that
$\phi(\alpha z)=\phi(\alpha)\phi(z)=(\alpha z')z.$
Therefore $\phi$ can be regarded as an element in $\operatorname{Hom}(\mathbf{P}_{n}(\Sigma)_{/Z}, Z(\mathbf{P}_{n}(\Sigma)))$, and so
\begin{equation}\label{eqn:generaltransvection}\tag{TV}
\tv(\mathbf{P}_{n}(\Sigma))\simeq H^1\left(\mathbf{P}_{n}(\Sigma)_{/Z}; Z(\mathbf{P}_{n}(\Sigma))\right)\rtimes\Aut(Z(\mathbf{P}_{n}(\Sigma))).
\end{equation}

Zhang asserted in \cite[Lemma~2.1]{Zh2} that for any group $G$, the map $\Aut(G)\to\Aut(G_{/Z})$ is always surjective. However this is not true in general, in particular when $G$ is not a split central extension of $G_{/Z}$.
Indeed, $\mathbf{B}_4(S^2)_{/Z}$ is a subgroup of $\mathcal{M}_4^*(S^2)$ of index $2=|\mathcal{M}^*(S^2)|$, and 
\[
\Aut(\mathbf{B}_4(S^2))\simeq\tv(\mathbf{B}_4(S^2))\rtimes \mathcal{M}_4^*(S^2)
\]
due to Bellingeri \cite[Theorem~6.2]{Bel2}. Moreover, one can show that  $\Aut(\mathbf{B}_4(S^2)_{/Z})\simeq\mathcal{M}^*_4(S^2)\rtimes\mathbb{Z}_2$ by the direct computation.
However,
\[
\Psi(\Aut(\mathbf{B}_4(S^2)))=\mathcal{M}_4^*(S^2)\subsetneq
\mathcal{M}^*_4(S^2)\rtimes\mathbb{Z}_2\simeq \Aut(\mathbf{B}_4(S^2)_{/Z}).
\]
Therefore $\mathbf{B}_4(S^2)$ is a counter example to Zhang's assertion.

For $\Sigma=S^2$, $\mathbb{R}^2$ and $A$, both $\tv(\mathbf{P}_{n}(\Sigma))$ and $\tv(\mathbf{B}_{n}(\Sigma))$ are known by Charney and Crisp \cite{CC}, Bell and Margalit \cite{BM}, or can be computed as follows.

Let $N=\binom{n+p-1}2 -1$. Then
\begin{enumerate}
\item 
$\tv(\mathbf{P}_n(S^2))\simeq \mathbb{Z}_2^{N}$ and $\tv(\mathbf{B}_n(S^2))\simeq\mathbb{Z}_2$ for $n\ge 4$;
\item 
$\tv(\mathbf{P}_{n}(\mathbb{R}^2))\simeq\mathbb{Z}^N\rtimes\mathbb{Z}_2$ and $\tv(\mathbf{B}_{n}(\mathbb{R}^2))=1$ for $n\ge 3$;
\item 
$\tv(\mathbf{P}_{n}(A))\simeq\mathbb{Z}^N\rtimes\mathbb{Z}_2$ for $n\ge 2$ and $\tv(\mathbf{B}_{n}(A))\simeq \begin{cases}\mathbb{Z}&n\ge3;\\ \mathbb{Z}\rtimes\mathbb{Z}_2& n=2.\end{cases}$
\end{enumerate}

On the other hand, it is quite complicated to calculate the transvection subgroups for $\Sigma=T$ because $Z(\mathbf{B}_n(T))$ has rank 2.
The computation of $\tv(\mathbf{P}_n(T))$ and $\tv(\mathbf{B}_n(T))$ is given in the following lemma.
\begin{lem}\label{lem:transvection}
The transvection subgroups $\tv(\mathbf{P}_n(T))$ and $\tv(\mathbf{B}_n(T))$ are as follows.
\begin{enumerate}
\item $\tv(\mathbf{P}_n(T))\simeq\mathbb{Z}^{4(n-1)}\rtimes GL(2,\mathbb{Z})$ for $n\ge 2$;
\item $\tv(\mathbf{B}_n(T))\simeq GL(2,\mathbb{Z})[n]$ for $n\ge 2$, where $GL(2,\mathbb{Z})[n]$ is the $n$-congruence subgroup of $GL(2,\mathbb{Z})$.
\end{enumerate}
\end{lem}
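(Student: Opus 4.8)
The plan is to treat the two parts by different routes, since $\chi(T)=0$ forces $Z:=Z(\mathbf{B}_n(T))=Z(\mathbf{P}_n(T))\simeq\mathbb{Z}^2$ by Lemma~\ref{lem:centerBraid}~(1),(2), and hence $\Aut(Z)=GL(2,\mathbb{Z})$, but the pure and full braid groups sit over this common center very differently: $\mathbf{P}_n(T)\simeq\mathbf{P}_n(T)_{/Z}\times Z$ splits by Lemma~\ref{lem:centerBraid}~(4), whereas $\mathbf{B}_n(T)$ is not a split central extension and the map $\Psi$ need not be surjective. For (1) I would therefore invoke the splitting (\ref{eqn:generaltransvection}) directly, while for (2) I would work inside $\ker\Psi=\tv(\mathbf{B}_n(T))$ and describe each transvection explicitly as $g\mapsto g\,f(g)$ for a homomorphism $f\colon\mathbf{B}_n(T)\to Z$.

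For (1), formula (\ref{eqn:generaltransvection}) reads $\tv(\mathbf{P}_n(T))\simeq H^1(\mathbf{P}_n(T)_{/Z};Z)\rtimes\Aut(Z)$, and since $Z\simeq\mathbb{Z}^2$ is a trivial module the cohomology term is $\operatorname{Hom}(\mathbf{P}_n(T)_{/Z},\mathbb{Z}^2)=\operatorname{Hom}(H_1(\mathbf{P}_n(T)_{/Z}),\mathbb{Z}^2)$. The only genuine computation is $H_1(\mathbf{P}_n(T))\simeq\mathbb{Z}^{2n}$: by Goldberg's theorem (Theorem~\ref{thm:Goldberg}) the quotient $\mathbf{P}_n(T)/\langle\!\langle A_{i,j}\rangle\!\rangle$ is the abelian group $\pi_1(T)^n\simeq\mathbb{Z}^{2n}$, so it remains to see that every band generator $A_{i,j}$ is null-homologous; this I would get from the fact that (\ref{eqn:pscr}) exhibits each consecutive $A_{i,i+1}$ as a commutator, so $[A_{i,i+1}]=0$, together with the observation that conjugation by strand-permuting braids acts $\mathbb{Q}$-linearly and transitively on the classes $[A_{i,j}]$ in $H_1(\mathbf{P}_n(T))\otimes\mathbb{Q}$, whence all of them vanish rationally and the rank is $2n$. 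Feeding $H_1(\mathbf{P}_n(T))\simeq\mathbb{Z}^{2n}$ through the product decomposition $\mathbf{P}_n(T)\simeq\mathbf{P}_n(T)_{/Z}\times Z$ splits $\operatorname{Hom}(\mathbf{P}_n(T),\mathbb{Z}^2)\simeq\mathbb{Z}^{4n}$ as $\operatorname{Hom}(\mathbf{P}_n(T)_{/Z},\mathbb{Z}^2)\oplus\operatorname{Hom}(Z,\mathbb{Z}^2)$ with second summand $\mathbb{Z}^4$, leaving $\operatorname{Hom}(\mathbf{P}_n(T)_{/Z},\mathbb{Z}^2)\simeq\mathbb{Z}^{4(n-1)}$; assembling with $\Aut(Z)=GL(2,\mathbb{Z})$ yields the claimed $\mathbb{Z}^{4(n-1)}\rtimes GL(2,\mathbb{Z})$.

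For (2), I would use that every $\phi\in\tv(\mathbf{B}_n(T))$ has the form $\phi(g)=g\,f(g)$ for a homomorphism $f\colon\mathbf{B}_n(T)\to Z$ (the assignment $g\mapsto z_g$ is multiplicative because $z_g$ is central), and that restriction to the characteristic subgroup $Z$ gives a homomorphism $R\colon\tv(\mathbf{B}_n(T))\to\Aut(Z)=GL(2,\mathbb{Z})$, $\phi\mapsto\phi|_Z$. Abelianizing Bellingeri's presentation (Theorem~\ref{thm:presentation}) for $g=1,p=0$ gives $H_1(\mathbf{B}_n(T))\simeq\mathbb{Z}^2\oplus\mathbb{Z}_2$, generated by the handle classes $[a],[b]$ and a $2$-torsion class $[\sigma]$, and shows $[\Delta_a]=n[a]$ and $[\Delta_b]=n[b]$; hence the image of $Z$ in $H_1(\mathbf{B}_n(T))$ is exactly $n\mathbb{Z}^2$. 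Writing $f$ through $H_1$ as an integer matrix $M_f$ on $\{[a],[b]\}$ (the torsion class contributes nothing to $\operatorname{Hom}(\,\cdot\,,\mathbb{Z}^2)$), the restriction $f|_Z$ equals $nM_f$, so $\phi|_Z=I+nM_f\equiv I\pmod n$; thus $\operatorname{im}R\subseteq GL(2,\mathbb{Z})[n]$, and $R$ is injective since $I+nM_f=I$ forces $f|_Z=0$, hence $f=0$ by torsion-freeness of $Z$, hence $\phi$ is the identity. Surjectivity I would prove by realization: given $I+nM\in GL(2,\mathbb{Z})[n]$, take the $f$ with matrix $M$ and check that $g\mapsto g\,f(g)$ is an automorphism — it is an endomorphism because $f$ is a central-valued homomorphism, it is the identity on $\mathbf{B}_n(T)_{/Z}$, and it is invertible on $Z$, so a short five-lemma-style argument gives bijectivity. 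Together these identify $\tv(\mathbf{B}_n(T))$ with $GL(2,\mathbb{Z})[n]$.

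The main obstacle I anticipate lies in (2), in pinning down the exact congruence level. Everything turns on the two facts that $\Delta_a=(a_1\sigma_1\cdots\sigma_{n-1})^n$ maps to $n[a]$ in $H_1(\mathbf{B}_n(T))$ (so the center sits at index $n$ in each homological direction, producing the level-$n$ congruence condition) and that every matrix congruent to $I$ modulo $n$ is realized by an honest automorphism rather than a mere injective endomorphism; the realizability direction, though elementary, is where the bijectivity must be argued with care. In (1) the only nontrivial point is the vanishing of the band classes $[A_{i,j}]$, which I would secure either by the transitive-conjugation argument above or, equivalently, by reading it off Bellingeri's full pure-braid presentation.
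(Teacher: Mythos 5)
Your proposal is correct and follows essentially the same route as the paper: part (1) is the paper's argument verbatim (formula (\ref{eqn:generaltransvection}) plus Goldberg's theorem and the vanishing of the $[A_{i,j}]$ via (\ref{eqn:pscr}) to get $H_1(\mathbf{P}_n(T))\simeq\mathbb{Z}^{2n}$), and part (2) repackages the paper's direct relation-chasing through $H_1(\mathbf{B}_n(T))\simeq\mathbb{Z}^2\oplus\mathbb{Z}_2$ and $\operatorname{Hom}(-,Z)$, which rests on exactly the same identities $[\Delta_a]=n[a]$, $[\Delta_b]=n[b]$ and the $2$-torsion of $[\sigma_1]$ killing the $\sigma$-contribution. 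Your treatment of injectivity and surjectivity of $\phi\mapsto\phi|_Z$ is slightly more explicit than the paper's closing sentence, but the content is identical.
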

\begin{proof}
\noindent (1) 
Since $H_1(\mathbf{P}_n(T))=H_1(\mathbf{P}_n(T)_{/Z})\times Z(\mathbf{P}_n(T))$, we first compute $H_1(\mathbf{P}_n(T))$.
By (\ref{eqn:pscr}), $[A_{i,j}]=0$ in $H_1(\mathbf{P}_n(T))$ and therefore the abelianization factors as follows:
\[
\mathbf{P}_n(T)\to\mathbf{P}_n(T)/\langle\!\langle A_{i,j}\rangle\!\rangle\to H_1(\mathbf{P}_n(T)).
\]
However, the group in the middle is isomorphic to $\prod^n \pi_1(T)\simeq \mathbb{Z}^{2n}$ by Theorem~\ref{thm:Goldberg}, which is already abelian and generated by $[a_{i,1}]$'s and $[b_{i,1}]$'s, and therefore the map on the right must be an isomorphism.
Moreover, since $Z(\mathbf{P}_n(T))\simeq \mathbb{Z}^2$ generated by 
\[
\Delta_a=a_{1,1}\dots a_{n,1},\quad \Delta_b=b_{1,1}\dots b_{n,1},
\]
one each of the $[a_{i,1}]$'s and $[b_{i,1}]$'s, say $[a_{n,1}]$ and $[b_{n,1}]$ are redundant in $H_1(\mathbf{P}_n(T)_{/Z})$.
Hence we have $H_1(\mathbf{P}_n(T)_{/Z})\simeq \mathbb{Z}^{2(n-1)}$, and so
\[
H^1(\mathbf{P}_n(T)_{/Z};Z(\mathbf{P}_n(T)))\simeq \mathbb{Z}^{4(n-1)}.
\]

Finally, since $\Aut(Z(\mathbf{P}_n(T)))\simeq GL(2,\mathbb{Z})$, the equation (\ref{eqn:generaltransvection}) completes the proof.

\noindent (2) Let $\phi\in\tv(\mathbf{B}_n(T))$.
Then the braid relations $\rm(BR_1), (BR_2)$ imply that there exists $z\in Z(\mathbf{B}_n(T))$ so that $\phi(\sigma_i)$ is $\sigma_i z$ for all $i$, and moreover $\rm(SCR)$ forces $z^2$ to be trivial. 
Then $z$ must be trivial since $Z(\mathbf{B}_n(T))$ is torsion-free.
Hence there exist integers $x_1, y_1, x_2$ and $y_2$ such that
\[
\phi(a_1)=a_1\Delta_a^{x_1}\Delta_b^{y_1},\quad 
\phi(b_1)=b_1\Delta_a^{x_2}\Delta_b^{y_2}.
\]

From this we have
\[\phi(\Delta_a)=\phi(a_1\sigma_1\dots\sigma_{n-1})^n=\Delta_a^{nx_1+1}\Delta_b^{ny_1},\]
\[\phi(\Delta_b)=\phi(b_1\sigma_1\dots\sigma_{n-1})^n=\Delta_a^{nx_2}\Delta_b^{ny_2+1}.\]
Hence $\phi|_{Z(\mathbf{B}_n(T))}$ can be represented by a $2\times2$ matrix
$M_\phi=\left(
\begin{matrix}
nx_1+1& nx_2\\
ny_1&ny_2+1
\end{matrix}\right)$ with determinant $\pm1$ since $M_{\phi}\in GL(2,\mathbb{Z})\simeq\Aut(Z)$, and so $M_\phi\in GL(2,\mathbb{Z})[n]$.

Conversely, for any matrix $M\in GL(2,\mathbb{Z})[n]$, one can find integers $x_1, y_1, x_2$ and $y_2$ which define an automorphism in $\tv(\mathbf{B}_n(T))$ by using these parameters.
\end{proof}

Note that the automorphism $\phi\in\tv(\mathbf{B}_2(T))$ with $M_\phi=-I$ is induced by the hyperelliptic involution $s\in Z(\mathcal{M}^*_2(T))$ interchanging points of $\mathbf{z}$.
Hence $s\in\tv(\mathbf{B}_2(T))\cap (\cdot)_*(\mathcal{M}^*_2(T))$.

In general, it is easy to check that
\begin{align*}
Z(\mathcal{M}^*_n(\Sigma))&\subset C_{\mathcal{M}^*_n(\Sigma)}(\mathbf{B}_n(\Sigma)_{/Z}) \simeq \tv(\mathbf{B}_n(\Sigma))\cap (\cdot)_*(\mathcal{M}^*_n(\Sigma))\\
&\subset C_{\mathcal{M}^*_n(\Sigma)}(\mathbf{P}_n(\Sigma)_{/Z}) \simeq \tv(\mathbf{P}_n(\Sigma))\cap (\cdot)_*(\mathcal{M}^*_n(\Sigma)).
\end{align*}

We claim that all inclusions above are indeed bijective as follows.

\begin{lem}\label{lem:intersection}Suppose $\kappa\ge 4$. Then the centralizer $C_{\mathcal{M}^*_n(\Sigma)}(\mathbf{P}_n(\Sigma)_{/Z})$ is isomorphic to the center $Z(\mathcal{M}^*_n(\Sigma))$,
which is either $\mathbb{Z}_2$ if $(\Sigma,n)$ is $(A,2)$ or $(T,2)$, or trivial otherwise.
\end{lem}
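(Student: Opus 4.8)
The plan is to prove both inclusions $Z(\mathcal{M}^*_n(\Sigma))\subseteq C_{\mathcal{M}^*_n(\Sigma)}(\mathbf{P}_n(\Sigma)_{/Z})$ (already recorded in the chain preceding the statement) and its reverse, and then to evaluate the common group. First I would dispose of the case $\chi(\Sigma)<0$: there $Z(\mathbf{P}_n(\Sigma))=Z(\mathbf{B}_n(\Sigma))=1$ by Lemma~\ref{lem:centerBraid}(1),(3), so $\mathbf{P}_n(\Sigma)_{/Z}=\mathbf{P}_n(\Sigma)$ and hence $\tv(\mathbf{P}_n(\Sigma))=1$; through the isomorphism $C_{\mathcal{M}^*_n(\Sigma)}(\mathbf{P}_n(\Sigma)_{/Z})\simeq\tv(\mathbf{P}_n(\Sigma))\cap(\cdot)_*(\mathcal{M}^*_n(\Sigma))$ the entire chain collapses, forcing $Z(\mathcal{M}^*_n(\Sigma))=1$ as well. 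Thus it remains to treat $\chi(\Sigma)\ge0$, i.e. $\Sigma\in\{S^2,\mathbb{R}^2,A,T\}$ with $\kappa\ge4$.

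The structural heart of the argument is the identity $C_{\mathcal{M}^*_n(\Sigma)}(\mathbf{P}_n(\Sigma)_{/Z})=C_{\mathcal{M}^*_n(\Sigma)}(\mathbf{B}_n(\Sigma)_{/Z})$, which I would establish when $\kappa\ge5$. Let $f$ centralize $\mathbf{P}_n(\Sigma)_{/Z}$, equivalently let $f_*$ be the identity on $\mathbf{P}_n(\Sigma)_{/Z}$. Then $f_*$ fixes the image of each $\mathbf{U}_i$, while $f_*(\mathbf{U}_i)=\mathbf{U}_{\rho(f)(i)}$; since the $\mathbf{U}_i$ are pairwise distinct in $\mathbf{P}_n(\Sigma)_{/Z}$ for $\kappa\ge5$ by Lemma~\ref{lem:normalsubgroup}(3), this forces $\rho(f)=Id$. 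Now for each half-twist $\sigma_i$ and each $u\in\mathbf{U}_i$ one has $\sigma_i u\sigma_i^{-1}\in\mathbf{U}_{i+1}\subseteq\mathbf{P}_n(\Sigma)$, so applying $f_*$ (which fixes $\mathbf{P}_n(\Sigma)_{/Z}$ pointwise) gives $f_*(\sigma_i)\,u\,f_*(\sigma_i)^{-1}=\sigma_i u\sigma_i^{-1}$ in $\mathbf{B}_n(\Sigma)_{/Z}$. Hence $c_i:=\sigma_i^{-1}f_*(\sigma_i)$ centralizes $\mathbf{U}_i$; moreover $\rho(c_i)=Id$ because $\rho(f)=Id$, so $c_i\in\mathbf{P}_n(\Sigma)_{/Z}$ and therefore $c_i\in C_{\mathbf{P}_n(\Sigma)_{/Z}}(\mathbf{U}_i)=1$ by Lemma~\ref{lem:normalsubgroup}(4). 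Thus $f_*(\sigma_i)=\sigma_i$, so $f_*$ is the identity on $\mathbf{B}_n(\Sigma)_{/Z}=\langle\mathbf{P}_n(\Sigma)_{/Z},\sigma_1,\dots,\sigma_{n-1}\rangle$, that is, $f\in C_{\mathcal{M}^*_n(\Sigma)}(\mathbf{B}_n(\Sigma)_{/Z})$.

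Next I would pass from the full-braid centralizer to the center. Write $C=C_{\mathcal{M}^*_n(\Sigma)}(\mathbf{B}_n(\Sigma)_{/Z})$. For $n\ge3$ the group $\mathbf{B}_n(\Sigma)_{/Z}$ has trivial center (a central coset projects into the centerless $\mathbf{S}_n$, so it lies in $\mathbf{P}_n(\Sigma)_{/Z}$ and centralizes each $\mathbf{U}_i$, hence is trivial by Lemma~\ref{lem:normalsubgroup}(4)), and it is normal in $\mathcal{M}^*_n(\Sigma)$ with quotient $\mathcal{M}^*(\Sigma)$. Consequently $C$ is normal, $C\cap\mathbf{B}_n(\Sigma)_{/Z}=1$, and $[C,\mathbf{B}_n(\Sigma)_{/Z}]=1$; so for $f\in C$ and any $g\in\mathcal{M}^*_n(\Sigma)$ one has $[f,g]=f^{-1}(g^{-1}fg)\in C$. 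If the image $\bar f$ of $f$ in $\mathcal{M}^*(\Sigma)$ is central, then $[f,g]\in\mathbf{B}_n(\Sigma)_{/Z}$, whence $[f,g]\in C\cap\mathbf{B}_n(\Sigma)_{/Z}=1$ and $f\in Z(\mathcal{M}^*_n(\Sigma))$. It therefore suffices to show the image $\bar C\subseteq Z(\mathcal{M}^*(\Sigma))$. For the three genus-zero surfaces $\mathcal{M}^*(\Sigma)$ is abelian ($\mathbb{Z}_2$ for $S^2,\mathbb{R}^2$ and $\mathbb{Z}_2\times\mathbb{Z}_2$ for $A$), so this is automatic; for $T$, Goldberg's Theorem~\ref{thm:Goldberg} identifies $\mathbf{P}_n(T)/\ker\iota_*\simeq\pi_1(T)^n$ with the center mapping onto the diagonal, and analysing the induced $GL(2,\mathbb{Z})$-action on $\pi_1(T)^n/\mathrm{diag}$ shows that triviality of $f_*$ on the abelianization forces $\bar f\in\{\pm I\}=Z(GL(2,\mathbb{Z}))$. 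This gives $C=Z(\mathcal{M}^*_n(\Sigma))\simeq\tv(\mathbf{B}_n(\Sigma))\cap(\cdot)_*(\mathcal{M}^*_n(\Sigma))$, which I then compute surface by surface: for $\mathbb{R}^2$ one has $\tv(\mathbf{B}_n(\mathbb{R}^2))=1$, and for $S^2$ the decomposition $\Aut(\mathbf{B}_n(S^2))\simeq\tv(\mathbf{B}_n(S^2))\rtimes\mathcal{M}^*_n(S^2)$ of Bellingeri yields $\tv\cap(\cdot)_*=1$, so $Z=1$ in both cases, while for $T$ with $n\ge3$ the displayed $\pi_1(T)^n$ analysis already gives $C=1$.

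The remaining work, which I expect to be the main obstacle, is twofold. The delicate structural case is the punctured genus-zero surface $A$, where $\tv(\mathbf{B}_n(A))\simeq\mathbb{Z}$ is infinite and one must verify that no nontrivial transvection is geometric; here the action on $\pi_1(A)^n/\mathrm{diag}$ pins $\bar f$ down only up to the orientation-reversing puncture-swap, so the argument must be supplemented by the induced outer action of $\mathcal{M}^*(A)$ on $\mathbf{P}_n(A)_{/Z}$ to eliminate that class and conclude $f\in Z(\mathbf{P}_n(A)_{/Z})=1$. The second is the boundary cases $\kappa=4$, namely $(S^2,4)$, $(\mathbb{R}^2,3)$, $(A,2)$ and $(T,2)$, where all $\mathbf{U}_i$ coincide in $\mathbf{P}_n(\Sigma)_{/Z}$ by Lemma~\ref{lem:normalsubgroup}(3), so the step $\rho(f)=Id$ above breaks down and $f$ may genuinely interchange the marked points. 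For $(A,2)$ and $(T,2)$ I would resolve this exactly as in the torus computation, tracking the action on $\pi_1(\Sigma)^2/\mathrm{diag}$: the constraint ($\bar f=-I$ for $T$, the reflection for $A$) is met precisely by the hyperelliptic involution interchanging the two points, producing the factor $\mathbb{Z}_2$ noted in the remark before the lemma. Finally the two Goldberg-degenerate cases $(S^2,4)$ and $(\mathbb{R}^2,3)$, where $\pi_1(\Sigma)$ carries no information, I would dispatch directly from the explicit forms of $\tv(\mathbf{P}_n(\Sigma))$ recorded above together with the automorphism-group results of Dyer--Grossman and Bellingeri.
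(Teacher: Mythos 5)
Your outline is sound where it is actually carried out (the reduction $\chi<0$, the passage from $C_{\mathcal{M}^*_n(\Sigma)}(\mathbf{P}_n(\Sigma)_{/Z})$ to $C_{\mathcal{M}^*_n(\Sigma)}(\mathbf{B}_n(\Sigma)_{/Z})$ via $c_i=\sigma_i^{-1}f_*(\sigma_i)\in C_{\mathbf{P}_n(\Sigma)_{/Z}}(\mathbf{U}_i)=1$, and the commutator argument forcing $f\in Z(\mathcal{M}^*_n(\Sigma))$ once $\bar f$ is central in $\mathcal{M}^*(\Sigma)$), but as submitted it is not a proof: you explicitly defer the annulus case for general $n$, all four $\kappa=4$ cases, and the genus-one homological analysis to arguments you describe but do not execute, and you outsource $(S^2,n)$ and $(\mathbb{R}^2,n)$ to Dyer--Grossman and Bellingeri. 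One of the sketched claims is also off: for the torus with $n\ge3$ (where $\rho(f)=Id$), triviality of the diagonal action of $M\in GL(2,\mathbb{Z})$ on $(\mathbb{Z}^2)^n/\mathrm{diag}$ forces $M=I$, not $M\in\{\pm I\}$; the matrix $-I$ only becomes admissible when coupled with the swap of marked points at $n=2$. That slip is harmless for the conclusion but shows the computation was not done.

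The missing idea -- which is the entire content of the paper's proof -- is Lemma~\ref{lem:centerBraid}~(5): for $\chi\ge0$ one has $\mathbf{P}_n(\Sigma)_{/Z}\simeq\mathbf{P}_{\kappa-3}(\Sigma')$ with $\Sigma'$ equal to $\Sigma_{0,3}$ or $\Sigma_{1,1}$, and $\mathcal{PM}^*_n(\Sigma)\subset\mathcal{M}^*_{\kappa-3}(\Sigma')$. Since $\chi(\Sigma')=-1$, the centralizer $C_{\mathcal{M}^*_{\kappa-3}(\Sigma')}(\mathbf{P}_{\kappa-3}(\Sigma'))$ is trivial by the already-settled negative-Euler-characteristic case, so $C_{\mathcal{PM}^*_n(\Sigma)}(\mathbf{P}_n(\Sigma)_{/Z})=1$ in one stroke. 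For $n\ge3$ one reduces to $\mathcal{PM}^*_n(\Sigma)$ because $\rho(f)\in Z(\mathbf{S}_n)=1$; for $n=2$ (only $(A,2)$ and $(T,2)$ survive the constraint $\kappa\ge4$) one splits $\mathcal{M}^*_2(\Sigma)=Z(\mathcal{M}^*_2(\Sigma))\times\mathcal{M}^*_2(\Sigma)_{/Z}$ with $\mathcal{M}^*_2(\Sigma)_{/Z}\simeq\mathcal{PM}^*_2(\Sigma)$ and applies the same reduction to the second factor, which produces exactly the $\mathbb{Z}_2$ in the statement. This single observation uniformly dispatches every case you list as a remaining obstacle -- the annulus, the $GL(2,\mathbb{Z})$ analysis for $T$, and all the $\kappa=4$ degenerations -- without any surface-by-surface transvection computations or appeal to external automorphism-group results. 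I would recommend reorganizing your argument around that reduction rather than completing the case analysis you propose.
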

\begin{proof}
There is nothing to prove for $\chi(\Sigma)<0$ since $\tv(\mathbf{P}_n(\Sigma))=1$.

Suppose $\chi(\Sigma)\ge 0$.
If $n\ge3$, then for any $f\in C_{\mathcal{M}^*_n(\Sigma)}(\mathbf{P}_n(\Sigma)_{/Z})$, the induced permutation $\rho(f)$ is the identity on $\mathbf{z}$ since $\rho(f)$ is in the center of the symmetric group $\mathbf{S}_n$ which is trivial. Hence
\[
C_{\mathcal{M}^*_n(\Sigma)}(\mathbf{P}_n(\Sigma)_{/Z})=C_{\mathcal{PM}^*_n(\Sigma)}(\mathbf{P}_n(\Sigma)_{/Z}).
\]
However, $\mathbf{P}_n(\Sigma)_{/Z}\simeq\mathbf{P}_{\kappa-3}(\Sigma')$ where $\Sigma'$ is either $\Sigma_{0,3}$ or $\Sigma_{1,1}$ by Lemma~\ref{lem:centerBraid}~(5), and $\mathcal{PM}^*_n(\Sigma)\subset \mathcal{M}^*_{\kappa-3}(\Sigma')$.
Therefore,
\[
C_{\mathcal{PM}^*_n(\Sigma)}(\mathbf{P}_n(\Sigma)_{/Z})\subset C_{\mathcal{M}^*_{\kappa-3}(\Sigma')}(\mathbf{P}_{\kappa-3}(\Sigma'))
\]
which is trivial since $\chi(\Sigma')=-1$.

Therefore, the only case we have to consider is when $\chi(\Sigma)\ge 0$ and $n\le 2$. By the constraint of $\kappa\ge4$, there are only two possibilities, namely the 2-braid groups on $A$ and $T$. In each case, the center $Z(\mathcal{M}^*_2(\Sigma))$ is isomorphic to $\mathbb{Z}_2$ and
\[
\mathcal{M}^*_2(\Sigma)= Z(\mathcal{M}^*_2(\Sigma))\times\mathcal{M}^*_2(\Sigma)_{/Z},\quad
\mathcal{M}^*_2(\Sigma)_{/Z}\simeq\mathcal{PM}^*_2(\Sigma).
\]
Hence 
\[
C_{\mathcal{M}^*_2(\Sigma)}(\mathbf{P}_2(\Sigma)_{/Z})=
Z(\mathcal{M}^*_2(\Sigma))\times C_{\mathcal{PM}^*_2(\Sigma)}(\mathbf{P}_2(\Sigma)_{/Z}).
\]

Since the rightmost factor is trivial by the same argument as above, this completes the proof.
\end{proof}

Since $\tv(\mathbf{B}_n(\Sigma))\cap \Inn(\mathbf{B}_n(\Sigma))=1$, the subgroup $\tv(\mathbf{B}_n(\Sigma))\cap(\cdot)^*\mathcal{M}^*(\Sigma)$ of $\Out(\mathbf{B}_n(\Sigma))$ is also isomorphic to $Z(\mathcal{M}^*_n(\Sigma))$. So we may identify $Z(\mathcal{M}^*_n(\Sigma))$ with this subgroup of $\mathcal{M}^*(\Sigma)$.

\begin{cor}
Suppose $\kappa\ge4$. Then $\tv(\mathbf{B}_n(\Sigma))\rtimes\mathcal{M}^*_n(\Sigma)/Z$ and $\tv(\mathbf{B}_n(\Sigma))\rtimes\mathcal{M}^*(\Sigma)/Z$ are isomorphic to subgroups of $\Aut(\mathbf{B}_n(\Sigma))$ and $\Out(\mathbf{B}_n(\Sigma))$, respectively.
\end{cor}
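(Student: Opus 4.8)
The plan is to realize both groups concretely as the internal product of the transvection subgroup with the geometric image of the mapping class group, and then to extract the asserted semidirect structure. Write $T=\tv(\mathbf{B}_n(\Sigma))$, which is normal in $\Aut(\mathbf{B}_n(\Sigma))$ by definition, and $M=(\cdot)_*(\mathcal{M}^*_n(\Sigma))$, which is isomorphic to $\mathcal{M}^*_n(\Sigma)$ by Lemma~\ref{lem:kernel}. Since $T$ is normal, $H:=TM$ is a subgroup of $\Aut(\mathbf{B}_n(\Sigma))$, and the chain of inclusions preceding Lemma~\ref{lem:intersection} together with that lemma identifies $T\cap M$ with $(\cdot)_*(Z)$, where $Z=Z(\mathcal{M}^*_n(\Sigma))$. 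If $Z=1$ then $H=T\rtimes M\cong\tv(\mathbf{B}_n(\Sigma))\rtimes\mathcal{M}^*_n(\Sigma)$ and we are done, since $\mathcal{M}^*_n(\Sigma)/Z=\mathcal{M}^*_n(\Sigma)$. If $Z\cong\mathbb{Z}_2$—which by Lemma~\ref{lem:intersection} happens only for $(\Sigma,n)=(A,2)$ or $(T,2)$—I would use the splitting $\mathcal{M}^*_n(\Sigma)=Z\times\mathcal{M}^*_n(\Sigma)/Z$ established in the proof of that lemma to set $M'=(\cdot)_*(\mathcal{M}^*_n(\Sigma)/Z)$. Then $M'\cap T\subseteq M\cap T=(\cdot)_*(Z)$ meets $M'$ trivially, while $TM'=T\cdot(\cdot)_*(Z)\cdot M'=TM$ because $(\cdot)_*(Z)\subseteq T$; hence $H=T\rtimes M'\cong\tv(\mathbf{B}_n(\Sigma))\rtimes\mathcal{M}^*_n(\Sigma)/Z$. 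This proves the first assertion.

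For the second assertion I would pass to $\Out(\mathbf{B}_n(\Sigma))=\Aut(\mathbf{B}_n(\Sigma))/\Inn(\mathbf{B}_n(\Sigma))$ along the quotient map $q$. The key observation is that point-pushing elements act on $\mathbf{B}_n(\Sigma)$ by honest conjugation, so that $(\cdot)_*\circ Push$ is precisely the inner-automorphism map and $\Inn(\mathbf{B}_n(\Sigma))=(\cdot)_*(Push(\mathbf{B}_n(\Sigma)_{/Z}))\subseteq M\subseteq H$; consequently $q(H)=\bar T\,\bar M$ with $\bar T=q(T)$ and $\bar M=q(M)$. Since $\tv(\mathbf{B}_n(\Sigma))\cap\Inn(\mathbf{B}_n(\Sigma))=1$, the map $q$ is injective on $T$, so $\bar T\cong\tv(\mathbf{B}_n(\Sigma))$ is a normal subgroup of $q(H)$; and by the Birman sequence (\ref{eqn:braidMCG}) we have $\bar M=M/\Inn(\mathbf{B}_n(\Sigma))\cong\mathcal{M}^*_n(\Sigma)/Push(\mathbf{B}_n(\Sigma)_{/Z})\cong\mathcal{M}^*(\Sigma)$. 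As recorded just before the statement, $\bar T\cap\bar M\cong Z$ and this intersection lies inside $\bar T$, so the second isomorphism theorem yields the extension $1\to\bar T\to q(H)\to\bar M/(\bar T\cap\bar M)\cong\mathcal{M}^*(\Sigma)/Z\to 1$.

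It remains to split this extension, and this is where I expect the real work to be. When $Z=1$ the intersection $\bar T\cap\bar M$ is trivial, so $\bar M$ itself is a complement to $\bar T$ and $q(H)\cong\tv(\mathbf{B}_n(\Sigma))\rtimes\mathcal{M}^*(\Sigma)$, as required. When $Z\cong\mathbb{Z}_2$ one must find a subgroup of $q(H)$ isomorphic to $\mathcal{M}^*(\Sigma)/Z$ and complementing $\bar T$; for $(A,2)$ this is easily checked directly, the image of $Z$ admitting a complement in the small group $\mathcal{M}^*(A)$. The genuinely delicate case is $(T,2)$: here $\mathcal{M}^*(T)\cong GL(2,\mathbb{Z})$ and the image of $Z$ is $\langle-I\rangle$, which has \emph{no} complement in $GL(2,\mathbb{Z})$, since $-I$ is a square in $SL(2,\mathbb{Z})$ (equivalently, $-I$ lies in every index-two subgroup). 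Thus no complement can live inside $\bar M$, and the section $\mathcal{M}^*(T)/Z\cong PGL(2,\mathbb{Z})\to q(H)$ must be built using the transvections. Concretely, $\bar T\cong\tv(\mathbf{B}_2(T))\cong GL(2,\mathbb{Z})[2]$ by Lemma~\ref{lem:transvection}, and this group already contains $-I$; since $\bar M$ acts on $\bar T$ by matrix conjugation, I would twist a set-theoretic lift $GL(2,\mathbb{Z})\to\bar M$ by a $\bar T$-valued cochain so as to replace the order-four lift of the involution $S\in PSL(2,\mathbb{Z})$ by an order-two element, thereby producing a homomorphic section. Verifying that the resulting cocycle condition is solvable is the main obstacle, and is exactly the point at which the uniform argument breaks down and the exceptional case has to be treated by hand.
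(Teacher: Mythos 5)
Your argument for the $\Aut$ statement, and for the $\Out$ statement whenever $Z=1$ or $(\Sigma,n)=(A,2)$, is correct and is essentially the content that the paper compresses into ``this follows from Lemma~\ref{lem:kernel} and Lemma~\ref{lem:intersection}'': the transvection subgroup is normal, the geometric subgroup meets it exactly in $Z$, and in the two cases where $Z\neq1$ the decomposition $\mathcal{M}^*_2(\Sigma)=Z\times\mathcal{M}^*_2(\Sigma)_{/Z}$ from the proof of Lemma~\ref{lem:intersection} hands you a complement inside the geometric subgroup. You have also put your finger on a point the paper's one-line proof does not address: for $(\Sigma,n)=(T,2)$ the image of $Z$ in $\mathcal{M}^*(T)\simeq GL(2,\mathbb{Z})$ is $\langle -I\rangle$, and since $-I$ is a square it lies in every index-two subgroup, so no complement to $\bar T$ can be found inside the image of the mapping class group. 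That observation is correct and genuinely needed.

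The gap is that you stop there: for $(T,2)$ you only assert that a section of $\Out(\mathbf{B}_2(T))\to PGL(2,\mathbb{Z})$ should be obtainable by twisting a set-theoretic lift by a $\bar T$-valued cochain, and you explicitly defer the verification. Since for this pair the subgroup $\bar T\bar M$ is all of $\Out(\mathbf{B}_2(T))$, the corollary (and the corresponding case of Theorem~\ref{thm:AutBn}) is not proved until that step is done. It does close, and quickly, once set up as you suggest. Writing $\Gamma=GL(2,\mathbb{Z})$, the multiplication map $(t,g)\mapsto tg$ identifies $\Out(\mathbf{B}_2(T))$ with $(\Gamma[2]\rtimes\Gamma)/\langle(-I,-I)\rangle$, where $\Gamma$ acts by matrix conjugation; the kernel is exactly $\langle(-I,-I)\rangle$ because $\bar T\cap\bar M=\langle\sigma\rangle$ with $\sigma$ central and equal to $-I$ in both factors. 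A complement to $\bar T$ is then the same thing as a crossed homomorphism $\tau:\Gamma\to\Gamma[2]$ with $\tau(-I)=-I$. Using the amalgam presentation $\Gamma\simeq D_4*_{D_2}D_6=\langle S,U,J\mid S^4,\,U^6,\,S^2U^{-3},\,J^2,\,(JS)^2,\,(JU)^2\rangle$ with $S=\left(\begin{smallmatrix}0&-1\\1&0\end{smallmatrix}\right)$, $U=\left(\begin{smallmatrix}0&-1\\1&1\end{smallmatrix}\right)$, $J=\left(\begin{smallmatrix}0&1\\1&0\end{smallmatrix}\right)$, the assignment $\tau(S)=\left(\begin{smallmatrix}1&0\\0&-1\end{smallmatrix}\right)$, $\tau(U)=-I$, $\tau(J)=I$ takes values in $\Gamma[2]$, satisfies the cocycle identity on every relator (for instance $\tau(S^2)=\tau(S)\cdot S\tau(S)S^{-1}=-I$), and has $\tau(-I)=\tau(S^2)=-I$. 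Hence the section exists and $\Out(\mathbf{B}_2(T))\simeq GL(2,\mathbb{Z})[2]\rtimes PGL(2,\mathbb{Z})$. With this (or any equivalent) verification supplied, your proof is complete; without it, the $(T,2)$ case of the second assertion is still open.
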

\begin{proof}
This follows directly from Lemma~\ref{lem:kernel} and Lemma~\ref{lem:intersection}.
\end{proof}

\subsection{Automorphism groups}
We first consider $\Aut(\mathbf{P}_{n}(\Sigma)_{/Z})$.
Recall that $\mathbf{P}_n(\Sigma)_{/Z}=1$ if and only if $\kappa\le3$, and there is nothing to do.
If $\kappa=4$, then $\Aut(\mathbf{P}_{n}(\Sigma)_{/Z})\simeq\Aut(\mathbf{F}_2)$ by Lemma~\ref{lem:centerBraid}~(5).

For $\kappa\ge 5$, all known results can be summarized as follows.
\begin{thm}\cite{Bel2, CC, IIM, KY}\label{thm:kappa5}
Suppose $\kappa\ge 5$. If $\chi\ge -1$ or $p=0$, then
\[\Aut(\mathbf{P}_{n}(\Sigma_{g,p})_{/Z})\simeq\mathcal{M}^*_{p+n}(\Sigma_g).\]
\end{thm}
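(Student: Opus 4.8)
\emph{Overall strategy.} The statement collects results that are already in the literature, so the plan is to prove it by reducing each admissible triple $(g,p,n)$ to a group whose automorphisms have been computed, and then to check that the hypothesis $\kappa\ge5$ with $\chi\ge-1$ or $p=0$ both meets the numerical hypothesis of the invoked theorem and produces exactly the index $p+n$ on the right-hand side. Under these constraints $\Sigma_{g,p}$ belongs to one of four families: the closed surfaces $(p=0,\ g\ge0)$, the once-punctured torus $(g,p)=(1,1)$, and the punctured spheres $(g,p)=(0,p)$ with $1\le p\le3$. In every case the first move is to normalize the left-hand side via Lemma~\ref{lem:centerBraid}: when $\chi<0$, part~(3) gives $Z(\mathbf{P}_n(\Sigma))=1$, so $\mathbf{P}_n(\Sigma)_{/Z}=\mathbf{P}_n(\Sigma)$; when $\chi\ge0$, parts~(4) and~(5) identify $\mathbf{P}_n(\Sigma)_{/Z}$ with the centerless group $\mathbf{P}_{\kappa-3}(\Sigma_{0,3})$ if $g=0$ or $\mathbf{P}_{\kappa-3}(\Sigma_{1,1})$ if $g=1$.

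\emph{Genus zero.} For each $p\in\{0,1,2,3\}$ the reduction of Lemma~\ref{lem:centerBraid}~(5) (which is the identity when $p=3$, as $\Sigma_{0,3}$ is already centerless) gives a uniform isomorphism $\mathbf{P}_n(\Sigma_{0,p})_{/Z}\simeq\mathbf{P}_{\,n+p-3}(\Sigma_{0,3})\simeq\mathbf{P}_{\,n+p}(S^2)_{/Z}$, since $\kappa-3=(n+p)-3$ throughout. Thus all four families collapse to the single computation of $\Aut\!\left(\mathbf{P}_k(S^2)_{/Z}\right)$ with $k=n+p$, and $\kappa\ge5$ is exactly $k\ge5$ (the excluded value $k=4$ being the free-group case). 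I would then invoke the punctured-sphere computations of Charney--Crisp~\cite{CC} and Bellingeri~\cite{Bel2} — which handle the classical pure braid group and the pure Artin groups of types $B$ and $\tilde C$ — to conclude $\Aut\!\left(\mathbf{P}_k(S^2)_{/Z}\right)\simeq\mathcal{M}^*_k(S^2)$; reindexing $k=n+p$ gives $\mathcal{M}^*_{p+n}(\Sigma_0)$.

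\emph{Positive genus.} For the torus $(g,p)=(1,0)$, the Irmak--Ivanov--McCarthy theorem~\cite{IIM} applies directly to $\mathbf{P}_n(T)_{/Z}$ and gives $\mathcal{M}^*_n(\Sigma_1)$; here $\kappa=n+2\ge5$ is precisely the required $n\ge3$. For the once-punctured torus $(g,p)=(1,1)$ the group is centerless, but running Lemma~\ref{lem:centerBraid}~(5) backwards identifies it with a torus central quotient, $\mathbf{P}_n(\Sigma_{1,1})\simeq\mathbf{P}_{n+1}(T)_{/Z}$, so~\cite{IIM} again yields $\mathcal{M}^*_{n+1}(\Sigma_1)=\mathcal{M}^*_{p+n}(\Sigma_1)$, with $\kappa=n+3\ge5$ giving the needed $n+1\ge3$. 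Finally, for $p=0$ and $g\ge2$ the group is centerless and I split on $n$: \cite{IIM} for $n\ge3$, Kida--Yamagata~\cite{KY} for $n=2$, and the Dehn--Nielsen--Baer theorem (Theorem~\ref{thm:peripheral}) for $n=1$, where $\mathbf{P}_1(\Sigma_g)=\pi_1(\Sigma_g)$ and a closed surface realizes every automorphism; each gives $\mathcal{M}^*_n(\Sigma_g)=\mathcal{M}^*_{p+n}(\Sigma_g)$, and $\kappa=2g+n\ge5$ leaves all three subranges possible.

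\emph{Main obstacle.} No single deep lemma is required; the difficulty is organizational and interpretive. The cited theorems are stated for different objects — Artin groups, closed-surface braid groups, and surface groups — and in incompatible forms (full versus pure braid group, with or without the transvection factor that the splitting of $\Phi$ must strip off), so the real work is the bookkeeping: routing each family to the correct reference through the central-quotient isomorphisms of Lemma~\ref{lem:centerBraid}, reindexing the marked points to exactly $p+n$, and verifying that $\kappa\ge5$ implies each reference's own lower bound with no case left uncovered. The one genuinely subtle point is conceptual rather than numerical: the target is the \emph{full} extended mapping class group $\mathcal{M}^*_{p+n}(\Sigma_g)$, in which all $p+n$ marked points may be permuted, and not the smaller $\mathcal{M}^*_n(\Sigma_{g,p})$ that keeps the $p$ punctures distinguished. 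This extra symmetry is precisely what passing to the central quotient creates, since the reductions of Lemma~\ref{lem:centerBraid}~(5) erase the distinction between the $p$ punctures and the $n$ strands; establishing that every abstract automorphism is realized geometrically on the fully symmetric surface — which combines the realization statements of the cited theorems (ultimately resting on the Ivanov--Korkmaz rigidity, Theorem~\ref{thm:AutMCG}) with the faithfulness furnished by Lemma~\ref{lem:kernel} — is where the genuine content lies.
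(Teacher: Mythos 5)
Your proposal is correct and follows essentially the same route as the paper, which proves this theorem only by attribution: Remark~\ref{rmk:exceptionalPn} routes $g=p=0$ to \cite{Bel2}, $g=0$, $p\ge1$ to \cite{CC}, the torus and higher-genus closed cases with $n\ge3$ to \cite{IIM}, and $g\ge2$, $n=2$ to \cite{KY}, with the central-quotient identifications of Lemma~\ref{lem:centerBraid}~(5) doing the reindexing to $p+n$ exactly as you describe. Your extra observation that all genus-zero cases collapse to $\Aut(\mathbf{P}_{n+p}(S^2)_{/Z})$, and your explicit handling of the index shift $\mathbf{P}_n(\Sigma_{1,1})\simeq\mathbf{P}_{n+1}(T)_{/Z}$, are consistent with (and slightly more careful than) the paper's own bookkeeping.
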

\begin{rmk}\label{rmk:exceptionalPn}
The cases for $\chi\ge -1$ are covered by \cite{Bel2} for $g=p=0$ and by \cite{CC} for $g=0$ and $p\ge 1$ with the aid of Theorem~\ref{thm:AutMCG} due to Ivanov and Korkmaz \cite{Iv2, K}. 

If $p=0$, then the cases for $T$ with $n\ge 3$ and $\Sigma_g$ with $g\ge 2,n\ge 3$ are covered by \cite{IIM}, and the cases with $g\ge 2, n=2$ are covered by \cite{KY}.

Furthermore, the above discussion for $\kappa\le 4$ and theorem cover all the exceptional cases. Hence there is no unknown exceptional case for $\Aut(\mathbf{P}_{n}(\Sigma))$.
\end{rmk}

When $\chi<-1$, we have the following proposition which is the first half of Theorem~\ref{thm:AutPn} and will be proved later.
\begin{prop}\label{prop:AutPn}
Suppose $\chi<-1$. Then for any $n\ge 2$ and $p\ge 0$,
\[\Aut(\mathbf{P}_{n}(\Sigma))\simeq\mathcal{M}^*_{n}(\Sigma).\]
\end{prop}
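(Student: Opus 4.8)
The plan is to let genericity do most of the bookkeeping. Since $\chi<-1<0$, Lemma~\ref{lem:centerBraid}~(3) gives $Z(\mathbf{P}_n(\Sigma))=1$, so $\mathbf{P}_n(\Sigma)_{/Z}=\mathbf{P}_n(\Sigma)$ and $\tv(\mathbf{P}_n(\Sigma))=1$. Thus the map $(\cdot)_*|\colon\mathcal{M}^*_n(\Sigma)\to\Aut(\mathbf{P}_n(\Sigma))$, injective by Lemma~\ref{lem:kernel}, need only be shown surjective. Equivalently, given an arbitrary $\phi\in\Aut(\mathbf{P}_n(\Sigma))$, I must produce a (possibly orientation-reversing) homeomorphism $f$ of $(\Sigma,\mathbf{z})$ with $f_*=\phi$.

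The heart of the argument is an algebraic recognition of the distinguished free normal subgroups $\mathbf{U}_1,\dots,\mathbf{U}_n=\ker q_{i*}$, modelled on Irmak--Ivanov--McCarthy. These are pairwise distinct (Lemma~\ref{lem:normalsubgroup}~(2),(3), using $\kappa\ge5$ here) and self-centralizing, $C_{\mathbf{P}_n(\Sigma)}(\mathbf{U}_i)=1$ (Lemma~\ref{lem:normalsubgroup}~(4)). The plan is to characterize them intrinsically by attaching to each element $\beta$ the rank of the center $Z(C_{\mathbf{P}_n(\Sigma)}(\beta))$ of its centralizer: on the generators of the $\mathbf{U}_i$ this rank takes a distinguished value, so the invariant pins down first the relevant elements and then the subgroups they generate. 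Since $\phi$ preserves centralizers and their centers, it must permute the family $\{\mathbf{U}_i\}$, inducing some $\sigma\in\mathbf{S}_n$. Composing $\phi$ with the automorphism of a homeomorphism realizing $\sigma$, I may assume $\phi(\mathbf{U}_i)=\mathbf{U}_i$ for every $i$.

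With each $\mathbf{U}_i$ now $\phi$-invariant, $\phi$ restricts to an automorphism $\phi_i$ of the free group $\mathbf{U}_i\cong\pi_1(\Sigma_{\hat i},z_i)$ and descends to an automorphism $\bar\phi$ of each quotient $\mathbf{P}_n(\Sigma)/\mathbf{U}_i\cong\mathbf{P}_{n-1}(\Sigma)$. The plan is to induct on $n$: for $n\ge3$ the quotient is $\mathbf{P}_{n-1}(\Sigma)$ with $n-1\ge2$ on the same surface $\Sigma$ (so $\chi$ is unchanged and still $<-1$), whence $\bar\phi$ is realizable by the inductive hypothesis; for $n=2$ the quotient is $\pi_1(\Sigma)$ and realizability of $\bar\phi$ is governed by the Dehn--Nielsen criterion, Theorem~\ref{thm:peripheral}. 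Realizing $\phi$ itself then amounts to realizing the restrictions $\phi_i$ and checking that the resulting homeomorphisms of the $\Sigma_{\hat i}$ agree on the overlaps $\mathbf{U}_i\cap\mathbf{U}_j\ni A_{i,j}$, so as to glue into one homeomorphism of $\Sigma$.

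The main obstacle is exactly the hypothesis of Theorem~\ref{thm:peripheral}: one must show each $\phi_i$ preserves the peripheral structure of $\pi_1(\Sigma_{\hat i})$, whose puncture classes are represented by the linking elements $A_{i,j}$ (around the marked points $z_j$, $j\ne i$) and by the $\zeta_{i,t}$ (around the original punctures $p_t$). Here the permutation structure already does much of the work: the $A_{i,j}$ are distinguished as the elements lying in the intersection $\mathbf{U}_i\cap\mathbf{U}_j$ of two $\phi$-invariant subgroups, and Goldberg's Theorem~\ref{thm:Goldberg} identifies $\langle\!\langle A_{i,j}\rangle\!\rangle$ as $\ker\iota_*$, a subgroup one shows is characteristic; hence $\phi$ carries peripheral classes of marked-point type to classes of the same type, and the boundary relation~\eqref{eqn:ptr} forces the remaining $\zeta$-classes to be permuted among themselves. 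I expect the delicate point, and the place where $\chi<-1$ is indispensable, to be the centralizer-rank computation: only when there are enough marked points and punctures does the rank invariant separate the generator types cleanly and make the realizability conditions automatic. Once peripheral structure is secured, Theorem~\ref{thm:peripheral} realizes each $\phi_i$, the gluing across the $A_{i,j}$ is consistent, and we obtain $f\in\mathcal{M}^*_n(\Sigma)$ with $f_*=\phi$, establishing surjectivity and hence the proposition.
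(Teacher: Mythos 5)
Your overall architecture matches the paper's: trivial center reduces everything to surjectivity of $(\cdot)_*|$, the invariant $rk(\beta)=\operatorname{rank} Z(C_{\mathbf{P}_n(\Sigma)}(\beta))$ is used to recognize the distinguished elements, Theorem~\ref{thm:peripheral} realizes the restriction to a $\mathbf{U}_i$, and the self-centralizing property $C_{\mathbf{P}_n(\Sigma)}(\mathbf{U}_i)=1$ supplies the final rigidity (you do not need your induction on $n$ or the gluing over the overlaps $A_{i,j}$: once one restriction $\phi|_{\mathbf{U}_i}$ is realized by some $f$, Lemma~\ref{lem:AutUi} already forces $\phi=f_*$ globally). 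But there is a genuine gap exactly where you locate the ``delicate point,'' and it is not a computation one can defer. The rank invariant alone does \emph{not} pin down the generators of the $\mathbf{U}_i$: by Theorem~\ref{thm:pAcenter} every pseudo-Anosov element also has $rk=1$, and among reducible elements of rank $1$ there are three geometric types plus the possibility $C(\beta)\simeq\mathbf{P}_n(A)$. The paper needs the full package of Proposition~\ref{prop:rank}, the direct indecomposability of $C(\beta)_{/Z}$ (Proposition~\ref{prop:indecomposable}, Corollary~\ref{cor:centralizer}), and Lemma~\ref{lem:converse} together with $\chi<-1$ to exclude the annular case, just to conclude that each generator $x\in X$ is sent into the set $X^*$ of conjugates of generators. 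Your proposal jumps from ``the rank takes a distinguished value on generators'' to ``$\phi$ permutes the family $\{\mathbf{U}_i\}$,'' and that inference is unavailable.

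Moreover, even after one knows $\phi(X)\subset X^*$, it does not follow that the images of all the generators of a fixed $X_i$ lie in a common $\mathbf{U}_j$ (up to conjugacy), which is what you need for ``$\phi$ permutes the $\mathbf{U}_i$'' and for the peripheral-structure hypothesis of Theorem~\ref{thm:peripheral}. The paper establishes this with a separate combinatorial argument: strongly free generating sets (Lemmas~\ref{lem:strfree} and~\ref{lem:freesubgroup}), whose failure is detected by the lantern-type relations $A_{1,2}A_{3,1}A_{2,3}=(\sigma_1\sigma_2)^3$ and $A_{1,2}\zeta_{2,t}\zeta_{1,t}=\zeta_t\sigma_1\zeta_t\sigma_1$, forcing all the ends $end(\phi(y))$, $y\in X_i$, to share a single point, plus a counting argument to rule out that common point being a puncture. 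Separately, type-preservation is needed (an automorphism could a priori send a peripheral $\zeta_{i,t}$ to a non-peripheral $a_{j,r}$ when $g\ge1$, since both are type-$3$-like from the centralizer viewpoint in your scheme); the paper rules this out using the abelianization, relation~(\ref{eqn:pscr}), and the quotient of Corollary~\ref{cor:quotient}. Your appeal to Goldberg's theorem presupposes that $\ker\iota_*=\langle\!\langle A_{i,j}\rangle\!\rangle$ is characteristic, which is essentially equivalent to the type-preservation you are trying to prove, so it cannot be assumed. These are the missing ideas; without them the argument does not close.
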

Note that as mentioned before, all closed cases with $g\ge 2$ have been already treated in \cite{Bel2, IIM} and in \cite{KY}.

Using this proposition, we have the following theorem, which is nothing but a reorganization of known results and the above proposition.

\begin{thm}
For any $g, p\ge 0$ and $n\ge 2$ with $\kappa\ge 4$,
\[
\Aut(\mathbf{P}_{n}(\Sigma))\simeq
\begin{cases}
\tv(\mathbf{P}_{n}(\Sigma))\rtimes \Aut(\mathbf{F}_2)& \kappa=4;\\
\tv(\mathbf{P}_{n}(\Sigma))\rtimes\mathcal{M}^*_{p+n}(\Sigma_g) & \chi\ge -1, \kappa\ge 5;\\
\mathcal{M}^*_{n}(\Sigma)& \chi<-1,
\end{cases}
\]
and
\[
\Out(\mathbf{P}_{n}(\Sigma))\simeq
\begin{cases}
\tv(\mathbf{P}_{n}(\Sigma))\rtimes GL(2,\mathbb{Z})
& \kappa=4;\\
\tv(\mathbf{P}_{n}(\Sigma))\rtimes(\mathbf{S}_{p+n}\times\mathcal{M}^*(\Sigma_g)) & \chi\ge -1, \kappa\ge 5;\\
\mathbf{S}_n\times\mathcal{M}^*(\Sigma)& \chi<-1.
\end{cases}
\]
\end{thm}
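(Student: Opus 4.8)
The statement splits into three regimes---$\kappa=4$, the range $\chi\ge-1$ with $\kappa\ge5$, and $\chi<-1$---and in each the formula for $\Aut(\mathbf{P}_n(\Sigma))$ is immediate once $\Aut(\mathbf{P}_n(\Sigma)_{/Z})$ is identified. The plan is therefore to first dispatch the $\Aut$ lines and then spend the effort on the passage to $\Out$. Recall the splitting $\Aut(\mathbf{P}_n(\Sigma))\simeq\tv(\mathbf{P}_n(\Sigma))\rtimes\Aut(\mathbf{P}_n(\Sigma)_{/Z})$ already established via the surjectivity of $\Phi$; when $\chi<0$ the center is trivial by Lemma~\ref{lem:centerBraid}~(3), so this degenerates to $\mathbf{P}_n(\Sigma)_{/Z}=\mathbf{P}_n(\Sigma)$ with $\tv(\mathbf{P}_n(\Sigma))=1$. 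For the factor $\Aut(\mathbf{P}_n(\Sigma)_{/Z})$ I would substitute: $\Aut(\mathbf{F}_2)$ when $\kappa=4$, using $\mathbf{P}_n(\Sigma)_{/Z}\simeq\mathbf{F}_2$ from Lemma~\ref{lem:centerBraid}~(5); $\mathcal{M}^*_{p+n}(\Sigma_g)$ when $\chi\ge-1$ and $\kappa\ge5$, from Theorem~\ref{thm:kappa5}; and $\mathcal{M}^*_n(\Sigma)$ when $\chi<-1$, from Proposition~\ref{prop:AutPn}. This yields all three $\Aut$ formulas at once.

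For $\Out$ the first step is to reduce to $\Out(\mathbf{P}_n(\Sigma)_{/Z})$. The key observation is that $\mathbf{P}_n(\Sigma)_{/Z}$ is always centerless---either $\mathbf{F}_2$ ($\kappa=4$) or a pure braid group with $\chi<0$ (Lemma~\ref{lem:centerBraid}~(3),(5))---so under the direct product decomposition $\mathbf{P}_n(\Sigma)\simeq\mathbf{P}_n(\Sigma)_{/Z}\times Z(\mathbf{P}_n(\Sigma))$ of Lemma~\ref{lem:centerBraid}~(4), conjugation by $\beta=(\bar\beta,z)$ is exactly $c_{\bar\beta}\times\mathrm{id}$. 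Thus $\Inn(\mathbf{P}_n(\Sigma))$ lands inside the splitting image $\Aut(\mathbf{P}_n(\Sigma)_{/Z})$, meets $\tv(\mathbf{P}_n(\Sigma))$ trivially, and corresponds precisely to $\Inn(\mathbf{P}_n(\Sigma)_{/Z})$. Since inner automorphisms are normal in $\Aut(\mathbf{P}_n(\Sigma))$, dividing out gives $\Out(\mathbf{P}_n(\Sigma))\simeq\tv(\mathbf{P}_n(\Sigma))\rtimes\Out(\mathbf{P}_n(\Sigma)_{/Z})$, reducing the problem to the outer automorphism group of the centerless quotient.

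It then remains to compute $\Out(\mathbf{P}_n(\Sigma)_{/Z})$ case by case. For $\kappa=4$ this is the classical $\Out(\mathbf{F}_2)\simeq GL(2,\mathbb{Z})$. In the remaining cases I would identify $\Inn(\mathbf{P}_n(\Sigma)_{/Z})$ inside the relevant mapping class group with the point-pushing subgroup: the identity $Push(f_*(\beta))=f^{-1}Push(\beta)f$ from the remark after Lemma~\ref{lem:kernel}, applied with $f=Push(\gamma)$ together with the injectivity of $Push$, shows $(Push(\gamma))_*=c_\gamma$, so inner automorphisms correspond to $Push$ of the pure part. The quotient is then handled by the Birman sequence~(\ref{eqn:braidMCG}): the homomorphism $\rho\times\pi$ from the mapping class group to $\mathbf{S}_m\times\mathcal{M}^*(\Sigma_g)$, with $\pi$ the Birman quotient, has kernel exactly $Push(\mathbf{P}_m(\Sigma_g)_{/Z})$ and is surjective, because $\rho$ already surjects from the braid group onto $\mathbf{S}_m$ and $\pi$ surjects by Birman; hence the quotient is the direct product $\mathbf{S}_m\times\mathcal{M}^*(\Sigma_g)$. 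For $\chi<-1$ one applies this to $\Sigma$ itself with its $n$ marked points and $p$ rigid punctures, producing $\mathbf{S}_n\times\mathcal{M}^*(\Sigma)$; for $\chi\ge-1$, $\kappa\ge5$ one applies it to the closed surface $\Sigma_g$ with $m=p+n$ marked points, producing $\mathbf{S}_{p+n}\times\mathcal{M}^*(\Sigma_g)$.

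The delicate point---the step I would treat most carefully---is the bookkeeping of the punctures in the range $\chi\ge-1$. Here the $p$ punctures cease to be topologically distinguishable from the $n$ configuration points: this is exactly the content of the isomorphism $\mathbf{P}_n(\Sigma_{g,p})_{/Z}\simeq\mathbf{P}_{p+n}(\Sigma_g)_{/Z}$ (both realized as $\mathbf{P}_{\kappa-3}(\Sigma')$ with $\Sigma'=\Sigma_{0,3}$ or $\Sigma_{1,1}$) underlying Theorem~\ref{thm:kappa5}, and it is what upgrades the symmetric factor from $\mathbf{S}_n$ to the full $\mathbf{S}_{p+n}$. One must verify that under the identification of $\Aut(\mathbf{P}_n(\Sigma)_{/Z})$ with $\mathcal{M}^*_{p+n}(\Sigma_g)$ the inner automorphisms genuinely match $Push(\mathbf{P}_{p+n}(\Sigma_g)_{/Z})$ for this enlarged marked-point set; granting this, the $\Out$ formulas follow, and the $\chi<-1$ line is the same argument with punctures kept rigid. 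The surjectivity of $\rho\times\pi$ and the direct (rather than semidirect) product structure of the symmetric and orientation factors are then automatic.
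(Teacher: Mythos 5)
Your proposal is correct and follows essentially the same route as the paper: the $\Aut$ lines are read off from the splitting over the transvection subgroup together with Lemma~\ref{lem:centerBraid}, Theorem~\ref{thm:kappa5} and Proposition~\ref{prop:AutPn}, and the $\Out$ lines come from identifying $\Inn$ with the point-pushing image of the pure braid group inside the relevant mapping class group and splitting the quotient via the induced permutation and the Birman exact sequence. Your single homomorphism $\rho\times\pi$ is a repackaging of the paper's commutative diagram built from the two triples, and your explicit reduction $\Out(\mathbf{P}_{n}(\Sigma))\simeq\tv(\mathbf{P}_{n}(\Sigma))\rtimes\Out(\mathbf{P}_{n}(\Sigma)_{/Z})$ makes precise a step the paper leaves implicit.
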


\begin{proof}
The only nontrivial statement is about $\Out(\mathbf{P}_n(\Sigma))$.
For $\kappa=4$, this statement follows from the facts that $\Inn(\mathbf{P}_{n}(\Sigma))\simeq\mathbf{F}_2\subset\Aut(\mathbf{F}_2)$ and $\Out(\mathbf{F}_2)\simeq GL(2,\mathbb{Z})$.

If $\kappa\ge 5$ and $\chi\ge -1$, then 
\[\Inn(\mathbf{P}_{n}(\Sigma))\simeq\mathbf{P}_{n}(\Sigma)_{/Z}\simeq\mathbf{P}_{n+p}(\Sigma_g)_{/Z}\subset\mathcal{M}^*_{n+p}(\Sigma_g)\]
by Lemma~\ref{lem:centerBraid}~(5) and the Birman exact sequence. 

Recall $\mathcal{PM}^*_{n+p}(\Sigma_g)$ which is the kernel of the induced permutation $\rho:\mathcal{M}^*_{n+p}(\Sigma_g)\to\mathbf{S}_{n+p}$.
Then we have two triples,
\[\mathbf{P}_{n+p}(\Sigma_g)_{/Z}\subset\mathbf{B}_{n+p}(\Sigma_g)_{/Z}\xrightarrow{Push}\mathcal{M}^*_{n+p}(\Sigma_g)\] and
\[\mathbf{P}_{n+p}(\Sigma_g)_{/Z}\xrightarrow{Push|}\mathcal{PM}^*_{n+p}(\Sigma_g)\subset\mathcal{M}^*_{n+p}(\Sigma_g)\]
which make the commutative diagram below.
\[
\xymatrix{
\mathbf{S}_{n+p} & \frac{\mathcal{M}^*_{n+p}(\Sigma_g)}{\mathcal{PM}^*_{n+p}(\Sigma_g)}\ar[l]_{\simeq}\\
\frac{\mathbf{B}_{n+p}(\Sigma_g)_{/Z}}{\mathbf{P}_{n+p}(\Sigma_g)_{/Z}}\ar@{^(->}[r]\ar[u]^{\simeq}
&\frac{\mathcal{M}^*_{n+p}(\Sigma_g)}{\mathbf{P}_{n+p}(\Sigma_g)_{/Z}}\ar@{->>}[r]\ar@{->>}[u]
&\frac{\mathcal{M}^*_{n+p}(\Sigma_g)}{\mathbf{B}_{n+p}(\Sigma_g)_{/Z}}\ar[d]^{\simeq}\\
&\frac{\mathcal{PM}^*_{n+p}(\Sigma_g)}{\mathbf{P}_{n+p}(\Sigma_g)_{/Z}}\ar@{^(->}[u]\ar[r]^{\simeq}&\mathcal{M}^*(\Sigma_g)
}
\]

The isomorphisms on the top and left come from the induced permutations, and those on the right and below come from the Birman exact sequence.
Hence all morphisms split and therefore
\[
\frac{\mathcal{M}^*_{n+p}(\Sigma_g)}{\mathbf{P}_{n+p}(\Sigma_g)_{/Z}}\simeq\mathbf{S}_{n+p}\times\mathcal{M}^*(\Sigma_g).
\]

If $\chi<-1$, $\Out(\mathbf{P}_n(\Sigma))\simeq \mathcal{M}^*_n(\Sigma)/\mathbf{P}_n(\Sigma)$ since $\Aut(\mathbf{P}_n(\Sigma))\simeq\mathcal{M}^*_n(\Sigma)$.
Then two triples 
\[
\mathbf{P}_n(\Sigma)\subset\mathbf{B}_n(\Sigma)\xrightarrow{Push}\mathcal{M}^*_n(\Sigma),\quad
\mathbf{P}_n(\Sigma)\xrightarrow{Push|}\mathcal{PM}_n^*(\Sigma)\subset\mathcal{M}^*_n(\Sigma)
\]
produce a similar commutative diagram as above, and therefore 
\[
\Out(\mathbf{P}_n(\Sigma))\simeq\mathbf{S}_n\times\mathcal{M}^*_n(\Sigma)
\]
as desired.
\end{proof}

Suppose that $\mathbf{P}_{n}(\Sigma)$ is a characteristic subgroup of $\mathbf{B}_{n}(\Sigma)$.
Then there is a canonical map $\Gamma:\Aut(\mathbf{B}_{n}(\Sigma))\to\Aut(\mathbf{P}_{n}(\Sigma))$ defined by restriction, and $(\cdot)_*|$ factors through $\Gamma$. That is,
\[
(\cdot)_*|=\Gamma\circ(\cdot)_*:\mathcal{M}^*_{n}(\Sigma)\to\Aut(\mathbf{B}_{n}(\Sigma))\to\Aut(\mathbf{P}_{n}(\Sigma)).
\]
Moreover, $\Gamma$ induces a map $\bar\Gamma:\Aut(\mathbf{B}_{n}(\Sigma)_{/Z})\to\Aut(\mathbf{P}_{n}(\Sigma)_{/Z})$ such that 
$\bar\Gamma\circ\Psi = \Phi\circ\Gamma$.

\begin{lem}\label{lem:vertInj}
Suppose $\mathbf{P}_{n}(\Sigma)$ is a characteristic subgroup of $\mathbf{B}_{n}(\Sigma)$ and $\Sigma\neq S^2$.
Then both $\Gamma$ and $\bar\Gamma$ are injective.
\end{lem}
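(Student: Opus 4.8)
The plan is to prove both injectivity statements by the same mechanism: an automorphism that restricts to the identity on the pure part can only modify a full braid by an element centralizing the pure braid group, so in each case it suffices to understand the relevant centralizer. Concretely, if $\psi\in\ker\Gamma$ then for $\beta\in\mathbf{B}_n(\Sigma)$ and $\alpha\in\mathbf{P}_n(\Sigma)$ we have $\beta\alpha\beta^{-1}\in\mathbf{P}_n(\Sigma)$, and applying $\psi$ (which fixes it) shows that $\beta^{-1}\psi(\beta)$ centralizes $\mathbf{P}_n(\Sigma)$; the same computation in the quotient handles $\bar\Gamma$. So the heart of the proof is to show these centralizers are trivial, or at least central and torsion-free.

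For $\Gamma$, I would first establish $C_{\mathbf{B}_n(\Sigma)}(\mathbf{P}_n(\Sigma))=Z(\mathbf{B}_n(\Sigma))$. The key point is that conjugation by any $\beta$ permutes the normal subgroups $\mathbf{U}_i$ according to the induced permutation, $\beta\mathbf{U}_i\beta^{-1}=\mathbf{U}_{\rho(\beta)(i)}$, exactly as in the proof of Lemma~\ref{lem:kernel}. If $c$ centralizes $\mathbf{P}_n(\Sigma)\supseteq\mathbf{U}_i$, then $c\mathbf{U}_ic^{-1}=\mathbf{U}_i$, so $\mathbf{U}_{\rho(c)(i)}=\mathbf{U}_i$ for all $i$; since the $\mathbf{U}_i$ are pairwise distinct by Lemma~\ref{lem:normalsubgroup}(2), this forces $\rho(c)=\mathrm{id}$, hence $c\in\mathbf{P}_n(\Sigma)$ and $c\in Z(\mathbf{P}_n(\Sigma))=Z(\mathbf{B}_n(\Sigma))$ by Lemma~\ref{lem:centerBraid}(1). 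Consequently, for $\psi\in\ker\Gamma$ the assignment $\beta\mapsto z_\beta:=\beta^{-1}\psi(\beta)\in Z(\mathbf{B}_n(\Sigma))$ is a homomorphism $\mathbf{B}_n(\Sigma)\to Z(\mathbf{B}_n(\Sigma))$ that kills $\mathbf{P}_n(\Sigma)$, so it factors through a homomorphism $\mathbf{S}_n\to Z(\mathbf{B}_n(\Sigma))$. This is where the hypothesis $\Sigma\neq S^2$ is used: by Lemma~\ref{lem:centerBraid}(2),(3) the center $Z(\mathbf{B}_n(\Sigma))$ is torsion-free (it is trivial, $\mathbb{Z}$, or $\mathbb{Z}^2$) whenever $\Sigma\neq S^2$, so any homomorphism from the finite group $\mathbf{S}_n$ is trivial. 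Hence $z_\beta=e$ for all $\beta$ and $\psi=\mathrm{Id}$.

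For $\bar\Gamma$, I would run the parallel argument in the centerless quotient, but replace the $\mathbf{U}_i$-bookkeeping by the embedding $Push\colon\mathbf{B}_n(\Sigma)_{/Z}\hookrightarrow\mathcal{M}^*_n(\Sigma)$. Setting $\bar C:=C_{\mathbf{B}_n(\Sigma)_{/Z}}(\mathbf{P}_n(\Sigma)_{/Z})$, we have $\bar C\subseteq C_{\mathcal{M}^*_n(\Sigma)}(\mathbf{P}_n(\Sigma)_{/Z})$, which by Lemma~\ref{lem:intersection} equals $Z(\mathcal{M}^*_n(\Sigma))$ and is trivial except for $(\Sigma,n)=(A,2)$ or $(T,2)$. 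The case $(A,2)$ is excluded by the characteristic hypothesis, being precisely the exception in Theorem~\ref{thm:Characteristic}, so only $(T,2)$ remains; there $Z(\mathcal{M}^*_2(T))=\langle s\rangle\simeq\mathbb{Z}_2$ is generated by the hyperelliptic involution. Since $s$ acts on $\pi_1(T)$ as $-I$ and is therefore nontrivial in $\mathcal{M}^*(T)$, the Birman sequence gives $s\notin\mathbf{B}_2(T)_{/Z}=\ker(\mathcal{M}^*_2(T)\to\mathcal{M}^*(T))$, so $\bar C=1$ in every remaining case. Then for $\bar\psi\in\ker\bar\Gamma$ the commutator computation yields $\bar\beta^{-1}\bar\psi(\bar\beta)\in\bar C=1$, whence $\bar\psi=\mathrm{Id}$.

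I expect the main obstacle to be precisely the $(T,2)$ case for $\bar\Gamma$: the centralizer of $\mathbf{P}_n(\Sigma)_{/Z}$ inside the \emph{full} mapping class group is genuinely nontrivial there, and the argument succeeds only because the offending hyperelliptic involution lies outside the braid image. The two supporting facts—that conjugation permutes the $\mathbf{U}_i$ by $\rho$, and that $Z(\mathbf{B}_n(\Sigma))$ is torsion-free exactly when $\Sigma\neq S^2$—are the other places requiring care, but both are immediate from the results already established in Lemma~\ref{lem:kernel} and Lemma~\ref{lem:centerBraid}.
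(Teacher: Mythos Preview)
Your argument is correct and follows essentially the same strategy as the paper: both start from the observation that $\phi(\beta)\beta^{-1}$ (resp.\ its image modulo center) centralizes the pure braid group, and then kill this element. The differences are minor. For $\Gamma$, the paper simply asserts $C_{\mathbf{B}_n(\Sigma)}(\mathbf{P}_n(\Sigma))=Z(\mathbf{P}_n(\Sigma))$ and then, rather than factoring $\beta\mapsto z_\beta$ through $\mathbf{S}_n$, takes $m$ with $\beta^m\in\mathbf{P}_n(\Sigma)$ to get $z_\beta^m=e$ and invokes torsion-freeness of the whole group $\mathbf{B}_n(\Sigma)$ for $\Sigma\neq S^2$; your homomorphism-to-$\mathbf{S}_n$ phrasing is equivalent (and your justification of the centralizer via the $\mathbf{U}_i$ is a welcome detail the paper omits). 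For $\bar\Gamma$, the paper just says ``the same argument'', whereas you make the extra step explicit by appealing to Lemma~\ref{lem:intersection} and checking that the hyperelliptic involution lies outside $\mathbf{B}_2(T)_{/Z}$; this is a cleaner way to see why the quotient case goes through. One small caution: to exclude $(A,2)$ you should cite Lemma~\ref{lem:notchar} directly rather than Theorem~\ref{thm:Characteristic}, since the latter is proved later and you want to avoid even the appearance of circularity.
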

\begin{proof}
Let $\phi\in\ker(\Gamma)$ and $\beta\in\mathbf{B}_{n}(\Sigma)$.
Then for any $\gamma\in\mathbf{P}_{n}(\Sigma)$, $\phi(\beta^{-1}\gamma\beta) =\beta^{-1}\gamma\beta$ because $\mathbf{P}_{n}(\Sigma)$ is a normal subgroup of $\mathbf{B}_{n}(\Sigma)$. Therefore 
\[
\phi(\beta)\beta^{-1}\in C_{\mathbf{B}_{n}(\Sigma)}(\mathbf{P}_{n}(\Sigma))=Z(\mathbf{P}_{n}(\Sigma)),
\]
and so there exists $z_\beta\in Z(\mathbf{P}_{n}(\Sigma))$ such that $\phi(\beta)=\beta z_\beta$.

This implies that $\phi(\beta)^m = \beta^m z_\beta^m = \beta^m$ for some $m\ge 1$ since $\mathbf{P}_{n}(\Sigma)$ is a finite index subgroup of $\mathbf{B}_{n}(\Sigma)$, and so $z_\beta^m=e$.
Since the braid group $\mathbf{B}_{n}(\Sigma)$ is torsion free unless $\chi=2$, the element $z_\beta$ must be trivial.
Therefore $\Gamma$ is injective. 

The injectivity of $\bar\Gamma$ follows from the same argument as above.
\end{proof}

\begin{cor}
Suppose $\mathbf{P}_{n}(\Sigma)$ is a characteristic subgroup of $\mathbf{B}_{n}(\Sigma)$ and $\chi<-1$. Then
\[
\Aut(\mathbf{B}_{n}(\Sigma))\simeq\mathcal{M}^*_{n}(\Sigma),\quad
\Out(\mathbf{B}_{n}(\Sigma))\simeq\mathcal{M}^*(\Sigma).
\]
\end{cor}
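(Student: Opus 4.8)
The plan is to derive the corollary as a short diagram chase from the factorization $(\cdot)_*|=\Gamma\circ(\cdot)_*$, using Proposition~\ref{prop:AutPn} and Lemma~\ref{lem:vertInj} as the two substantial inputs. First I would record the simplifications forced by $\chi<-1$. By Lemma~\ref{lem:centerBraid}~(3) the center $Z(\mathbf{B}_n(\Sigma))$ is trivial, so $\mathbf{B}_n(\Sigma)_{/Z}=\mathbf{B}_n(\Sigma)$ and $\mathbf{P}_n(\Sigma)_{/Z}=\mathbf{P}_n(\Sigma)$; consequently $\Psi$ and $\Phi$ are identity maps, $\Gamma=\bar\Gamma$, and both transvection subgroups vanish. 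Since $\chi(S^2)=2$, the hypothesis $\chi<-1$ forces $\Sigma\neq S^2$, so Lemma~\ref{lem:vertInj} applies and $\Gamma$ is injective; note that $\Gamma$ is well-defined in the first place precisely because $\mathbf{P}_n(\Sigma)$ is characteristic, which is our standing assumption.

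Next comes the core step. Proposition~\ref{prop:AutPn} gives $\Aut(\mathbf{P}_n(\Sigma))\simeq\mathcal{M}^*_n(\Sigma)$, and since this isomorphism is realized by the map $(\cdot)_*|$ of Lemma~\ref{lem:kernel}, which is injective, the map $(\cdot)_*|:\mathcal{M}^*_n(\Sigma)\to\Aut(\mathbf{P}_n(\Sigma))$ is in fact an isomorphism. Substituting into $(\cdot)_*|=\Gamma\circ(\cdot)_*$, the surjectivity of $(\cdot)_*|$ forces $\Gamma$ to be surjective; combined with injectivity from Lemma~\ref{lem:vertInj}, $\Gamma$ is an isomorphism. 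Then $(\cdot)_*=\Gamma^{-1}\circ(\cdot)_*|$ is a composite of isomorphisms, so $(\cdot)_*:\mathcal{M}^*_n(\Sigma)\to\Aut(\mathbf{B}_n(\Sigma))$ is an isomorphism, which proves the first assertion.

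For the statement about $\Out$, I would identify the inner automorphisms under this isomorphism. Applying the remark that $Push(f_*(\beta))=f^{-1}Push(\beta)f$ with $f=Push(\gamma)$, and using injectivity of $Push$, one sees that $(Push(\gamma))_*$ is the inner automorphism $c_\gamma$; hence $(\cdot)_*$ carries the normal subgroup $Push(\mathbf{B}_n(\Sigma)_{/Z})$ isomorphically onto $\Inn(\mathbf{B}_n(\Sigma))$. Passing to quotients and invoking the Birman exact sequence~(\ref{eqn:braidMCG}),
\[
\Out(\mathbf{B}_n(\Sigma))=\Aut(\mathbf{B}_n(\Sigma))/\Inn(\mathbf{B}_n(\Sigma))\simeq\mathcal{M}^*_n(\Sigma)/Push(\mathbf{B}_n(\Sigma)_{/Z})\simeq\mathcal{M}^*(\Sigma).
\]

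All the genuine content sits in the inputs Proposition~\ref{prop:AutPn} and Lemma~\ref{lem:vertInj}; granted these, the remaining reasoning is formal. The one point that demands care, and which I regard as the hinge of the argument, is the claim that the abstract isomorphism of Proposition~\ref{prop:AutPn} is implemented by the geometric map $(\cdot)_*|$: it is exactly this compatibility that lets the factorization transfer surjectivity from $(\cdot)_*|$ to $\Gamma$, and thereby collapse the injective-only conclusion of Lemma~\ref{lem:vertInj} into a full isomorphism.
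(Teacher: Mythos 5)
Your proposal is correct and follows essentially the same route as the paper: both arguments deduce surjectivity of $\Gamma$ from the fact that the isomorphism of Proposition~\ref{prop:AutPn} is realized by $(\cdot)_*|=\Gamma\circ(\cdot)_*$, combine this with the injectivity from Lemma~\ref{lem:vertInj}, and then obtain the $\Out$ statement from the Birman exact sequence. Your write-up merely makes explicit two points the paper leaves implicit, namely that $(\cdot)_*|$ itself is the isomorphism of Proposition~\ref{prop:AutPn} and that $(\cdot)_*$ carries $Push(\mathbf{B}_n(\Sigma)_{/Z})$ onto $\Inn(\mathbf{B}_n(\Sigma))$.
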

\begin{proof}
Notice that for a generic case, $(\cdot)_*|=\Gamma\circ(\cdot)_*$ is an isomorphism by Proposition~\ref{prop:AutPn}, and therefore $\Gamma$ should be surjective.

On the other hand, by Lemma~\ref{lem:vertInj}, $\Gamma$ is also injective. Hence for a generic case, $\Gamma$ becomes an isomorphism, and so $\Aut(\mathbf{B}_{n}(\Sigma))\simeq\mathcal{M}^*_{n}(\Sigma)$.
The outer automorphism group follows directly from the Birman exact sequence.
\end{proof}

Therefore, this corollary together with Theorem~\ref{thm:Characteristic} is the second half of Theorem~\ref{thm:AutPn}. 
After proving Proposition~\ref{prop:AutPn} and Theorem~\ref{thm:Characteristic}, we will consider all exceptional cases as well in the final section.

\section{Proof of Proposition~\ref{prop:AutPn}}

In this section, we assume $\chi< -1$ and $n\ge 2$, and identify $\mathbf{P}_{n}(\Sigma)$ with a subgroup of $\mathcal{M}^*_{n}(\Sigma)$ via $Push$.
Furthermore, let $\phi$ be an automorphism on $\mathbf{P}_{n}(\Sigma)$, and we denote by $\bar\phi$ the induced automorphism on the abelianization $H_1(\mathbf{P}_{n}(\Sigma))$.

\subsection{Outline of the proof}
Each $\mathbf{U}_i$ plays an essential role in the proof of Theorem~\ref{thm:AutPn} because the action of $\phi$ on $\mathbf{P}_{n}(\Sigma)$ is determined by how $\phi$ acts on $\mathbf{U}_i$. 
More precisely, we have the following lemma which is essentially the same as Proposition~11.3 in \cite{Iv1} and Theorem~3 in \cite{Zh1}.

\begin{lem}\label{lem:AutUi}
Suppose that there exists $1\le i\le n$ such that $\phi(\mathbf{U}_i)=\mathbf{U}_i$.
Then $\phi|_{\mathbf{U}_i}=Id_{\mathbf{U}_i}$ if and only if $\phi$ is the identity.
\end{lem}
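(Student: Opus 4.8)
The plan is to reduce the converse direction to the centralizer computation of Lemma~\ref{lem:normalsubgroup}(4) combined with the triviality of the center in the regime $\chi<-1$. One implication is immediate: if $\phi=Id$, then obviously $\phi|_{\mathbf{U}_i}=Id_{\mathbf{U}_i}$. All the content lies in showing that fixing $\mathbf{U}_i$ pointwise forces $\phi$ to be globally trivial.

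First I would exploit the fact that $\mathbf{U}_i$ is normal in $\mathbf{P}_n(\Sigma)$, being the kernel of the forgetful map $q_{i*}$. Fix an arbitrary $\beta\in\mathbf{P}_n(\Sigma)$ and $u\in\mathbf{U}_i$. Normality gives $\beta^{-1}u\beta\in\mathbf{U}_i$, so the hypothesis $\phi|_{\mathbf{U}_i}=Id_{\mathbf{U}_i}$ forces $\phi(\beta^{-1}u\beta)=\beta^{-1}u\beta$. On the other hand, expanding through the homomorphism property and using $\phi(u)=u$ yields $\phi(\beta^{-1}u\beta)=\phi(\beta)^{-1}u\phi(\beta)$. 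Comparing the two expressions gives $\phi(\beta)^{-1}u\phi(\beta)=\beta^{-1}u\beta$ for every $u\in\mathbf{U}_i$, which rearranges to the statement that $\beta\phi(\beta)^{-1}$ commutes with every element of $\mathbf{U}_i$; that is, $\beta\phi(\beta)^{-1}\in C_{\mathbf{P}_n(\Sigma)}(\mathbf{U}_i)$.

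Next I would invoke Lemma~\ref{lem:normalsubgroup}(4), which identifies $C_{\mathbf{P}_n(\Sigma)}(\mathbf{U}_i)$ with $Z(\mathbf{P}_n(\Sigma))$, so $\beta\phi(\beta)^{-1}\in Z(\mathbf{P}_n(\Sigma))$. Since the standing assumption places us in the range $\chi<-1<0$, Lemma~\ref{lem:centerBraid}(1) and (3) give $Z(\mathbf{P}_n(\Sigma))=Z(\mathbf{B}_n(\Sigma))=1$, whence $\beta\phi(\beta)^{-1}=e$, i.e.\ $\phi(\beta)=\beta$. As $\beta$ was arbitrary, $\phi=Id$.

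The argument is short, and there is no serious obstacle beyond keeping the conjugation bookkeeping straight; the essential inputs are all prepared in advance. The one point deserving care is verifying that it is $\beta\phi(\beta)^{-1}$ (rather than $\phi(\beta)\beta^{-1}$) that lands in the centralizer, and confirming that the hypotheses really do place us in the centerless case, so that the centralizer—equal to the center—collapses to the trivial group. I would remark that in the closed-surface analogues of Ivanov and Zhang the relevant center can be nontrivial, which is why those statements require more work, whereas the assumption $\chi<-1$ removes that complication entirely.
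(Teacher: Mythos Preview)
Your proof is correct and follows essentially the same route as the paper: use normality of $\mathbf{U}_i$ to show $\beta\phi(\beta)^{-1}\in C_{\mathbf{P}_n(\Sigma)}(\mathbf{U}_i)$, then invoke Lemma~\ref{lem:normalsubgroup}(4) and the triviality of the center under $\chi<-1$ to conclude $\phi(\beta)=\beta$. Your worry about $\beta\phi(\beta)^{-1}$ versus $\phi(\beta)\beta^{-1}$ is moot, since the centralizer is a subgroup and hence contains one iff it contains the other.
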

\begin{proof} The proof is similar to Lemma~\ref{lem:vertInj}.

Let $\beta\in\mathbf{P}_{n}(\Sigma)$ and $\gamma\in\mathbf{U}_i$.
Since $\mathbf{U}_i$ is normal in $\mathbf{P}_{n}(\Sigma)$, $\phi(\beta^{-1}\gamma\beta)=\beta^{-1}\gamma\beta,$
and so $\phi(\beta)\beta^{-1}\in C_{\mathbf{P}_{n}(\Sigma)}(\mathbf{U}_i)=Z(\mathbf{P}_{n}(\Sigma))=1$ by Lemma~\ref{lem:normalsubgroup}~(4).
Therefore $\phi(\beta)=\beta$.
\end{proof}

Recall the generating sets $X_i$ for $\mathbf{U}_i$, and their union $X=\cup_i X_i$.
We split $X_i$ into the following three sets
\[
X_i(1)=\{A_{i,j}|j\neq i\},\quad X_i(2)=\{\zeta_{i,t}|1\le t\le p\},\quad X_i(3)=\{a_{i,r}, b_{i,r}|1\le r\le g\}
\]
and let $X(k)=\bigcup_i X_i(k)$.

We define
\[
X_i^*(k)=\left\{f_*(x)|x\in X_i(k)\cup X_i^{-1}(k), f\in\mathcal{PM}_n^*(\Sigma)\right\},
\]
\[
X_i^*=\bigcup_k X_i^*(k),\quad X^*(k)=\bigcup_i X^*_i(k),\quad
X^*=\bigcup_i X_i^*.
\]

We say that a braid $\beta\in\mathbf{P}_n(\Sigma)$ is {\em of type $k$} if $\beta\in X^*(k)$, and that $\phi$ is 
\begin{enumerate}
\item {\em pre-geometric} if $\phi\left(X\right)\subset X^*$;
\item {\em almost-geometric} if $\phi\left(X_i\right) \subset X^*_j$ for some $i$ and $j$;
\item {\em type-preserving} if $\phi\left(X(k)\right)\subset X^*(k)$ for each $k$;
\item {\em geometric} if $\phi$ is type-preserving and almost-geometric.
\end{enumerate}

These definitions can be used to restate Theorem~\ref{thm:peripheral} as follows.
\begin{thm}\label{thm:peripheral2}
There exists $f\in\mathcal{M}^*_{n}(\Sigma)$ realizing $\phi$ if and only if both $\phi$ and $\phi^{-1}$ are geometric.
\end{thm}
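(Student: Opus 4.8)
The plan is to reduce the whole statement to a single fiber and then invoke the Dehn--Nielsen--Baer criterion of Theorem~\ref{thm:peripheral}. The key is the dictionary between the combinatorial notions just defined and the peripheral structure of the fiber surface. Recall that $\mathbf{U}_i=\pi_1(\Sigma_{\hat i},z_i)$ is the free fundamental group of $\Sigma_{\hat i}$, whose punctures come in two flavours: the $n-1$ forgotten marked points $z_k$ ($k\neq i$) and the $p$ genuine punctures $p_t$. In these terms $X_i(1)$ and $X_i(2)$ together form a complete system of \emph{peripheral} loops of $\Sigma_{\hat i}$ (around the $z_k$ and around the $p_t$ respectively), while $X_i(3)$ is a system of non-peripheral handle generators. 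Since each $X^*_j(k)$ is \emph{defined} to be the full $\mathcal{PM}^*_n(\Sigma)$-orbit of the standard type-$k$ generators (and inverses) based at $z_j$, membership in $X^*_j(k)$ is exactly the freedom of choosing a representative loop of the prescribed type. Thus ``$\phi$ is geometric'' encodes precisely: $\phi|_{\mathbf{U}_i}$ carries peripheral loops to peripheral loops, respects the marked-point/puncture dichotomy (type $1$ versus type $2$), and sends handles to handles. Requiring this of both $\phi$ and $\phi^{-1}$ makes it a \emph{bijection} of peripheral structures, which is the hypothesis of Theorem~\ref{thm:peripheral} refined so that the realizing homeomorphism does not mix $\mathbf{z}$ with $\mathbf{p}$.

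For the forward implication, I would assume $\phi=f_*$ with $f\in\mathcal{M}^*_n(\Sigma)$ and set $\pi=\rho(f)$. Factoring $f$ through the pure part as $f=hf_0$, where $f_0$ is a fixed \emph{standard} homeomorphism realizing the permutations $\rho(f),\mu(f)$ and the orientation behaviour of $f$, chosen so that $(f_0)_*$ carries each $x\in X_i(k)$ to a standard generator $(f_0)_*(x)\in X_{\pi(i)}(k)\cup X_{\pi(i)}^{-1}(k)$, and $h\in\mathcal{PM}^*_n(\Sigma)$. Then $\phi(x)=f_*(x)$ is the image of the standard type-$k$ generator $(f_0)_*(x)$ under the pure mapping class $h$, so it lies in $X^*_{\pi(i)}(k)$ by the very definition of the latter. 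Hence $\phi$ is type-preserving and almost-geometric, i.e. geometric, and the same argument applied to $f^{-1}$ shows $\phi^{-1}$ is geometric.

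For the converse, suppose $\phi$ and $\phi^{-1}$ are geometric. Almost-geometricity gives $\phi(X_i)\subset X^*_j\subset\mathbf{U}_j$ for some pair $(i,j)$, hence $\phi(\mathbf{U}_i)\subseteq\mathbf{U}_j$. I would upgrade this inclusion to an equality by noting that $q_{j*}\circ\phi$ kills $\mathbf{U}_i$, hence factors as $\psi\circ q_{i*}$ with $\psi$ a surjective endomorphism of $\mathbf{P}_{n-1}(\Sigma)$; since $\mathbf{P}_{n-1}(\Sigma)$ is Hopfian, $\psi$ is an isomorphism and therefore $\phi(\mathbf{U}_i)=\mathbf{U}_j$. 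The restriction $\phi|_{\mathbf{U}_i}\colon\pi_1(\Sigma_{\hat i},z_i)\to\pi_1(\Sigma_{\hat j},z_j)$ is then an isomorphism which, by the dictionary of the first paragraph applied to both $\phi$ and $\phi^{-1}$, preserves the peripheral structure and matches the two puncture types. Identifying $\Sigma_{\hat j}$ with $\Sigma_{\hat i}$ and applying Theorem~\ref{thm:peripheral}, I obtain a homeomorphism $\hat f$ realizing $\phi|_{\mathbf{U}_i}$; because $\hat f$ permutes $z$-punctures among themselves and $p$-punctures among themselves, it extends across the forgotten marked points to an element $f\in\mathcal{M}^*_n(\Sigma)$ with $\rho(f)(i)=j$ and $f_*|_{\mathbf{U}_i}=\phi|_{\mathbf{U}_i}$. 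Finally $f_*^{-1}\phi$ preserves $\mathbf{U}_i$ and fixes it pointwise, so Lemma~\ref{lem:AutUi} yields $f_*^{-1}\phi=Id$, that is $\phi=f_*$.

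The main obstacle is the careful justification of the dictionary in the first paragraph: that the $\mathcal{PM}^*_n(\Sigma)$-orbit in the definition of $X^*_i(k)$ captures exactly the isotopy freedom of a peripheral representative (for $k=1,2$) or a handle representative (for $k=3$), so that ``type-preserving plus almost-geometric for $\phi$ and $\phi^{-1}$'' is genuinely equivalent to a type-respecting bijection of peripheral structures. A secondary point requiring care is the construction, in the forward direction, of the standard homeomorphism $f_0$ acting on the chosen generators as a signed permutation---in particular its effect on the type-$3$ generators under orientation reversal---together with the verification that filling in the marked points sends $\hat f$ to a bona fide element of $\mathcal{M}^*_n(\Sigma)$ inducing $\phi$ on $\mathbf{U}_i$. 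Both are geometric but routine, and I would isolate them as preliminary observations before assembling the two implications.
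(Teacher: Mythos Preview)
Your proof follows the paper's architecture: restrict to a single fibre $\mathbf{U}_i$, apply Dehn--Nielsen (Theorem~\ref{thm:peripheral}) there, and then globalise via Lemma~\ref{lem:AutUi}. Two tactical differences are worth flagging. For the equality $\phi(\mathbf{U}_i)=\mathbf{U}_j$, the paper simply uses almost-geometricity of $\phi^{-1}$ to produce a matching index pair $\phi^{-1}(X_j)\subset X_i^*$ (leaving the compatibility of indices implicit), whereas your Hopfian argument through the factorisation $q_{j*}\circ\phi=\psi\circ q_{i*}$ is an independent and arguably cleaner route that sidesteps that issue entirely. For the verification that $\phi|_{\mathbf{U}_i}$ preserves the peripheral structure, the paper passes to $H_1(\mathbf{U}_i)$ and uses the single relation \eqref{eqn:abUi} to force simultaneously that the induced assignments $\rho,\mu$ on punctures are genuine permutations \emph{and} that all the $\pm$ signs agree; you instead extract the bijection on peripheral classes from geometricity of both $\phi$ and $\phi^{-1}$, which is correct, but you should still record that the sign consistency---needed so that a single realizing homeomorphism exists, globally orientation-preserving or globally orientation-reversing---is not automatic from bijectivity alone and is exactly what \eqref{eqn:abUi} provides. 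That is the one place where your ``dictionary'' is not quite self-sufficient and the paper's homological step does real work.
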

\begin{proof}
Suppose that $f$ realizes $\phi$. Then $\phi$ is almost-geometric since $f$ transforms the peripheral structure of $\Sigma_{\hat i} = \Sigma\setminus\mathbf{z}\cup\{z_i\}$ to that of $\Sigma_{\hat j}$ for any $i$ and $j=\rho(f)(i)$.
Moreover, $\phi$ is type-preserving since $f$ preserves the geometric properties which determine the types.

Suppose that both $\phi$ and $\phi^{-1}$ are almost-geometric. Then there exist $i$ and $j$ such that $\phi(X_i)\subset X_j^*$ and $\phi^{-1}(X_j)\subset X_i^*$. Therefore $\phi(\mathbf{U}_i)=\mathbf{U}_j$.
Let $\phi_i=\phi|_{\mathbf{U}_i}$ and $\bar\phi_i:H_1(\mathbf{U}_i)\to H_1(\mathbf{U}_j)$ be the induced isomorphism.
Note that $H_1(\mathbf{U}_i)$ is a free abelian group of rank $\kappa-2$ generated by the homology classes $[x]$ for $x\in X_i$ with a single defining relator
\begin{equation}\label{eqn:abUi}\tag{HPTR}
[A_{1,i}]+\dots+[A_{i-1,i}]+[A_{i,i+1}]+\dots+[A_{i,n}]+\sum_{t=1}^p [\zeta_{i,t}]=0
\end{equation}
coming from (\ref{eqn:ptr}).

Since $\phi$ is type-preserving, $\bar\phi_i([A_{i,\ell}])=\pm[A_{j, \rho(\ell)}]$ and $\bar\phi_i([\zeta_{i,t}])=\pm[\zeta_{j,\mu(t)}]$ for some $\rho(\ell)$ and $\mu(t)$.
However (\ref{eqn:abUi}) forces all signs to be the same, and actually $\rho$ and $\mu$ define permutations on $\mathbf{z}$ and $\mathbf{p}$.
Therefore $\phi_i$ transforms the peripheral structures of $\mathbf{U}_i$ to those of $\mathbf{U}_j$ without any change of types, and so by Theorem~\ref{thm:peripheral}, there exists a homeomorphism $f$ on $(\Sigma,\mathbf{z})$ realizing $\phi_i$, in other words, 
$f_*|_{\mathbf{U}_i}=\phi_i$.

Consider $\phi f_*^{-1}\in\Aut(\mathbf{P}_n(\Sigma))$. Then $\phi f_*^{-1}|_{\mathbf{U}_i}=Id_{\mathbf{U}_i}$ and so $\phi = f_*$ by Lemma~\ref{lem:AutUi}.
\end{proof}

We claim three propositions as follows. The first two are easier to prove than the last one.
\begin{prop}\label{prop:UiPres}
If $\phi$ is pre-geometric, then $\phi$ is almost-geometric.
\end{prop}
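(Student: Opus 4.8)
The plan is to reduce ``almost-geometric'' to a single subgroup inclusion $\phi(\mathbf{U}_i)\subseteq\mathbf{U}_j$, and then to locate the correct index $j$ using Goldberg's product map. As a first step I would record the characterization $X^*\cap\mathbf{U}_j=X_j^*$. Indeed, every $f\in\mathcal{PM}^*_n(\Sigma)$ satisfies $f_*(\mathbf{U}_k)=\mathbf{U}_k$, so each element of $X_k^*$ lies in $\mathbf{U}_k$, and since $\mathbf{U}_j=\ker q_{j*}$ the membership of a given element of $X^*$ in $\mathbf{U}_j$ is detected by the forgetting map $q_{j*}$. The only elements simultaneously lying in two distinct subgroups $\mathbf{U}_k,\mathbf{U}_l$ are the $A$-type elements $A_{k,l}=A_{l,k}$, which already belong to both $X_k^*$ and $X_l^*$, so no inconsistency arises. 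Consequently, once I have shown that all of $\phi(X_i)$ lies in a common $\mathbf{U}_j$, pre-geometricity $\phi(X_i)\subseteq X^*$ upgrades this to $\phi(X_i)\subseteq X^*\cap\mathbf{U}_j=X_j^*$, which is precisely the assertion of almost-geometricity.

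The core of the proof is therefore to show that, for a fixed $i$, the generators $\phi(X_i)$ cannot be distributed among several distinct $\mathbf{U}_k$. Here I would invoke Theorem~\ref{thm:Goldberg}: the embedding-induced map $\iota_*\colon\mathbf{P}_n(\Sigma)\to\prod_{k=1}^n\pi_1(\Sigma,z_k)$ is surjective, kills every $A_{i,j}$, and carries $\mathbf{U}_k$ onto the single factor $\pi_1(\Sigma,z_k)$. Since each $\phi(x)\in X^*$ lies in exactly one $\mathbf{U}_k$, its image $\iota_*\phi(x)$ is supported in a single coordinate (and is trivial precisely when $\phi(x)$ has type $1$). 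As elements supported in distinct coordinates commute, the subgroup $\iota_*\phi(\mathbf{U}_i)$ is the internal direct product $\prod_k H_k$, where $H_k\le\pi_1(\Sigma,z_k)$ is generated by the coordinate-$k$ images. Moreover $\phi(\mathbf{U}_i)\trianglelefteq\mathbf{P}_n(\Sigma)$ and $\iota_*$ is onto, so $\prod_k H_k$ is normal in $\prod_k\pi_1(\Sigma,z_k)$; hence each $H_k$ is normal in $\pi_1(\Sigma,z_k)$ and, being finitely generated inside a nonabelian free or closed surface group (as $\chi<-1$), is either trivial or of finite index by Greenberg's theorem and its surface-group analogue.

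To force a single nontrivial factor I would feed the defining relation \eqref{eqn:ptr} of $\mathbf{U}_i$ into this picture: applying $\iota_*\phi$ to the image under $\phi$ of (PTR) and projecting to each coordinate produces relations among the $H_k$. The crucial observation is that a handle commutator $[b_{i,r}^{-1},a_{i,r}]$ whose two entries land in different coordinates projects trivially to every coordinate, so a genuine surface relation can be realized in a coordinate only when the relevant handle and puncture generators are concentrated there. Combined with the trivial-or-finite-index dichotomy and the free rank $\kappa-2$ of $\mathbf{U}_i$, this should pin all type-$2$ and type-$3$ images to one coordinate $j$. The remaining type-$1$ images, being invisible to $\iota_*$, would then be placed into $\mathbf{U}_j$ by re-running the argument through $q_{j*}$ and using $C_{\mathbf{P}_n(\Sigma)}(\mathbf{U}_j)=1$ from Lemma~\ref{lem:normalsubgroup}~(4).

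The main obstacle I anticipate is exactly this concentration step. The soft invariants --- first homology, cohomological dimension, and the classification of normal subgroups of the product --- do not by themselves exclude a two-coordinate spread once $n$ is large relative to $g$ and $p$, since then two finite-index factors $H_k$ are still compatible with the rank bound $b_1(\iota_*\phi(\mathbf{U}_i))\le\kappa-2$. The argument must therefore genuinely exploit the explicit relation (PTR) together with the fact that the elements of $X^*$ are \emph{primitive peripheral and handle loops} rather than arbitrary group elements. Extracting a contradiction from the projected (PTR) equations in the spread case, while simultaneously keeping control of the $\iota_*$-invisible type-$1$ generators, is where I expect the real work to lie.
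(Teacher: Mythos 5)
Your reduction is sound: the identity $X^*\cap\mathbf{U}_j=X_j^*$ does hold (detected by the forgetting maps $q_{j*}$), so almost-geometricity would indeed follow from pre-geometricity once all of $\phi(X_i)$ is shown to lie in a single $\mathbf{U}_j$. The structural setup via Goldberg's map is also correct as far as it goes: $\iota_*\phi(\mathbf{U}_i)$ is an internal direct product $\prod_k H_k$ with each $H_k$ finitely generated and normal, hence trivial or of finite index. But the entire content of the proposition is the concentration step --- ruling out two or more nontrivial factors --- and you explicitly leave that step as an anticipated difficulty rather than carrying it out. As your own rank computation shows, the trivial-or-finite-index dichotomy together with $b_1\le\kappa-2$ is compatible with a multi-coordinate spread once $n$ is large, so nothing in the proposal actually closes the argument. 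The further plan to recover the $\iota_*$-invisible type-$1$ images by ``re-running the argument through $q_{j*}$'' is too vague to assess; note that for $g=0$ these constitute $n-1$ of the $n+p-1$ generators of $X_i$, so they are not a residual case.

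For comparison, the paper avoids projecting (PTR) altogether. For $g=0$ it works entirely inside $\mathbf{P}_n(\Sigma)$: proper subsets of $X_i$ are strongly free by the homological criterion (Lemma~\ref{lem:strfree}), and Lemma~\ref{lem:freesubgroup} shows --- using the explicit lantern-type relations $A_{1,2}A_{3,1}A_{2,3}=(\sigma_1\sigma_2)^3$ and $A_{1,2}\zeta_{2,t}\zeta_{1,t}=\zeta_t\sigma_1\zeta_t\sigma_1$, which witness failures of strong freeness --- that the ends of the images of any strongly free triple share a common point; a counting argument ($|X_i|=p+n-1>n+1$) then forces that point to be a marked point $z_j$ rather than a puncture. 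For $g\ge1$ the paper does use Goldberg's map, but only after first establishing type-preservation (so that $\phi$ descends to $\hat\phi$ on $\prod_k\pi_1(\Sigma,z_k)$), and the concentration is then immediate from non-commutativity: two generators of $X_i(2)\cup X_i(3)$ with distinct ends have non-commuting images in $\pi_1(\Sigma,z_i)$, so their $\hat\phi$-images cannot sit in different coordinates. If you want to salvage your route, this pairwise non-commutation observation is the missing idea that replaces the projected-(PTR) analysis; but for $g=0$ you would still face the circularity that $\hat\phi$ is only defined once you know type-$1$ elements map to type-$1$ elements, which in the paper is deduced \emph{from} almost-geometricity, not before it.
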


\begin{prop}\label{prop:typepres}
If $\phi$ is pre-geometric, then $\phi$ is type-preserving.
\end{prop}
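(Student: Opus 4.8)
The plan is to show that for every generator $x\in X(k)$ the element $\phi(x)$, which lies in $X^*$ by pre-geometry, in fact lies in $X^*(k)$. First I would promote pre-geometry to a statement about the subgroups $\mathbf{U}_i$. Every element of $X_j^*$ lies in $\mathbf{U}_j$ (the type-$1$ members $f_*(A_{j,\ell}^{\pm1})$ lie in $\mathbf{U}_j\cap\mathbf{U}_\ell$, while the type-$2$ and type-$3$ members lie in $\mathbf{U}_j$ alone), so an inclusion $\phi(X_i)\subset X_j^*$ gives $\phi(\mathbf{U}_i)\subset\mathbf{U}_j$. Applying Proposition~\ref{prop:UiPres} to $\phi$ and, via bijectivity, to $\phi^{-1}$, I conclude that $\phi$ carries each $\mathbf{U}_i$ into some $\mathbf{U}_j$ and hence permutes the collection: $\phi(\mathbf{U}_i)=\mathbf{U}_{\sigma(i)}$ for a permutation $\sigma\in\mathbf{S}_n$. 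The problem then splits into two independent separations: isolating type $1$ from types $2,3$, and distinguishing type $2$ from type $3$.

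For the first separation I would use the membership pattern of a generator among the $\mathbf{U}_i$. A direct check with the forgetting maps $q_{k*}$ shows that $A_{i,j}$ dies exactly under $q_{i*}$ and $q_{j*}$, so a type-$1$ element lies in \emph{exactly two} of the subgroups $\mathbf{U}_i$, whereas each $\zeta_{i,t}$, $a_{i,r}$, $b_{i,r}$ lies in \emph{exactly one}. Since $f\in\mathcal{PM}^*_n(\Sigma)$ satisfies $f_*(\mathbf{U}_i)=\mathbf{U}_i$, every element of $X^*(k)$ inherits the membership count of the corresponding standard generator. As $\phi$ carries $\{\mathbf{U}_i\}$ to itself bijectively, it preserves the number of $\mathbf{U}_i$ containing a given element; hence $\phi(X(1))\subset X^*(1)$ and $\phi(X(2)\cup X(3))\subset X^*(2)\cup X^*(3)$.

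It remains to separate the genus generators (type $3$) from the puncture generators (type $2$) among the elements lying in a single $\mathbf{U}_i=\pi_1(\Sigma_{\hat i})$. Here I would pass to the quotient by the type-$1$ part: by Theorem~\ref{thm:Goldberg} the map $\iota_*$ realizes $\mathbf{P}_n(\Sigma)/\langle\!\langle A_{i,j}\rangle\!\rangle\simeq\prod_i\pi_1(\Sigma,z_i)$, and the first separation shows $\phi$ preserves $\langle\!\langle A_{i,j}\rangle\!\rangle$, so it descends to a factor-permuting automorphism $\bar\phi$ of this product. In each factor the puncture classes are peripheral and the genus classes are not, and the genuine distinction between them is the \emph{intersection form}; the key point is that this form is visible inside $\mathbf{P}_n(\Sigma)$ before the quotient, through the relation (\ref{eqn:pscr}), which writes the nontrivial type-$1$ element $A_{i,i+1}$ as the cross-strand commutator $[b_{i,r},a_{i+1,r}^{-1}]$ of two genus generators on adjacent strands. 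I would therefore define, on the single-$\mathbf{U}$ generators modulo type $1$, a pairing recording the type-$1$ commutators $[\,\cdot\,,\,\cdot\,]\in\mathbf{U}_i\cap\mathbf{U}_j$ across distinct strands, observe that it is nondegenerate precisely on the span of the genus classes (the symplectic pairing $a_r\leftrightarrow b_r$) and degenerate on the puncture classes, and check that it is respected by $\phi$ because $\phi$ preserves both the type-$1$ subgroup and the family $\{\mathbf{U}_i\cap\mathbf{U}_j\}$. Its radical then consists exactly of the type-$2$ directions, forcing $\phi(X(2))\subset X^*(2)$ and $\phi(X(3))\subset X^*(3)$.

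The main obstacle is exactly this last separation. Distinguishing a handle from a puncture cannot be done inside the free group $\mathbf{U}_i$ alone — its cup product vanishes, so an abstract automorphism of $\pi_1(\Sigma_{\hat i})$ need not preserve the peripheral structure — and the distinction must be extracted from the way the subgroups $\mathbf{U}_i$ interlock through (\ref{eqn:pscr}). Making the cross-strand commutator pairing genuinely automorphism-invariant, and verifying that its radical is precisely the puncture classes (equivalently, that the normal closure of the $\zeta_t$ is characteristic, so that filling the punctures to reach $\mathbf{P}_n(\Sigma_g)$ is a canonical operation), is the technical heart of the argument. I note that this difficulty appears only when both $g\ge1$ and $p\ge1$: when $g=0$ there are no type-$3$ generators and when $p=0$ there are no type-$2$ generators, so in those ranges the first separation already completes the proof.
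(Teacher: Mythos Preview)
Your first separation (type $1$ versus types $2,3$) via the membership count in the $\mathbf{U}_i$'s relies on Proposition~\ref{prop:UiPres} to know that $\phi$ permutes the family $\{\mathbf{U}_i\}$. For $g=0$ this is fine and essentially matches the paper. But for $g\ge 1$ it is circular: in the paper's development, the proof of Proposition~\ref{prop:UiPres} for $g\ge 1$ explicitly \emph{uses} Proposition~\ref{prop:typepres} (both to construct the induced map $\hat\phi$ on $\prod_i\pi_1(\Sigma,z_i)$ and later to know that $\phi$ sends type-$1$ to type-$1$ and type-$3$ to type-$3$). So invoking Proposition~\ref{prop:UiPres} inside the proof of Proposition~\ref{prop:typepres} begs the question. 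The paper avoids this by doing the type-$1$ separation for $g\ge 1$ directly and homologically: by (\ref{eqn:pscr}) each $A_{i,j}$ lies in the commutator subgroup, while every type-$2$ or type-$3$ generator survives in $H_1(\mathbf{P}_n(\Sigma))$; since the commutator subgroup is characteristic and $\phi(x)\in X^*$ by pre-geometry, this already forces $\phi(X(1))\subset X^*(1)$ and $\phi(X(2)\cup X(3))\subset X^*(2)\cup X^*(3)$, with no appeal to Proposition~\ref{prop:UiPres}. (Two smaller wrinkles in your use of Proposition~\ref{prop:UiPres}: as stated it only asserts $\phi(X_i)\subset X_j^*$ for \emph{some} $i,j$, not for every $i$; and applying it to $\phi^{-1}$ assumes $\phi^{-1}$ is pre-geometric, which is not in the hypothesis of Proposition~\ref{prop:typepres}.)

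Your second separation via a ``cross-strand commutator pairing'' points at the right mechanism---(\ref{eqn:pscr}) is indeed what distinguishes handles from punctures---but as written it is a sketch rather than a proof: the pairing is not precisely defined, its $\phi$-invariance is asserted rather than checked, and identifying its radical with the puncture classes is exactly the content you are trying to establish. The paper implements the same underlying idea more directly and without any pairing formalism: if $x$ is of type $3$ and $\phi(x)$ were of type $2$, pick $y$ of type $3$ with $[x,y]\in X^*(1)$ via (\ref{eqn:pscr}); then $\phi([x,y])\in X^*(1)$ by the first separation, yet the puncture-filling quotient $q:\mathbf{P}_n(\Sigma)\to\mathbf{P}_n(\Sigma_g)$ of Corollary~\ref{cor:quotient} kills $\phi(x)$ and hence kills $[\phi(x),\phi(y)]=\phi([x,y])$, contradicting $\ker q\cap X^*(1)=\emptyset$. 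This uses only that type-$2$ elements lie in $\ker q$ and type-$1$ elements do not---there is no need to verify that $\langle\!\langle\zeta_t\rangle\!\rangle$ is characteristic.
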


\begin{prop}\label{prop:scc}
$\phi$ is pre-geometric.
\end{prop}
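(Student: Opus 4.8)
The plan is to attach to each element $g\in\mathbf{P}_n(\Sigma)$ a purely algebraic invariant that is manifestly preserved by $\phi$ and that detects membership in $X^*$. Since $\chi<-1$, the center $Z(\mathbf{P}_n(\Sigma))$ is trivial by Lemma~\ref{lem:centerBraid}~(3), so there is no ambient central noise to account for. For any $g$, the automorphism $\phi$ restricts to an isomorphism $C_{\mathbf{P}_n(\Sigma)}(g)\to C_{\mathbf{P}_n(\Sigma)}(\phi(g))$ of centralizers, and hence to an isomorphism of their centers $Z(C_{\mathbf{P}_n(\Sigma)}(g))\to Z(C_{\mathbf{P}_n(\Sigma)}(\phi(g)))$. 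In particular the isomorphism type, and a fortiori the rank, of the center of the centralizer of $g$ is an invariant of $g$ under $\phi$.

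First I would record the geometric meaning of the generators: via $Push$, each $x\in X$ is the push of the single marked point $z_i$ along a simple loop, and writing such a push as a difference of the Dehn twists about the two push-offs of the loop exhibits $x$ as a multitwist supported on one or two simple closed curves, according to whether the loop encloses exactly one marked point, exactly one puncture, or runs along a handle generator (the three families $X(1),X(2),X(3)$). Using the standard description of the centralizer of a multitwist as the stabilizer of its canonical reduction system acting on the complementary pieces, I would compute $Z(C_{\mathbf{P}_n(\Sigma)}(x))$ for each type and read off its rank. The content of this computation is that the elements of $X$ realize only a short, explicit list of values, forced by the fact that their support is a single simple curve bounding an essentially minimal subsurface.

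The heart of the argument, and where I expect the real difficulty, is the converse: that any $g\in\mathbf{P}_n(\Sigma)$ whose center-of-centralizer rank matches one of these values must already lie in $X^*$. For this I would pass to the Nielsen--Thurston normal form of $g$ regarded as a mapping class in $\mathcal{M}^*_n(\Sigma)$ and argue that the rank of the center of the centralizer sees the full canonical reduction system of $g$ together with the complexity of the complementary pieces: every additional reduction curve contributes a new central Dehn twist and so enlarges the center of the centralizer, while a pseudo-Anosov piece forces the centralizer to be virtually cyclic on that piece and thereby pins the rank from the other side. The hypothesis $\chi(\Sigma)<-1$ is precisely what guarantees that the surface is large enough for these contributions to be genuinely separated, so that the short list of values computed for $X$ cannot be achieved by a multi-curve multitwist, by an element carrying a pseudo-Anosov component, or by a push along a loop enclosing more than one marked point or puncture. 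Combined with the fact that $g\in\mathbf{P}_n(\Sigma)$ is a point-pushing map, this pins $g$ down to a push of one point along a simple loop of one of the three types, hence to a $\mathcal{PM}^*_n(\Sigma)$-conjugate of some $x^{\pm1}\in X$; that is, $g\in X^*$.

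Finally I would apply this characterization generator by generator: since $\phi$ preserves the rank of the center of the centralizer and each $x\in X$ realizes one of the characterizing values, $\phi(x)$ realizes the same value and therefore lies in $X^*$. As this holds for every $x\in X$, we conclude $\phi(X)\subset X^*$, i.e.\ $\phi$ is pre-geometric. The main obstacle, to repeat, is the classification step of the previous paragraph---extracting the canonical reduction system and ruling out pseudo-Anosov and higher-complexity reducible elements from a single numerical invariant---which is where the Nielsen--Thurston machinery and the hypothesis $\chi<-1$ do the essential work.
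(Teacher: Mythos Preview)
Your strategy is on the right track but has a genuine gap: the rank of $Z(C_{\mathbf{P}_n(\Sigma)}(g))$ alone does \emph{not} characterize membership in $X^*$. Every $x\in X$ has $rk(x)=1$, so your ``short list of values'' is just $\{1\}$, and there are many elements $\beta\notin X^*$ with $rk(\beta)=1$. Two concrete families: any pseudo-Anosov element has $rk(\beta)=1$ (its centralizer in $\mathbf{P}_n(\Sigma)$ is infinite cyclic, by Theorem~\ref{thm:pAcenter}); and for $n\ge 3$, the power of a Dehn twist about a simple closed curve bounding a disc containing three or more marked points also has $rk(\beta)=1$ by the rank formula of Proposition~\ref{prop:rank} (one inessential reduction curve, no pseudo-Anosov pieces, no essential curves). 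Neither lies in $X^*$, and the hypothesis $\chi<-1$ does nothing to exclude them. So the sentence ``any $g$ whose center-of-centralizer rank matches one of these values must already lie in $X^*$'' is false as stated.

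The paper repairs this by using a finer invariant than the rank: the isomorphism type of $C(\beta)_{/Z}=C(\beta)/Z(C(\beta))$. For $x\in X$ this quotient is a pure braid group on a surface of negative Euler characteristic (Corollary~\ref{cor:centralizer}), hence nontrivial and directly indecomposable (Proposition~\ref{prop:indecomposable}). For a pseudo-Anosov $\beta$ the quotient is trivial; for the multi-point-enclosing twist it is a nontrivial direct product $\mathbf{P}_{n_+}(\mathbb{R}^2)_{/Z}\times\mathbf{P}_{n_-+1}(\Sigma)$. So the working characterization (Lemma~\ref{lem:converse}) is: $rk(\beta)=1$ \emph{and} $C(\beta)_{/Z}$ is nontrivial and directly indecomposable. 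Even this leaves one residual case, when $\mathcal{R}(\beta)$ bounds an annulus containing all $n$ marked points and $C(\beta)\simeq\mathbf{P}_n(A)$; this is where $\chi<-1$ is actually invoked, to rule out $C(x)_{/Z}\simeq\mathbf{P}_{n-1}(\Sigma_{0,3})$ for $x\in X$ by comparing isomorphism types. Your outline needs both the indecomposability invariant and this final comparison to go through.
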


Hence $\phi$ is always geometric, and therefore Theorem~\ref{thm:peripheral2} implies Proposition~\ref{prop:AutPn}. These propositions will be proven later.

\subsection{Almost-geometricity and type-preserveness}\label{sec:end}
For now, assume $\phi$ is pre-geometric.

For $x\in X_i^*$, we define $end(x)$ as 
\begin{enumerate}
\item $\{z_i,z_j\}$ if $x$ is of type 1 and is a conjugate of $A_{i,j}^{\pm1}$;
\item $\{z_i, p_t\}$ if $x$ is of type 2 and is a conjugate of $\zeta_{i,t}^{\pm1}$;
\item $\{z_i\}$ if $x$ is of type 3.
\end{enumerate}

Then $end(x)$ can be viewed geometrically as follows.
Recall the surface $\Sigma_{\hat i}=\Sigma\setminus\mathbf{z}\cup\{z_i\}$.
Let $\gamma(x)\subset\Sigma_{\hat i}$ be a simple closed curve based at $z_i$ representing $x$ in $\mathbf{U}_i$, and let $\mathcal{R}(x)$ be a set of essential simple closed curves among the boundary components of a closed regular neighborhood of $\gamma(x)$.
Then the curves in $\mathcal{R}(x)$ bound a subsurface $\Sigma(x)$ of nonnegative Euler characteristic, and $end(x)=\Sigma(x)\cap(\mathbf{z}\cup\mathbf{p})$.
Note that $x$ as a mapping class is a product of positive or negative Dehn twists along curves in $\mathcal{R}(x)$. 
Hence if $\gamma(x), \gamma(y)$ are disjoint for some $x,y\in X^*$, then $x$ and $y$ commute.

\begin{lem}\label{lem:rank2subgroup}
Let $x\neq y\in X^*$. Suppose that not both $x$ and $y$ are of type $3$. 
Then any pair of conjugates of $x$ and $y$ generates a rank-$2$ free subgroup if and only if $end(x)\cap end(y)$ is a point.
\end{lem}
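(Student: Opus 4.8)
The statement to prove is a geometric criterion for when two braids $x,y \in X^*$ generate a rank-$2$ free subgroup, expressed entirely in terms of the combinatorial data $end(x)$ and $end(y)$. My plan is to translate both sides into the surface $\Sigma_{\hat i}$ using the geometric interpretation provided just before the lemma: each $x \in X^*$ is represented by a simple closed curve $\gamma(x)$ based at $z_i$ in $\Sigma_{\hat i}$, and as a mapping class it acts as a product of Dehn twists along the boundary curves $\mathcal{R}(x)$ of a regular neighborhood of $\gamma(x)$. The fundamental dictionary is: $end(x) \cap end(y)$ measures how $\gamma(x)$ and $\gamma(y)$ are forced to intersect, which in turn controls whether the associated Dehn twists (equivalently, the elements $x,y$ as braids) can commute or must generate a free group.

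**The two directions.**
The plan is to prove both implications via this dictionary. First I would establish the \emph{backward} direction. Suppose $end(x) \cap end(y)$ is a single point, say $z_i$ (the shared basepoint); then the curves $\gamma(x)$ and $\gamma(y)$ must genuinely intersect there but their neighborhoods $\Sigma(x), \Sigma(y)$ overlap only at that point, so the corresponding Dehn twist supports along $\mathcal{R}(x)$ and $\mathcal{R}(y)$ have nonzero geometric intersection number. I would then invoke a standard fact that two Dehn twists (or the mapping classes $x,y$ here) along curves with positive geometric intersection number generate a free group of rank $2$ — this is the classical ``two parabolics'' or Ishida/ping-pong type result for Dehn twists. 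Crucially, the hypothesis that not both $x$ and $y$ are of type $3$ guarantees that at least one of $\gamma(x), \gamma(y)$ carries a nontrivial boundary curve in its neighborhood, so the intersection is detected by an actual Dehn twist rather than collapsing.

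**Contrapositive of the forward direction.**
For the \emph{forward} direction I would argue the contrapositive: if $end(x) \cap end(y)$ is \emph{not} a single point — i.e.\ it is either empty, or contains two points — then some pair of conjugates of $x$ and $y$ fails to be free, because they commute. If $end(x) \cap end(y) = \emptyset$, then $\gamma(x)$ and $\gamma(y)$ can be realized disjointly in $\Sigma_{\hat i}$ (after choosing appropriate conjugating homeomorphisms to separate their supports), and the remark preceding the lemma gives immediately that $x$ and $y$ commute, hence generate an abelian (not free of rank $2$) group. If $end(x) \cap end(y)$ consists of two points, then $x$ and $y$ are (conjugate to) twists whose supports are parallel — the two shared endpoints force $\gamma(x)$ and $\gamma(y)$ to be isotopic up to the boundary configuration, so the subsurfaces $\Sigma(x)$ and $\Sigma(y)$ can be arranged to share boundary, again yielding commuting conjugates. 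In both subcases the generated group fails to be free of rank $2$.

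**Main obstacle.**
The hard part will be the case analysis on the type of $x$ and $y$ in the backward direction, ensuring that the geometric intersection of $\gamma(x)$ and $\gamma(y)$ really does produce \emph{commuting-free} behavior rather than an accidental relation, and in particular handling the boundary-parallel subtleties when one of the elements is of type $1$ versus type $2$ (the $\zeta$-type curves encircling a puncture $p_t$ behave slightly differently from the $A$-type curves separating two points of $\mathbf{z}$). I expect the cleanest route is to reduce everything to the essential simple closed curves in $\mathcal{R}(x)$ and $\mathcal{R}(y)$ and apply the intersection-number criterion for freeness of Dehn twists there; the type-$3$ exclusion hypothesis is exactly what prevents the degenerate case where both curves are null-homotopic after neighborhood collapse, so I would verify carefully that this hypothesis is used precisely where the geometric intersection number could otherwise vanish.
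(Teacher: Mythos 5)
Your contrapositive argument for the ``only if'' direction (disjoint ends yield commuting conjugates; a two-point intersection forces $x$ to be conjugate to $y^{\pm1}$, hence a cyclic subgroup) is essentially the paper's argument. The genuine gap is in the ``if'' direction. The ``standard fact'' you invoke is false as stated: two Dehn twists along curves with geometric intersection number $1$ satisfy the braid relation $T_aT_bT_a=T_bT_aT_b$ and do \emph{not} generate a free group; the correct criterion (Ishida, Hamidi--Tehrani) requires $i(a,b)\ge 2$. One could rescue this for type $1$ and type $2$ elements, since the curves in $\mathcal{R}(x)$ are then separating and positive intersection number is automatically at least $2$, but the argument collapses exactly where you flag ``the hard part'': a type-$3$ element is not a single Dehn twist but a product $T_{c_1}T_{c_2}^{-1}$ along the two boundary curves of an annular neighborhood of a nonseparating loop, and no off-the-shelf intersection-number criterion gives freeness of such a product against another twist. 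You also leave untreated the case $end(x)\cap end(y)=\{p_t\}$, where the based loops $\gamma(x)$ and $\gamma(y)$ live at different basepoints and can be realized disjointly, so ``they must genuinely intersect there'' does not apply. Finally, your reading of the hypothesis is off: a type-$3$ neighborhood has two essential boundary curves, so nothing ``collapses''; the hypothesis is needed because for two type-$3$ elements $end(x)=end(y)=\{z_i\}$ is a single point carrying no information about the free homotopy classes, so the two elements may be conjugate and the equivalence would fail.

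The paper avoids all of this with a short algebraic observation: if $end(x)\cap end(y)=\{z_i\}$, then $x$, $y$, and all of their conjugates lie in the free \emph{normal} subgroup $\mathbf{U}_i$, so any pair of conjugates generates a free group of rank at most $2$ and one only has to exclude rank $\le1$; when the common point is a puncture $p_t$, one regards $\mathbf{P}_n(\Sigma_{g,p})\subset\mathbf{P}_{n+p}(\Sigma_g)$, swaps the roles of $\mathbf{z}$ and $\mathbf{p}$, and runs the same argument inside $\mathbf{U}_t$. I recommend replacing the Dehn-twist ping-pong with this containment argument; as written, your ``if'' direction does not go through.
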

\begin{proof}
By definition, $end(x)=end(y)$ implies that $x$ is a conjugate of $y$ or $y^{-1}$ since not both $x$ and $y$ are of type 3.

If $end(x)$ and $end(y)$ are disjoint, then there exists $f\in\mathcal{M}^*_n(\Sigma)$ such that $f(\gamma(x))=\gamma(f_*(x))$ is disjoint from $\gamma(y)$, and so $f_*(x)$ and $y$ commute. Since $f_*(x)$ or $f^{-1}_*(y)$ is a conjugate of $x$ or $y$, respectively, we are done.

On the other hand, if $end(x)\cap end(y)=\{z_i\}$ for some $i$, then $x, y\in \mathbf{U}_i$ and they generate a rank-2 free group. Otherwise, if $end(x)\cap end(y)=\{p_t\}$ for some $t$, then we may switch the roles of $\mathbf{z}$ and $\mathbf{p}$ as follows. 
\[
x,y\in\mathbf{P}_{n}(\Sigma_{g,p})\subset\mathbf{P}_{n+p}(\Sigma_g)\supset\mathbf{P}_{p}(\Sigma_{g,n})\supset\mathbf{U}_t.
\]
Then $x$ and $y$ can be regarded as elements in $\mathbf{U}_t$ and therefore by the same argument as above, we are done.
\end{proof}

\begin{defn}
Let $G$ be a group.
We say that a subset $Y=\{y_1, \dots, y_m\}$ in $G$ is {\em a strongly free generating set} if $\{w_1^{-1}y_1w_1,\dots,w_m^{-1}y_mw_m\}$ generates a free subgroup of rank $m$ for any $w_i$'s in $G$.
\end{defn}

Note that strong freeness does not depend on a group presentation, and one of the necessary conditions for strong freeness is as follows.
\begin{lem}\label{lem:strfree}
Let $\mathbf{F}$ be a free normal subgroup of a group $G$ and $Y=\{y_1,\dots,y_m\}$ be a finite subset of $\mathbf{F}$.
If $\{[y_1],\dots,[y_m]\}$ generates $\mathbb{Z}^m$ in $G/[G,G]$, then $Y$ is a strongly free generating set.
\end{lem}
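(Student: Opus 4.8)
The plan is to exploit normality to land everything inside the free group $\mathbf{F}$, and then play the number of generators against an abelianization count. First I would observe that since $\mathbf{F}$ is normal in $G$, each conjugate $w_i^{-1}y_iw_i$ again lies in $\mathbf{F}$, so the subgroup
\[
H=\langle w_1^{-1}y_1w_1,\dots,w_m^{-1}y_mw_m\rangle
\]
is a subgroup of $\mathbf{F}$. By the Nielsen--Schreier theorem, $H$ is itself free. Thus strong freeness reduces to the single claim that these $m$ specified conjugates form a \emph{free basis} of $H$; equivalently, that $H$ is free of rank exactly $m$ with the given generators as a basis, for \emph{every} choice of the $w_i$.

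Next I would pin down the rank of $H$ by two-sided estimate. For the upper bound, $H$ is generated by $m$ elements, so its rank is at most $m$. For the lower bound I would pass to the abelianization $G/[G,G]$: the image of a conjugate $w_i^{-1}y_iw_i$ there is just the class $[y_i]$, since conjugation becomes trivial in an abelian quotient. Hence the image of $H$ in $G/[G,G]$ is the subgroup generated by $[y_1],\dots,[y_m]$, which by hypothesis is free abelian of rank $m$. The composite $H\to G/[G,G]$ factors through $H/[H,H]\cong\mathbb{Z}^{\operatorname{rk}(H)}$, yielding a surjection $\mathbb{Z}^{\operatorname{rk}(H)}\twoheadrightarrow\mathbb{Z}^m$; comparing $\mathbb{Q}$-dimensions forces $\operatorname{rk}(H)\ge m$. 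Combining the two bounds gives $\operatorname{rk}(H)=m$.

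Finally I would upgrade ``rank $m$'' to ``the given generators are a basis'' using that free groups are Hopfian. Since $H$ is free of rank $m$ and is generated by the $m$ elements $w_i^{-1}y_iw_i$, the canonical epimorphism from a rank-$m$ free group sending a basis to these generators is a surjection of $H$ onto itself up to the identification $H\cong\mathbf{F}_m$, hence an isomorphism by Hopficity; so the conjugates freely generate $H$. As this holds for arbitrary $w_i\in G$, the set $Y$ is strongly free. The argument is essentially formal, so I do not anticipate a genuine obstacle; the only subtlety worth stating carefully is the last step, namely that a free group of rank $m$ generated by $m$ elements necessarily has those elements as a free basis, which is exactly where Hopficity is needed and which prevents a naive rank count from sufficing on its own.
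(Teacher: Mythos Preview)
Your proof is correct and follows essentially the same approach as the paper: use normality to land the conjugates in $\mathbf{F}$, invoke Nielsen--Schreier so that the generated subgroup $H$ is free of rank $m'\le m$, and then use the abelianization hypothesis to force $m'=m$. The paper's version is just a terse compression of your first two steps. Your final Hopficity step, upgrading ``$H$ has rank $m$'' to ``the given conjugates are a free basis,'' is correct but unnecessary here: the paper's definition of \emph{strongly free generating set} only asks that the subgroup generated be free of rank $m$, not that the specified conjugates themselves form a free basis.
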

\begin{proof}
All conjugates of elements in $Y$ are lying in $\mathbf{F}$ since $\mathbf{F}$ is normal in $G$, and they generate a free subgroup of rank $m'\le m$.
However, $\{[y_1], \dots, [y_m]\}$ generates $\mathbb{Z}^m$ only if $m'=m$, and so the condition implies strongly-freeness.
\end{proof}

Apply Lemma~\ref{lem:strfree} with $G=\mathbf{P}_{n}(\Sigma)$, $\mathbf{F}=\mathbf{U}_i$ and $Y\subset X_i$.
The lemma below is the key ingredient to prove Proposition~\ref{prop:UiPres} and Proposition~\ref{prop:typepres}.

\begin{lem}\label{lem:freesubgroup}
Let $Y=\{y_1,\dots, y_m\}\subset X^*$ with $m\ge 3$.
Suppose that at most one element in $Y$ is of type $3$.
Then $Y$ is strongly free only if
$end(x)\cap end(y)\cap end(z)$ is a point for any distinct triple $\{x,y,z\}\subset Y$, and therefore, all $end(y_i)$'s share only one point either $z_i$ or $p_t$ for some $i$ or $t$.
\end{lem}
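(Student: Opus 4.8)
The plan is to prove the contrapositive of the forward direction geometrically, using the characterization of commuting and free pairs from Lemma~\ref{lem:rank2subgroup}. Concretely, I want to show that if $Y=\{y_1,\dots,y_m\}$ is strongly free with at most one element of type $3$, then every triple has its three $end$-sets meeting in a single common point, and then upgrade this to the statement that \emph{all} of the $end(y_i)$ share one common point. The reason to expect this is that each $y_i$ corresponds geometrically to a simple closed curve $\gamma(y_i)$ (or a product of Dehn twists along the boundary of a neighborhood), and freeness of a conjugate pair forces the two curves to be unremovably linked — by Lemma~\ref{lem:rank2subgroup} this happens precisely when $end(y_i)\cap end(y_j)$ is a single point. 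So strong freeness of $Y$ already forces every pairwise intersection $end(y_i)\cap end(y_j)$ to be a single point.

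First I would record the pairwise consequence: since any two conjugates of $y_i,y_j$ generate a rank-$2$ free group (taking the other conjugating words trivial), Lemma~\ref{lem:rank2subgroup} gives that $end(y_i)\cap end(y_j)$ is exactly one point for each $i\neq j$ (here I use that at most one $y_i$ is of type $3$, so the hypothesis of Lemma~\ref{lem:rank2subgroup} holds for every relevant pair). Next I would analyze a fixed triple $\{x,y,z\}$. Each $end(\cdot)$ is a subset of $\mathbf{z}\cup\mathbf{p}$ of size at most two (by the three cases in the definition of $end$), with type-$1$ and type-$2$ elements giving two-point sets and the type-$3$ element a singleton. The pairwise condition says $end(x)\cap end(y)$, $end(y)\cap end(z)$, and $end(x)\cap end(z)$ are each a single point. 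The combinatorial core is to rule out the configuration where these three intersection points are \emph{distinct}: if $end(x)\cap end(y)=\{q_1\}$, $end(y)\cap end(z)=\{q_2\}$, $end(x)\cap end(z)=\{q_3\}$ with $q_1,q_2,q_3$ pairwise distinct, I would derive a contradiction with strong freeness of the triple by exhibiting conjugating words $w_x,w_y,w_z$ that collapse the rank.

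The geometric mechanism for that contradiction is the main obstacle, and where I would spend the most care. The idea is that when the three pairwise intersection points are all different, the curves $\gamma(x),\gamma(y),\gamma(z)$ form a ``cycle'' configuration (each shares exactly one endpoint with the next), and in such a configuration one can push one of the curves, via a mapping class supported away from the others or by a suitable conjugation inside the ambient $\mathbf{P}_n(\Sigma)$, so that the three conjugates satisfy a nontrivial relation — for instance the boundary of a pair of pants or a once-punctured torus neighborhood of the union gives a relation among the Dehn twists, so the conjugates cannot generate a free group of rank $3$. Making this precise requires carefully choosing the conjugators $w_x,w_y,w_z$ so that the realized curves bound a subsurface of small complexity, contradicting strong freeness; the bookkeeping of which point each $end$-set contributes, together with the switch of roles between $\mathbf{z}$ and $\mathbf{p}$ used in Lemma~\ref{lem:rank2subgroup} when the shared point is a puncture, is the delicate part.

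Finally, once every triple is forced to share a common point, I would promote this to the conclusion that all $end(y_i)$ share a single common point. Since $m\ge 3$, fix indices $1,2,3$ and let $q$ be their common point. For any further index $\ell$, the triple $\{1,2,\ell\}$ shares a point and the triple $\{1,3,\ell\}$ shares a point; combined with the fact that $end(y_1)$ has at most two elements, a short case check forces $q\in end(y_\ell)$ as well, so $q$ lies in every $end(y_i)$. This common point is then either some $z_i$ or some $p_t$, which is exactly the asserted conclusion and, via the role-switching inclusion $\mathbf{P}_n(\Sigma_{g,p})\subset\mathbf{P}_{n+p}(\Sigma_g)$, places all the $y_i$ inside a single $\mathbf{U}_i$ (or its puncture-analog), as needed for the applications to Propositions~\ref{prop:UiPres} and~\ref{prop:typepres}.
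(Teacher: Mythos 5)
Your overall architecture matches the paper's: use Lemma~\ref{lem:rank2subgroup} to get that every pairwise intersection $end(y_i)\cap end(y_j)$ is a single point, reduce the failure of the conclusion to the ``triangle'' configuration in which the three pairwise intersection points of a triple are distinct, kill that configuration, and then globalize (your final promotion step is fine, and is essentially the paper's ``vary $z$ in $Y$''). The problem is that you have not actually killed the triangle configuration --- you explicitly flag it as ``the main obstacle'' and ``the delicate part'' and leave it at the level of an expectation that some choice of conjugators will produce a relation. That step is the entire content of the lemma, and the mechanism you sketch is not quite the right one: since the ends pairwise intersect, the curves $\gamma(x),\gamma(y),\gamma(z)$ can never be made disjoint, so no amount of ``pushing one curve away from the others'' produces a commutation relation as in the disjoint-ends case of Lemma~\ref{lem:rank2subgroup}. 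What is needed is a genuine identity among the three elements themselves.

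The paper closes this gap in two concrete moves that your proposal is missing. First, a combinatorial classification: in a triangle with three distinct vertices, a vertex lying in $\mathbf{p}$ forces both incident ends to be of type $2$, and no end joins two punctures, so the triple consists either of three type-$1$ elements or of one type-$1$ and two type-$2$ elements; by the definition of $X^*$ and of $end$, after conjugating, inverting, and applying an element of $\mathcal{PM}^*_n(\Sigma)$ (all of which preserve strong freeness), the triple becomes exactly $\{A_{1,2},A_{2,3},A_{3,1}\}$ or $\{\zeta_{1,t},\zeta_{2,t},A_{1,2}\}$. Second, explicit lantern-type relations for these two normal forms:
\[
A_{1,2}A_{3,1}A_{2,3}=A_{2,3}A_{1,2}A_{3,1}=A_{3,1}A_{2,3}A_{1,2}=(\sigma_1\sigma_2)^3,
\qquad
A_{1,2}\zeta_{2,t}\zeta_{1,t}=\zeta_{1,t}A_{1,2}\zeta_{2,t}=\zeta_{2,t}\zeta_{1,t}A_{1,2}=\zeta_t\sigma_1\zeta_t\sigma_1 .
\]
Equality of the cyclic products forces, e.g., $A_{2,3}$ to commute with $A_{1,2}A_{3,1}$, which is impossible in a free group of rank $3$; so the triple (with trivial conjugators) is already not free of rank $3$, contradicting strong freeness. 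Until you either verify such relations or supply an equivalent argument (e.g.\ an honest lantern relation in $\mathcal{M}^*_n(\Sigma)$ translated back through $Push$), the proof is incomplete at its central point.
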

\begin{proof}
Suppose that the lemma fails for some triple $\{x,y,z\}$.
Then $end(x)\cap end(y)\cap end(z)=\emptyset$ since all $end(y_i)$'s are different  by Lemma~\ref{lem:rank2subgroup}.

If one of them, say $x$, is of type 3, then neither $y$ nor $z$ is of type 3 by the hypothesis, and $end(x)\cap end(y)$ or $end(x)\cap end(z)$ is empty. By Lemma~\ref{lem:rank2subgroup} again, this is impossible.

Therefore all $x,y$ and $z$ are of type 1 or 2, and their ends intersect pairwise exactly at one point. It means that either all of them are of type 1, or only one of them is of type 1.
Hence by definition of types, some conjugates of $x,y$ and $z$ or their inverses are precisely equal to either $A_{1,2}$, $A_{2,3}$ and $A_{3,1}$ or $\zeta_{1,t}$, $\zeta_{2,t}$ and $A_{1,2}$, for some $1\le t\le p$.

However, these two triples satisfy the relations as follows.
\begin{align*}
A_{1,2}A_{3,1}A_{2,3} = A_{2,3}A_{1,2}A_{3,1}&=A_{3,1}A_{2,3}A_{1,2}=(\sigma_1\sigma_2)^3\\
A_{1,2} \zeta_{2,t}\zeta_{1,t} = \zeta_{1,t} A_{1,2} \zeta_{2,t} &= \zeta_{2,t}\zeta_{1,t}A_{1,2} = \zeta_t\sigma_1\zeta_t\sigma_1
\end{align*}
Therefore $\{x,y,z\}$ is not a strongly free generating set and this is a contradiction.

The last statement follows easily by varying $z$ in $Y$.
\end{proof}

Notice that the relations in the above proof are essentially same as the {\em lantern relation} in the mapping class group \cite[Proposition~5.1]{FM}.

Now we prove Proposition~\ref{prop:UiPres} and Proposition~\ref{prop:typepres}, and separate the cases according to $g$. 

\begin{proof}[Proof of Proposition~\ref{prop:UiPres} for $g=0$]
Suppose $g=0$. Then $X(3)=\emptyset$ and $p>3$ since $\chi<-1$.

We first fix $i$. 
Then for any proper subset $Y\subsetneq X_i$, we have $Y\subset \mathbf{U}_i\lhd\mathbf{P}_{n}(\Sigma)$ and $Y$ satisfies the condition on homology classes described in Lemma~\ref{lem:strfree}. Hence $Y$ is a strongly free generating set, and so is $\phi(Y)$. Hence Lemma~\ref{lem:freesubgroup} implies that $\bigcap_{y\in\phi(Y)} end(y)$ share a common point either $z_j$ or $p_t$ for any $Y\subsetneq X_i$, and for $Y=X_i$ itself.
Therefore, it suffices to show that $\bigcap_{y\in\phi(X_i)} end(y)=\{z_j\}$ for some $j$.

If $p_t\in end(\phi(y))$ for all $y\in X_i$, then there are only $n$ possibilities for $end(\phi(y))$. However since $|X_i|=p+n-1>n+1$, this is a contradiction by choosing a strongly free subset $Y\subsetneq X_i$ with $|Y|>n$.
\end{proof}

\begin{proof}[Proof of Proposition~\ref{prop:typepres} for $g=0$]
Suppose $g=0$. Since $X(3)=\emptyset$, it suffices to show that $\phi(X(1))\subset X^*(1)$.
Then by Proposition~\ref{prop:UiPres}, $\phi(X_i)\subset X^*_{i'}$ and $\phi(X_j)\subset X^*_{j'}$ for some $i'$ and $j'$.
Since $X_i\cap X_j = \{A_{i,j}\}$, 
\[
\phi(A_{i,j}) = \phi(X_i)\cap\phi(X_j)\subset X^*_{i'}\cap X^*_{j'}.
\]
Therefore $\phi(A_{i,j})$ is of type 1 since both $z_{i'}$ and $z_{j'}$ are in $end(\phi(A_{i,j}))$.
\end{proof}

\begin{proof}[Proof of Proposition~\ref{prop:typepres} for $g\ge 1$]
Suppose $g\ge 1$. Then $\phi$ preserves elements of type 1 because
the definiting relator (\ref{eqn:pscr}) implies that all $A_{i,j}$'s are in the commutator subgroup which is characteristic, and furthermore, all other types of generators survive in $H_1(\mathbf{P}_{n}(\Sigma))$ under the abelianization.

Moreover if $p=0$ then $X(2)=\emptyset$ and therefore $\phi$ is type-preserving.
Otherwise if $p>0$, we suppose that $x$ is of type 3 but $\phi(x)$ is of type 2.
Then, by (\ref{eqn:pscr}) again, there exists $y$ of type 3 such that $[x,y]$ is of type 1, and therefore so is $[\phi(x),\phi(y)]$.

Let $q:\mathbf{P}_{n}(\Sigma)\to
\mathbf{P}_{n}(\Sigma)/\langle\!\langle X(2)\rangle\!\rangle\simeq\mathbf{P}_n(\Sigma_g)$ be the quotient map coming from Corollary~\ref{cor:quotient}.
Then $q(\phi(x))=e$ and $\ker(q)\cap X^*(1)=\emptyset$. Hence
\[
e\neq q(\phi([x,y]))=q([\phi(x),\phi(y)])=[q(\phi(x)),q(\phi(y))]=e.
\]
This contradiction completes the proof.
\end{proof}

Recall Theorem~\ref{thm:Goldberg} due to Goldberg. 
The map $\iota_*:\mathbf{P}_{n}(\Sigma)\to\prod^n\pi_1\Sigma$ is nothing but the quotient map by elements of type 1.
Hence by Proposition~\ref{prop:typepres} for $g\ge 1$ above, there exists an induced isomorphism $\hat\phi$ that makes the following diagram commutative.
\[
\xymatrix{
\mathbf{P}_{n}(\Sigma)\ar[r]^\phi\ar[d]_{\iota_*}&\mathbf{P}_{n}(\Sigma)\ar[d]^{\iota_*}\\
\prod_{i=1}^n\pi_1(\Sigma,z_i)\ar[r]^{\hat\phi}&\prod_{i=1}^n\pi_1(\Sigma,z_i)
}
\]

\begin{proof}[Proof of Proposition~\ref{prop:UiPres} for $g\ge1$]
We claim that for each $i$, there exists $j$ such that
\[
\hat\phi(\pi_1(\Sigma, z_i)) = \pi_1(\Sigma, z_j).
\]

Let $x,y\in X_i(2)\cup X_i(3)$ with $end(x)\neq end(y)$.
Note that $\iota_*(x)$ and $\iota_*(y)$ do not commute because $\pi_1(\Sigma,z_i)$ is a one-relator group with $2g+p\ge 3$ generators and so they generate a free group of rank 2. However, if $\hat\phi(x)\in\pi_1(\Sigma,z_j)$ and $\hat\phi(y)\in\pi_1(\Sigma,z_k)$ for some $j\neq k$, then
this is a contradiction since $\hat\phi(\pi(x))$ and $\hat\phi(\pi(y))$ commute. Therefore $j=k$ and so the claim is proved.

By this claim, for each $i$ there exists $j$ such that $\phi(X_i(2)\cup X_i(3)) \subset X_j^*(2)\cup X_j^*(3)$. Hence 
it only remains to prove that $\phi(X_i(1))\in X_j^*(1)$.

Let $x\in X_i(1)$ and $y\in X_i(3)$. Then $\phi(x)$ is of type 1 and $\phi(y)$ is of type 3 by Proposition~\ref{prop:typepres} for $g\ge 1$ above.
Moreover, $\{x,y\}$ is strongly free by Lemma~\ref{lem:rank2subgroup} and so is $\{\phi(x), \phi(y)\}$.
Hence $end(\phi(x))$ and $end(\phi(y))$ intersect by Lemma~\ref{lem:rank2subgroup} again. This implies that $z_j\in end(\phi(x))$, and therefore $\phi(x)\in X_j^*$.
\end{proof}

\subsection{Nielsen-Thurston theory and pre-geometricity}
We first briefly review the Nielsen-Thurston classification to prove the propositions above.

Let $\beta\in\mathbf{P}_{n}(\Sigma) \subset \mathcal{M}_{n}^*(\Sigma)$.
Then $\beta$ is either {\em periodic} if it is of finite order; {\em reducible} if it is nontrivial and fixes a non-empty collection of isotopy classes of essential, pairwise disjoint, simple closed curves in $\Sigma\setminus\mathbf{z}$, called a {\em reduction system $\mathcal{R}(\beta)$}; or {\em pseudo-Anosov} if $\beta$ is neither periodic nor reducible.
In our case, since $\mathbf{P}_{n}(\Sigma)$ is torsion-free, $\beta$ is either reducible or pseudo-Anosov.

The {\em canonical reduction system} $\mathcal{R}(\beta)$
is defined as follows.
An essential simple closed curve $c$ is in $\mathcal{R}(\beta)$ if
\begin{enumerate}
\item $\{\beta^i(c)|i\in\mathbb{Z}\}$ is a reduction system;
\item $\beta^k(b)\neq b$ for any $k\neq0$ and for any essential simple closed curve $b$ with geometric intersection number $i(b,c)\neq0$.
\end{enumerate}

For simplicity, we denote the centralizer $C_{\mathbf{P}_{n}(\Sigma)}(\beta)$ by $C(\beta)$, the center $Z(C(\beta))$ by $Z(\beta)$ (this is a free abelian group), and the rank of $Z(\beta)$ by $rk(\beta)$.
It is obvious that these notions are preserved by $\phi$. 

\begin{thm}\cite[Corollary~3]{Mc}\label{thm:pAcenter}
Let $\beta\in\mathbf{P}_{n}(\Sigma)$ be a pseudo-Anosov braid. Then $rk(\beta)=1$ and moreover,
\[C(\beta)=Z(\beta)\simeq\mathbb{Z}.\] 
\end{thm}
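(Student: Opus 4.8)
The plan is to realize the centralizer $C(\beta)$ inside the mapping class group and exploit the fact that the centralizer of a pseudo-Anosov class is as small as possible. Since we are in the range $\chi<-1$, the center $Z(\mathbf{B}_n(\Sigma))$ is trivial by Lemma~\ref{lem:centerBraid}~(3), so $Push$ embeds $\mathbf{P}_n(\Sigma)$ as a subgroup of $\mathcal{M}^*_n(\Sigma)$, and as noted in the proof of Lemma~\ref{lem:vertInj} the group $\mathbf{P}_n(\Sigma)$ is torsion-free. Under this identification $C(\beta)=\mathbf{P}_n(\Sigma)\cap C_{\mathcal{M}^*_n(\Sigma)}(\beta)$, so it suffices to control the centralizer of $\beta$ in the full mapping class group.

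The key input is the Nielsen-Thurston structure of $\beta$: as a pseudo-Anosov class it carries a well-defined pair of transverse measured foliations with dilatation $\lambda>1$. Any mapping class commuting with $\beta$ must preserve this pair of foliations, and recording the factor by which it scales the transverse measures gives a homomorphism $C_{\mathcal{M}^*_n(\Sigma)}(\beta)\to\mathbb{R}_{>0}$ whose image is infinite cyclic, generated by a power of $\lambda$, and whose kernel is finite, since a mapping class fixing both foliations together with their measures is determined by its action on the finite set of singular leaves. This is precisely McCarthy's theorem, and it shows that $C_{\mathcal{M}^*_n(\Sigma)}(\beta)$ is virtually infinite cyclic.

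First I would then observe that $C(\beta)$, being a subgroup of a virtually cyclic group, is itself virtually cyclic; it is torsion-free because $\mathbf{P}_n(\Sigma)$ is, and it is infinite because it contains $\langle\beta\rangle\simeq\mathbb{Z}$ (a pseudo-Anosov class has infinite order). A torsion-free infinite virtually cyclic group is isomorphic to $\mathbb{Z}$, so $C(\beta)\simeq\mathbb{Z}$. Being abelian, $C(\beta)$ coincides with its own center, whence $Z(\beta)=Z(C(\beta))=C(\beta)\simeq\mathbb{Z}$ and $rk(\beta)=1$, as claimed.

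The hard part is entirely contained in the structural statement about centralizers of pseudo-Anosov classes, which I would cite from McCarthy rather than reprove; the only points requiring care are that the orientation-reversing elements allowed in $\mathcal{M}^*_n(\Sigma)$ enlarge the centralizer by at most a finite factor, preserving virtual cyclicity, and that the torsion-free and infinite-order properties used at the end are already guaranteed in our range $\chi<-1$.
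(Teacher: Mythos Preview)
Your argument is correct and follows essentially the same route as the paper: identify $\mathbf{P}_n(\Sigma)$ with a torsion-free subgroup of $\mathcal{M}^*_n(\Sigma)$ via $Push$, invoke McCarthy's structural result on centralizers of pseudo-Anosov classes, and conclude that the torsion-free subgroup $C(\beta)$ must be infinite cyclic. The paper simply cites McCarthy's Corollary~3 in the form ``every torsion-free subgroup of the centralizer of a pseudo-Anosov mapping class is infinite cyclic,'' whereas you unpack the virtually-cyclic statement and deduce the same conclusion; the difference is expository rather than mathematical.
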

In \cite{Mc}, Corollary~3 says that every torsion-free subgroup of the centralizer of a pseudo-Anosov mapping class in $\mathcal{M}^*_{n}(\Sigma)$ is infinite cyclic. Since $\mathbf{P}_{n}(\Sigma)$ can be identified with a subgroup of $\mathcal{M}^*_{n}(\Sigma)$ and is torsion-free, the assertion follows.

In general, let $pa(\beta)$ be the number of pseudo-Anosov restrictions of $\beta\in\mathbf{P}_{n}(\Sigma)$ on connected components of $\Sigma\setminus\mathbf{z}\setminus\mathcal{R}(\beta)$ and let $\mathcal{R}^*(\beta)$ and $\mathcal{R}^0(\beta)$ be the sets of curves which are essential and inessential, respectively, in $\Sigma$.
For each $c\in\mathcal{R}^*(\beta)$, let $[c]$ be the set of curves in $\mathcal{R}^*(\beta)$ which are isotopic to $c$ not in $\Sigma\setminus\mathbf{z}$ but in $\Sigma$. Then there exists an integer $k$ and $\{c_1,\dots,c_k\}\subset\mathcal{R}^*(\beta)$ such that $\mathcal{R}^*(\beta)$ is a disjoint union of $[c_i]$'s.
Note that since $\beta$ is isotopic to the identity in $\mathcal{M}^*(\Sigma)$, each $[c_i]$ consists of at least 2 curves.

\begin{prop}\cite[Proposition~4.1]{IIM}\label{prop:rank}
Let $\beta\in\mathbf{P}_{n}(\Sigma)$. Then 
\[
rk(\beta)=pa(\beta)+ |\mathcal{R}^0(\beta)|+\sum_{i=1}^k (|[c_i]|-1).
\]
\end{prop}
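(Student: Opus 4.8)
The plan is to compute the center $Z(\beta)$ of the centralizer $C(\beta)$ by appealing to the structure of centralizers in mapping class groups, and to exhibit an explicit free abelian basis of $Z(\beta)$ whose rank matches the stated formula. The overall strategy is to decompose $\beta$ along its canonical reduction system and attribute to each piece one independent generator of the center.

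First I would recall the standard description of the centralizer of a reducible mapping class: cutting $\Sigma\setminus\mathbf{z}$ along $\mathcal{R}(\beta)$ yields finitely many $\beta$-invariant pieces, on each of which $\beta$ restricts either to a pseudo-Anosov map or to a periodic (here, by torsion-freeness, trivial) map, and $C(\beta)$ acts by permuting pieces of the same topological type while commuting with each restriction. An element lies in the \emph{center} $Z(\beta)$ precisely when it commutes with \emph{every} element of $C(\beta)$; this forces it to fix each piece and each reduction curve individually, and to act on each pseudo-Anosov piece by an element of the center of that piece's centralizer. By Theorem~\ref{thm:pAcenter} applied to each pseudo-Anosov piece, that local center is infinite cyclic, contributing the $pa(\beta)$ term. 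The Dehn twists along the reduction curves contribute the remaining terms: each inessential curve in $\Sigma$ (those in $\mathcal{R}^0(\beta)$) gives an independent central twist, yielding $|\mathcal{R}^0(\beta)|$; and for each essential isotopy class $[c_i]$ in $\Sigma$, the $|[c_i]|$ parallel curves give $|[c_i]|$ twists subject to one relation, since their product along an essential curve $c$ in $\Sigma$ is realized by a mapping class isotopic to the identity in $\mathcal{M}^*(\Sigma)$ — this is the source of the $(|[c_i]|-1)$ contributions.

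The key technical point is to verify both that these candidate elements genuinely lie in $Z(\beta)$ and that they are independent and exhaustive. Independence follows because distinct reduction curves and distinct pseudo-Anosov pieces give Dehn twists and stretch factors that are detected by disjoint geometric data, while the single relation among each $[c_i]$ is exactly the statement that the corresponding product of twists is inessential in the closed surface — here I would use that $\beta\in\mathbf{P}_n(\Sigma)$ maps to the identity in $\mathcal{M}^*(\Sigma)$, as noted before the proposition, so each class $[c_i]$ has at least two curves and their signed total twist is central yet subject to precisely one dependency. Exhaustiveness follows from the centralizer description above: any central element must preserve the canonical reduction system setwise, fix each curve and piece (because it must commute with the large supply of half-twists and permutations living in $C(\beta)$), and hence be a product of twists along $\mathcal{R}(\beta)$ together with central pseudo-Anosov powers.

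The main obstacle I anticipate is the careful bookkeeping distinguishing isotopy \emph{in} $\Sigma\setminus\mathbf{z}$ from isotopy \emph{in} $\Sigma$, which is exactly what the classes $[c_i]$ encode: curves that are non-isotopic once the punctures $\mathbf{z}$ are present may become isotopic after filling them in, and it is this subtlety that produces the $(|[c_i]|-1)$ rather than $|[c_i]|$ in the count. Handling this correctly requires identifying which reduction curves bound annuli or pieces containing points of $\mathbf{z}$ and organizing the corresponding twists into the essential and inessential buckets; once that dichotomy is set up cleanly, the rank computation reduces to counting generators minus the one relation per essential class, and the formula follows.
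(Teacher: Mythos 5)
Your overall strategy---decompose $\beta$ along its canonical reduction system and attribute one infinite cyclic central factor to each pseudo-Anosov piece, one to each inessential reduction curve, and $|[c_i]|-1$ to each essential class---is the same count the paper performs, but you organize it as a direct exhibition of a basis of $Z(\beta)$, whereas the paper runs an induction on $\chi(\Sigma)$: it first cuts along one representative $c_i$ of each essential class (so that the remaining curves of $[c_i]$ become inessential in the pieces), uses the factorization $C(\beta)\simeq\prod_j C(\beta_j)$ to get $rk(\beta)=\sum_j rk(\beta_j)$, and then reduces to the base case of a disc or once-punctured disc, where the rank formula comes from the exact sequence of Gonz\'alez-Meneses--Wiest. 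The inductive packaging is what lets the paper avoid proving exhaustiveness of a candidate generating set directly; in your version that exhaustiveness claim (``any central element must be a product of twists along $\mathcal{R}(\beta)$ and central pseudo-Anosov powers'') is the real content and is only asserted, though it is comparable in rigor to what the paper itself writes.

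There is, however, one concretely wrong step. You justify the $(|[c_i]|-1)$ contribution by saying the $|[c_i]|$ parallel twists are ``subject to one relation, since their product \dots is realized by a mapping class isotopic to the identity in $\mathcal{M}^*(\Sigma)$.'' This is false: the Dehn twists $t_{c_j}$, $c_j\in[c_i]$, generate a free abelian group of rank $|[c_i]|$ in $\mathcal{M}^*_n(\Sigma)$, and their product maps to $t_{c_i}^{|[c_i]|}\neq 1$ in $\mathcal{M}^*(\Sigma)$; there is no relation among them. The correct mechanism is membership, not a relation: since $C(\beta)$ and hence $Z(\beta)$ live inside $\mathbf{P}_n(\Sigma)=\ker\bigl(\mathcal{M}^*_n(\Sigma)\to\mathcal{M}^*(\Sigma)\times\mathbf{S}_n\bigr)$, a single twist $t_{c_j}$ along a curve essential in $\Sigma$ is not an element of $\mathbf{P}_n(\Sigma)$ at all. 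A combination $\prod_j t_{c_j}^{a_j}$ maps to $t_{c_i}^{\sum a_j}$, so it lies in the pure braid group exactly when $\sum_j a_j=0$; these are the point-pushing elements supported in the annuli between consecutive parallel curves, and they form a sublattice of corank one, whence $|[c_i]|-1$. You do gesture at this (``signed total twist \dots subject to precisely one dependency''), but as written the argument would have you looking for a relation in the mapping class group that does not exist. With that step repaired, and with the exhaustiveness of your generating set either proved via the centralizer structure theory or replaced by the paper's induction, the proposal goes through.
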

A proof is given since in \cite{IIM} the proposition is stated without a proof and this generalizes to non-closed cases.
\begin{proof}We use induction on $\chi(\Sigma)$.

Let $\Sigma\setminus (c_1\cup\dots\cup c_k)=\{\Sigma^1,\dots, \Sigma^m\}$. Then $\chi(\Sigma^j)>\chi(\Sigma)$ and we suppose that the formula above holds for each $\Sigma^j$.

Since $\chi(\Sigma^j)<0$ and any element in $C(\beta)$  as a mapping class preserves $\mathcal{R}(\beta)$ and $\mathcal{R}^*(\beta)$ as well, $C(\beta)$ can be factored as a direct product $\prod_{j=1}^m C(\beta_j)$, and therefore
\[
rk(\beta)=\sum_{j=1}^m rk(\beta_j)
\]
where $\beta_j$ is the restriction of $\beta$ on $\Sigma^j$.
Then the curves $c_i$'s are inessential in each $\Sigma^j$, and so all curves in $[c_i]$ except $c_i$ itself contribute to curves in $\mathcal{R}^0(\beta_j)$ for some $j$. Moreover, $\mathcal{R}^*(\beta_j)$ for each $j$ is empty.

Let ${\mathcal{R}^{0}}'(\beta_j)=\mathcal{R}^0(\beta)\cap\mathcal{R}^0(\beta_j)$, which consists of curves not isotopic to any $c_i$'s.
Then $\cup_{j=1}^m{\mathcal{R}^0}'(\beta) = \mathcal{R}^0(\beta)$ and so
\begin{align*}
rk(\beta)=\sum_{j=1}^m \left( pa(\beta_j) + \left|\mathcal{R}^0(\beta_j)\right| \right)= pa(\beta)+\sum_{j=1}^m\left|{\mathcal{R}^0}'(\beta_j)\right| +\sum_{i=1}^k(|[c_i]|-1).
\end{align*}

Hence we may assume that $\mathcal{R}^*(\beta)=\emptyset$. Call $d\in\mathcal{R}^0(\beta)$ {\em outermost} if there is no $d'\in\mathcal{R}^0(\beta)$ bounding a disc or once-punctured disc containing $d$. This notion is well-defined since $\chi(\Sigma)<0$.
Let $\{d_1,\dots,d_\ell\}$ be the set of outermost curves in $\mathcal{R}^0(\beta)$, 

Suppose $\Sigma\setminus(d_1\cup\dots\cup d_\ell)=\{\Sigma^0,\Sigma^1,\dots,\Sigma^\ell\}$, where $\Sigma^0<0$ and $\Sigma^j>0$ for all $j>0$.
Then as before, $C(\beta)=\prod_{j=0}^\ell C(\beta_j)$ and so $rk(\beta)=\sum_{j=0}^\ell rk(\beta_j)$, where $\beta_j$ is the restriction of $\beta$ on $\Sigma^j$.

Moreover, $\mathcal{R}(\beta_0)=\emptyset$, and $\cup_{j=1}^m\mathcal{R}^0(\beta_j)=\mathcal{R}^0(\beta)\setminus\{d_1,\dots,d_\ell\}$.
Then from the exact sequence of Theorem~1.1 in \cite{GW}, it can be easily obtained that $rk(\gamma)=pa(\gamma)+\left|\mathcal{R}^0(\gamma)\right|+1$ if $\Sigma$ is either a disc or once-punctured disc.
Hence
\begin{align*}
rk(\beta)=\sum_{j=1}^\ell rk(\beta_j)&=\sum_{j=1}^\ell \left(pa(\beta_j)+\left|\mathcal{R}^0(\beta_j)\right| + 1\right)=pa(\beta)+\left(\left|\mathcal{R}^0(\beta)\right|-\ell\right)+\ell.
\end{align*}

Therefore the above formula holds.
\end{proof}

As a consequence of these results, we have a direct indecomposability result for braid groups as follows.
Two groups $H_1$ and $H_2$ are {\em abstractly commensurable} if there are finite index subgroups $K_i\subset H_i$ for $i=1,2$ which are isomorphic.

\begin{prop}\label{prop:indecomposable}
Any group which is abstractly commensurable with $\mathbf{B}_{n}(\Sigma)$ is directly indecomposable.
\end{prop}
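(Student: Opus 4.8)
The plan is to reduce the statement to the claim that no finite-index subgroup of $\mathbf{B}_n(\Sigma)$ splits as a direct product of two infinite groups, and then to exploit the rigidity of pseudo-Anosov centralizers coming from Theorem~\ref{thm:pAcenter}. Concretely, let $G$ be abstractly commensurable with $\mathbf{B}_n(\Sigma)$, so that there are finite-index subgroups $K\subset G$ and $L\subset\mathbf{B}_n(\Sigma)$ with $K\cong L$; since $\chi<-1$ the center of $\mathbf{B}_n(\Sigma)$ is trivial by Lemma~\ref{lem:centerBraid}~(3), so $L$ is identified via $Push$ with a torsion-free finite-index subgroup of $\mathcal{M}^*_n(\Sigma)$, on which Nielsen-Thurston theory applies. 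A decomposition $G=G_1\times G_2$ induces, after intersecting with $K$, a direct product $(K\cap G_1)\times(K\cap G_2)$ which is a finite-index subgroup of $K\cong L$, hence of a finite-index subgroup $H$ of $\mathbf{B}_n(\Sigma)$. Writing $H=A\times B$, it suffices to show that one of $A,B$ is trivial.

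First I would record two elementary facts about direct products: for $(a,b)\in A\times B$ one has $C_{A\times B}((a,b))=C_A(a)\times C_B(b)$, and $g\in Z(C(g))$ for every $g$. Now choose a pseudo-Anosov braid $\beta\in H$; such elements exist because $H$ has finite index and $\chi<-1$. By Theorem~\ref{thm:pAcenter}, $C_{\mathbf{B}_n(\Sigma)}(\beta)\cong\mathbb{Z}$, hence $C_H(\beta)$ is again infinite cyclic, being a nontrivial finite-index subgroup of $\mathbb{Z}$ that contains $\beta$. Writing $\beta=(a,b)$, the identity $C_H(\beta)=C_A(a)\times C_B(b)\cong\mathbb{Z}$ forces one factor, say $C_B(b)$, to be trivial because $\mathbb{Z}$ is directly indecomposable; since $b\in C_B(b)$ this gives $b=e$, i.e. $\beta\in A$. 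Thus every pseudo-Anosov braid of $H$ lies in exactly one of the two factors.

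Next I would show that all pseudo-Anosov braids lie in the \emph{same} factor and that the complementary factor is trivial. If $\beta\in A$ and $\beta'\in B$ were both pseudo-Anosov, they would commute, so $\beta'\in C_H(\beta)\cong\mathbb{Z}$; then $\beta$ and $\beta'$ would be powers of a common generator $\gamma$, and torsion-freeness of the factors would place $\gamma$, hence $\beta'$, inside $A$, contradicting $\beta'\in B\setminus\{e\}$. Hence all pseudo-Anosov braids of $H$ lie in, say, $A$, so $B$ centralizes all of them. Finally I would invoke the existence of two \emph{independent} pseudo-Anosov braids $\beta,\beta'\in A$, available in any finite-index subgroup when $\chi<-1$: their infinite cyclic centralizers satisfy $C_H(\beta)\cap C_H(\beta')=\{e\}$, since a common nontrivial element would force $\beta,\beta'$ to share invariant foliations. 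As $B\subset C_H(\beta)\cap C_H(\beta')$, we conclude $B=\{e\}$, contradicting the assumption that both factors are nontrivial.

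The main obstacle lies in the reduction step rather than in the centralizer computation. One must ensure that the factors of $G$ contribute genuinely infinite, torsion-free pieces after passing to $K\cap G_i$, and one must separately handle a possible finite direct factor, since a priori $G$ itself need not be torsion-free even though $\mathbf{B}_n(\Sigma)$ is. The substantive geometric input---that pseudo-Anosov elements concentrate in a single factor and that two independent ones have trivial common centralizer---follows cleanly from Theorem~\ref{thm:pAcenter} together with the standard abundance of independent pseudo-Anosov braids in finite-index subgroups; but verifying these two facts carefully inside $H$ (existence, and independence yielding intersecting centralizers only at the identity) is where I expect the argument to demand the most attention.
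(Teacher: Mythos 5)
Your proof is correct and follows essentially the same strategy as the paper's: reduce to showing that a finite-index subgroup $H$ of $\mathbf{B}_n(\Sigma)$ has no nontrivial direct decomposition, produce a pseudo-Anosov $\beta\in H$ (the paper quotes \cite[Lemma~2.5]{Lo} for existence in any non-abelian finite-index subgroup), and play the infinite cyclic centralizer from Theorem~\ref{thm:pAcenter} against the product structure; the paper's version writes $\beta=\beta_1\beta_2$ with $\beta_i\in K_i$ and splits into the two cases ``both $\beta_i$ nontrivial'' (giving a $\mathbb{Z}^2$ inside $C(\beta)\simeq\mathbb{Z}$) and ``one $\beta_i$ trivial'' (forcing a factor to be abelian, hence a nontrivial center, contradicting centerlessness), which is the same content as your $C_{A\times B}((a,b))=C_A(a)\times C_B(b)$ argument.

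Two remarks. First, your final step is heavier than necessary: once every pseudo-Anosov element of $H$ lies in the factor $A$, you already have $B\subset C_H(\beta)=\langle\gamma\rangle$ where $\gamma$, being a root of $\beta$, is itself pseudo-Anosov; so every nontrivial element of $B$ is pseudo-Anosov and hence lies in $A\cap B=1$. No appeal to two independent pseudo-Anosov elements and the triviality of the intersection of their centralizers is needed (alternatively, as in the paper, $B$ abelian forces $Z(H)\neq1$, which is ruled out the same way). Second, the obstacle you single out---a possible nontrivial \emph{finite} direct factor of $G$---is genuine and is not actually resolved in the paper either: the paper simply asserts that $K\cap H_i$ is nontrivial for both $i$. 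Indeed $\mathbb{Z}/2\times\mathbf{B}_n(\Sigma)$ is abstractly commensurable with $\mathbf{B}_n(\Sigma)$ and directly decomposable, so Proposition~\ref{prop:indecomposable} is only correct under the reading ``no decomposition into two infinite factors,'' which is all that is used in Corollary~\ref{cor:centralizer}, Lemma~\ref{lem:converse}, and the proof of Theorem~\ref{thm:Characteristic}, where both putative factors are nontrivial torsion-free groups. Your instinct that this reduction step is where the care is needed was the right one; it just cannot be closed in the generality of the literal statement.
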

\begin{proof}
We first claim that any finite index subgroup of $\mathbf{B}_{n}(\Sigma)$ is directly indecomposable.

Let $K$ be a finite index subgroup of $\mathbf{B}_{n}(\Sigma)$ which is isomorphic to $K_1\times K_2$ for some nontrivial subgroups $K_1$ and $K_2$ of $K$.
Note that $\mathbf{B}_{n}(\Sigma)$ itself and any finite index subgroup of $\mathbf{B}_{n}(\Sigma)$
contain at least one pseudo-Anosov element $\beta$ since they are non-abelian \cite[Lemma~2.5]{Lo}. Hence there exist $\beta_i\in K_i$ such that they commute and $\beta=\beta_1\beta_2$.

If neither $\beta_1$ nor $\beta_2$ is trivial, then $C(\beta)\cap K$ contains a $\mathbb{Z}^2$ subgroup generated by $\beta_1$ and $\beta_2$. However $C(\beta)=\mathbb{Z}$ by Theorem~\ref{thm:pAcenter}, and so this is a contradiction.

If one of $\beta_i$ is trivial, then either $K_1$ or $K_2$ is contained in $C(\beta)\cap K\subset C(\beta)=\mathbb{Z}$ and it is abelian. This means that the center $Z(K)$ is nontrivial. However, this is contradiction because $K$ is centerless by Lemma~\ref{lem:centerBraid}~(3). Therefore $K$ is directly indecomposable and the claim is proved.

Now let $H$ be any group containing $K$ as a finite index subgroup. Suppose $H=H_1\times H_2$ as above. Then $K\cap H_i$ is a nontrivial finite index subgroup of  $H_i$ for each $i$, and therefore $(K\cap H_1)\times(K\cap H_2)$ is a finite index subgroup of $H$, and so of $K$, and furthermore of $\mathbf{B}_{n}(\Sigma)$. This is contradiction because any finite index subgroup of $\mathbf{B}_{n}(\Sigma)$ is directly indecomposable.
\end{proof}

Let $x\in X_i^*(k)$. Since the rank formula is invariant under taking conjugates and inverses, $rk(x)$ is equal to $rk(g)$ for some $g\in X_i(k)$.
Then it follows easily from the definition that $g$ is reducible and $rk(g)=1$. Indeed, the canonical reduction system $\mathcal{R}(g)$ is exactly same as the set defined in Section~\ref{sec:end}.

Conversely, suppose that $rk(\beta)=1$ for given $\beta\in\mathbf{P}_{n}(\Sigma)$.
If $\beta$ is pseudo-Anosov, then $C(\beta)=\mathbb{Z}$ by Theorem~\ref{thm:pAcenter} and there is nothing to do more.

Assume that $\beta$ is reducible.
If $pa(\beta)=1$, then $\mathcal{R}^0(\beta)=\emptyset$ and $[c_i]=\{c_i\}$ for all $1\le i\le k$. This cannot happened since, as recalled the argument just before the Proposition~\ref{prop:rank}, each $[c_i]$ consists of at least 2 curves. Therefore $pa(\beta)$ should be 0. 
Then $\beta$ is a product of nontrivial powers of Dehn twists along curves in $\mathcal{R}(\beta)$ by the Nielsen-Thurston classification, and by the same argument, either
$\mathcal{R}(\beta)=\{c\}$ where $c$ bounds a disc or once-punctured disc, or
$\mathcal{R}(\beta)=\mathcal{R}^*(\beta)=[c]=\{c,d\}$
where $c$ and $d$ are isotopic to each other in $\Sigma$.

The summary of the above discussion is as follows. 
For given reducible braid $\beta\in\mathbf{P}_n(\Sigma)$, we have $rk(\beta)=1$ if and only if either
\begin{enumerate}
\item $\mathcal{R}(\beta)=\{c\}$, where $c$ bounds a disc in $\Sigma$;
\item $\mathcal{R}(\beta)=\{c\}$, where $c$ bounds a once-punctured disc in $\Sigma$;
\item $\mathcal{R}(\beta)=\{c, d\}$, where $c\cup d$ bounds an annulus in $\Sigma$.
\end{enumerate}

We analyze each case in detail. Suppose $rk(\beta)=1$. We further assume that $[\beta]\neq0$ in $H_1(\mathbf{P}_{n}(\Sigma))$ when $\mathcal{R}(\beta)$ bounds an annulus. Then $\mathcal{R}(\beta)$ always separates $\Sigma$ into two pieces $\Sigma_+$ and $\Sigma_-$.
Moreover, we may assume that $\Sigma_+$ is either a disc, once-punctured disc, or an annulus.
Let $\overline\Sigma_+$ be the closure of $\Sigma_+$ in $\Sigma$ and $n_\pm=|\mathbf{z}\cap\Sigma_\pm|$.

Since $C(\beta)$ preserves $\mathcal{R}(\beta)$, we can consider the map $s_-:C(\beta)\to\mathcal{M}^*_{n_-}(\Sigma_-)$ induced from the restriction to $\Sigma_-$.
This is well-defined since the only possible ambiguities come from Dehn twists along $\mathcal{R}(\beta)$ but they are already trivial in $\mathcal{M}^*_{n_-}(\Sigma_-)$.
Notice that $s_-(\beta)$ is trivial since $\beta$ has no pseudo-Anosov components and the curves in $\mathcal{R}(\beta)$ are not essential in $\Sigma_-$. 
Then the map 
\[
i_-:\mathcal{M}^*_{n_-}(\overline\Sigma_-)\to C(\beta)\subset\mathcal{M}^*_n(\Sigma)
\]
induced from the embedding $(\Sigma_-,\mathbf{z}\cap\Sigma_-)\to(\Sigma,\mathbf{z})$ is injective \cite[Theorem~4.1]{PR2}.

On the other hand, the embedding $\Sigma_+\to\Sigma$ induce $i_+:\mathbf{P}_{n_+}(\Sigma_+)\to\mathbf{P}_{n}(\Sigma)$, which is injective by \cite[Theorem~2.3]{PR1}. 
Moreover, the image of $Z(\mathbf{P}_{n_+}(\Sigma_+))$ under $i_+$ is the same as $Z(\beta)$, and so $\beta$ can be regarded as a central element $\mathbf{P}_{n_+}(\Sigma_+)$.
Therefore via $i_+$, $C(\beta)$ contains all of $\mathbf{P}_{n_+}(\Sigma_+)$ which is $\ker(s_-)$ by definition, hence there exists a short exact sequence
\[
\xymatrix{
1\ar[r]&\mathbf{P}_{n_+}(\Sigma_+)\ar[r]& C(\beta)
\ar[r]^-{s_-}&\operatorname{im}(s_-)\ar[r]&1.
}
\]

In the first case, $\Sigma_+\simeq\mathbb{R}^2$ and $\Sigma_-\simeq\Sigma\setminus\{*\}$, where a point $*$ plays the same role of $\overline\Sigma_+$ in $\Sigma$.
For any $\alpha\in C(\beta)\subset\mathbf{P}_n(\Sigma)$, the element $s_-(\alpha)$ in $\mathcal{M}^*_{n_-}(\Sigma_-)$ is obtained by forgetting all marked points in $\Sigma_+$ except only one and can be regarded as an element in $\mathcal{M}^*_{n_-+1}(\Sigma)$. Moreover, by forgetting all remaining marked points, we have a trivial mapping class. This means that the image $s_-(\alpha)$ is lying in $\mathbf{P}_{n_-+1}(\Sigma)\subset\mathcal{M}^*_{n_-}(\Sigma_-)$ by Birman exact sequence.
Since $\mathcal{M}^*_{n_-}(\Sigma_-)\simeq\mathcal{M}^*_{n_-}(\overline\Sigma_-)/\langle t_c\rangle$, where $t_c$ is the Dehn twist along $c$, we have a commutative diagram
\[
\xymatrix{
\mathcal{M}^*_{n_-}(\overline\Sigma_-)\ar@{->}[r]^-{i_-}\ar[d] & C(\beta)\ar[d]_{s_-}\\
\mathcal{M}^*_{n_-}(\Sigma_-)\ar@/^1pc/@{..>}[u]^r &\mathbf{P}_{n_-+1}(\Sigma)\ar@{->}[l]\ar@/_1pc/@{..>}[u]_{\bar r},
}
\]
where all horizontal maps are injective.

Let us fix a function $r$ which is a right inverse of the vertical map.
Then for any $\alpha\in\mathbf{P}_{n_-+1}(\Sigma)$, the image $i_-(r(\alpha))$ commutes with $\beta$ in $\mathcal{M}^*_n(\Sigma)$ since the support of $i_-(r(\alpha))$ is contained in $\Sigma_-$.
Therefore $r$ induces a right inverse $\bar r:\mathbf{P}_{n_-+1}(\Sigma)\to C(\beta)$ of $s_-$ and so $\operatorname{im}(s_-)=\mathbf{P}_{n_-}(\Sigma)$.

However, $r$ and $\bar r$ are not homomorphisms in general. Indeed, the only possible obstruction for $r$ and $\bar r$ to be splitting maps is the element $t_c\in C(\beta)$.
Since $t_c$ correspond to $\Delta^2\in Z(\mathbf{P}_{n_+}(\mathbb{R}^2))\simeq Z(\beta)$ via $i_+$, the short exact sequence 
\[
\xymatrix{
1\ar[r]& \mathbf{P}_{n_+}(\mathbb{R}^2)_{/Z}\ar[r]& C(\beta)_{/Z}\ar[r]^-{s_-}& \mathbf{P}_{n_-+1}(\Sigma)\ar[r]& 1
}
\]
splits now and so
\[
C(\beta)_{/Z}\simeq\mathbf{P}_{n_+}(\mathbb{R}^2)_{/Z}\times\mathbf{P}_{n_-+1}(\Sigma).
\]

In the second case, $\Sigma_+\simeq A$ and $\Sigma_-\simeq \Sigma$.
Then the puncture in $\Sigma_+$ implies a kind of rigidity of $\overline\Sigma_+$ in $\Sigma$, and now $\overline\Sigma_+$ acts like a puncture, not a marked point as above.
We have $\operatorname{im}(s_-)\simeq\mathbf{P}_{n_-}(\Sigma)$ by the similar argument.
Since 
\[
\xymatrix{
\mathbf{P}_{n_-}(\Sigma)\simeq\mathbf{P}_{n_-}(\overline\Sigma_-)\subset\mathcal{M}^*_{n_-}(\overline\Sigma_-)\ar[r]^-{i_-} &C(\beta),
}
\]
the sequence 
\[
\xymatrix{
1\ar[r]&\mathbf{P}_{n_+}(A)\ar[r]&C(\beta)\ar[r]^-{s_-}&\mathbf{P}_{n_-}(\Sigma)\ar[r]&1
}
\]
splits and therefore
\[
C(\beta)\simeq\mathbf{P}_{n_+}(A)\times\mathbf{P}_{n_-}(\Sigma),\quad C(\beta)_{/Z}\simeq\mathbf{P}_{n_+}(A)_{/Z}\times\mathbf{P}_{n_-}(\Sigma).
\]

In the last case, $\Sigma_+\simeq A$ and $\Sigma_-$ is of genus $g-1$ with two more punctures.
Then $\operatorname{im}(s_-)\simeq\mathbf{P}_{n_-}(\Sigma_-)$ because two essential curves in $\mathcal{R}(\beta)$ act like two punctures in $\Sigma_-$. 
As before, we have
\[
\xymatrix{
\mathbf{P}_{n_-}(\Sigma_{g-1,p+2})\simeq\mathbf{P}_{n_-}(\overline\Sigma_-)\subset\mathcal{M}^*_{n_-}(\overline\Sigma_-)\ar[r]^-{i_-} &C(\beta),
}
\]
which is a splitting map of $s_-$. Therefore
\[
C(\beta)\simeq\mathbf{P}_{n_+}(A)\times\mathbf{P}_{n_-}(\Sigma_{g-1,p+2}), \quad C(\beta)_{/Z}\simeq\mathbf{P}_{n_+}(A)_{/Z}\times\mathbf{P}_{n_-}(\Sigma_{g-1,p+2}).
\]

\begin{rmk}\label{rmk:homnontrivial}
If we drop the nontriviality condition for $[\beta]\in H_1(\mathbf{P}_n(\Sigma))$, then $\mathcal{R}(\beta)$ decomposes $\Sigma$ into 3 pieces $\Sigma_+, \Sigma_-^1$ and $\Sigma_-^2$, where $\Sigma_-^i$ both have negative Euler characteristic. By exactly the same argument, one can show that
\[
C(\beta)\simeq\mathbf{P}_{n_+}(A)\times\mathbf{P}_{n_-^1}(\Sigma_-^1)\times\mathbf{P}_{n_-^2}(\Sigma_-^2).
\]
Therefore $C(\beta)$ as well as $C(\beta)_{/Z}$ are directly decomposable.
\end{rmk}

\begin{cor}\label{cor:centralizer}
Let $x\in X^*$. Then
$Z(x)=\langle x\rangle$
and
\[
C(x)\simeq\begin{cases}
Z(x)\text{-by-}\left(\mathbf{P}_{n-1}(\Sigma)\right)& x\in X^*(1);\\
Z(x)\times\mathbf{P}_{n-1}(\Sigma)& x\in X^*(2);\\
Z(x)\times\mathbf{P}_{n-1}(\Sigma_{g-1,p+2})& x\in X^*(3).
\end{cases}
\]
Therefore, $C(x)_{/Z}$ is directly indecomposable.
\end{cor}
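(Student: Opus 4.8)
The plan is to obtain the three cases of the corollary by specializing the classification of rank-one braids carried out just above to the generators $x\in X^*$. Recall that every $x\in X^*(k)$ is reducible with $rk(x)=1$, so the trichotomy for rank-one reducible braids applies; the only work is to determine, for each type, which of the three geometric configurations of the canonical reduction system occurs, together with the numbers $n_+$ and $n_-$ of marked points on the two sides.

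First I would match each type to a case via $end(x)$. If $x$ is of type $1$, then $end(x)=\{z_i,z_j\}$, so $\gamma(x)$ is isotopic to a single curve $c$ bounding a disc carrying exactly $z_i,z_j$ and no puncture; this is the disc case, with $\Sigma_+\simeq\mathbb{R}^2$, $n_+=2$, and $n_-=n-2$. If $x$ is of type $2$, then $end(x)=\{z_i,p_t\}$ and $c$ bounds a once-punctured disc carrying $z_i$, giving $\Sigma_+\simeq A$ with $n_+=1$. If $x$ is of type $3$, then $end(x)=\{z_i\}$ and, as a mapping class, $x$ is the push of $z_i$ around a non-separating simple closed curve, hence a product $t_ct_d^{-1}$ of opposite Dehn twists along the two boundary curves of an annular neighborhood; thus $\mathcal{R}(x)=\{c,d\}$ bounds an annulus, $n_+=1$, and $\Sigma_-\simeq\Sigma_{g-1,p+2}$.

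Feeding these values into the formulas already established yields the three displayed isomorphisms, after noting that the positive factor is infinite cyclic in each case: $\mathbf{P}_2(\mathbb{R}^2)\simeq\mathbf{P}_1(A)\simeq\mathbb{Z}$, generated via $i_+$ precisely by $x$ (for type $1$ the generator $\Delta^2$ of $\mathbf{P}_2(\mathbb{R}^2)$ is identified with $A_{i,j}$). Since by definition $Z(x)=Z(C(x))=i_+\!\left(Z(\mathbf{P}_{n_+}(\Sigma_+))\right)$, this simultaneously gives $Z(x)=\langle x\rangle$ and collapses the two sides: the type-$1$ exact sequence becomes the extension $1\to Z(x)\to C(x)\to\mathbf{P}_{n-1}(\Sigma)\to1$, while types $2$ and $3$ become the stated direct products with $n_-=n-1$.

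Finally, for direct indecomposability I would pass to $C(x)_{/Z}=C(x)/Z(C(x))$. In all three cases $Z(x)$ is exactly the center and, since $\chi(\Sigma)<-1$, the complementary quotient is centerless; hence $C(x)_{/Z}$ is isomorphic to $\mathbf{P}_{n-1}(\Sigma)$ for types $1$ and $2$ and to $\mathbf{P}_{n-1}(\Sigma_{g-1,p+2})$ for type $3$, each a pure braid group on a surface with $\chi=\chi(\Sigma)<0$. As a finite-index subgroup of the corresponding full braid group it is abstractly commensurable with the latter, so Proposition~\ref{prop:indecomposable} forces direct indecomposability. The step I expect to require the most care is the type-$3$ identification: one must verify that the push of $z_i$ around $a_r$ (or $b_r$) indeed has canonical reduction system $\{c,d\}$ bounding an annulus, and that $[x]\neq0$ in $H_1(\mathbf{P}_n(\Sigma))$, so that this annulus separates $\Sigma$ into two pieces rather than the three-piece degeneration of Remark~\ref{rmk:homnontrivial}; the homological nontriviality is exactly what makes $\Sigma_-$ connected of type $\Sigma_{g-1,p+2}$.
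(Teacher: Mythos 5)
Your proposal is correct and follows essentially the same route as the paper: the paper's proof simply specializes the preceding classification of rank-one reducible braids, noting $n_+=2$ for type $1$ and $n_+=1$ otherwise so that $\mathbf{P}_{n_+}(\Sigma_+)\simeq\mathbb{Z}\simeq Z(x)$, and then invokes Proposition~\ref{prop:indecomposable} because $C(x)_{/Z}$ is a surface pure braid group with $\chi<0$. The only difference is that you spell out details the paper leaves implicit, such as matching each type to its reduction-system configuration and checking $[x]\neq0$ in homology for type $3$; these checks are consistent with what is established earlier in the section.
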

\begin{proof}
This follows easily from the discussion above. Note that $n_+=2$ if $x\in X^*(1)$ and $n_+=1$ otherwise, and so in all cases $\mathbf{P}_{n_+}(\Sigma_+)$ is an infinite cyclic and isomorphic to $Z(x)$.

The direct indecomposability for $C(x)_{/Z}$ follows from Proposition~\ref{prop:indecomposable} since $C(x)_{/Z}$ is a pure braid group of some surface with $\chi<0$.
\end{proof}

\begin{lem}\label{lem:converse}
Let $\beta\in\mathbf{P}_{n}(\Sigma)$. Suppose that $rk(\beta)=1$ and $C(\beta)_{/Z}$ is nontrivial and directly indecomposable. Then either $C(\beta)\simeq \mathbf{P}_{n}(A)$ or $\beta\in X^*$, where $\mathbf{P}_{n}(A)$ is identified with a subgroup of $\mathbf{P}_{n}(\Sigma)$ via the injection induced from the embedding $A\to\Sigma$.
\end{lem}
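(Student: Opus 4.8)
The plan is to run a Nielsen--Thurston case analysis on $\beta$, built directly on the centralizer computations carried out just above. First I would dispose of the pseudo-Anosov case: by Theorem~\ref{thm:pAcenter} it would give $C(\beta)=Z(\beta)\simeq\mathbb{Z}$, hence $C(\beta)_{/Z}=1$, contradicting the nontriviality hypothesis. Since $\mathbf{P}_n(\Sigma)$ is torsion-free, $\beta$ is therefore reducible, and because $rk(\beta)=1$ it must realize one of the three reduction configurations isolated before the lemma: $\mathcal{R}(\beta)=\{c\}$ with $c$ bounding a disc, $\mathcal{R}(\beta)=\{c\}$ with $c$ bounding a once-punctured disc, or $\mathcal{R}(\beta)=\{c,d\}$ bounding an annulus. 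In the annulus configuration I would first invoke Remark~\ref{rmk:homnontrivial}: if $[\beta]=0$ in $H_1(\mathbf{P}_n(\Sigma))$ then $C(\beta)$, and hence $C(\beta)_{/Z}$, splits as a product of three nontrivial factors and is directly decomposable, against hypothesis; so I may assume $[\beta]\neq 0$ and work with the two-piece splitting $\Sigma=\Sigma_+\cup\Sigma_-$ and the explicit product descriptions of $C(\beta)_{/Z}$ recorded there.

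The engine of the argument is that in each configuration $C(\beta)_{/Z}$ is a direct product of two centerless pure braid groups, so direct indecomposability forces one of the two factors to be trivial while nontriviality keeps the other alive; I would read this off using the criterion $\mathbf{P}_m(\Sigma')_{/Z}=1$ iff $\kappa\le 3$ from Lemma~\ref{lem:centerBraid}. In the disc case $C(\beta)_{/Z}\simeq\mathbf{P}_{n_+}(\mathbb{R}^2)_{/Z}\times\mathbf{P}_{n_-+1}(\Sigma)$; the second factor is never trivial (as $n_-+1\ge 1$ and $\chi(\Sigma)<-1$), so the first must die, forcing $n_+=2$. In the once-punctured-disc case $C(\beta)_{/Z}\simeq\mathbf{P}_{n_+}(A)_{/Z}\times\mathbf{P}_{n_-}(\Sigma)$ and in the annulus case $C(\beta)_{/Z}\simeq\mathbf{P}_{n_+}(A)_{/Z}\times\mathbf{P}_{n_-}(\Sigma_{g-1,p+2})$; here, using $\chi(\Sigma_{g-1,p+2})=\chi(\Sigma)<-1$, the first factor is trivial exactly when $n_+=1$ and the second exactly when $n_-=0$, and since $n\ge 2$ these two options cannot occur simultaneously.

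Assembling the surviving possibilities yields the dichotomy. In the once-punctured-disc and annulus cases the choice $n_-=0$ puts all of $\mathbf{z}$ inside $\Sigma_+\simeq A$; because $\Sigma_+$ then collapses to a puncture, the complementary factor is $\mathbf{P}_0=1$ and $C(\beta)\simeq\mathbf{P}_n(A)$, the first alternative. In every other surviving case the subsurface carries $\mathbf{P}_{n_+}(\Sigma_+)\simeq\mathbb{Z}$ -- namely $n_+=2$ in the disc case, where the collapsed disc becomes a marked point so that $\mathbf{P}_{n_-+1}(\Sigma)$ persists, and $n_+=1$ in the remaining two cases -- and $\beta$, being central in $\mathbf{P}_{n_+}(\Sigma_+)$ via $i_+$, is a power of the generator of this $\mathbb{Z}$. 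That generator is a $\mathcal{PM}^*_n(\Sigma)$-image of $A_{i,j}$, of $\zeta_{i,t}$, or of one of $a_{i,r},b_{i,r}$ according to the three cases, i.e. an element of $X^*(1)$, $X^*(2)$ or $X^*(3)$; this gives the second alternative $\beta\in X^*$ and matches the centralizers listed in Corollary~\ref{cor:centralizer}.

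The care needed is twofold. The routine but essential bookkeeping is checking that the case list is exhaustive: that $pa(\beta)=0$ and that these are the only reducible configurations with $rk(\beta)=1$, which rests on Proposition~\ref{prop:rank} and on each class $[c_i]$ of essential curves containing at least two members, and that the $\kappa\le 3$ triviality test is applied correctly to each of $\mathbb{R}^2$, $A$, $\Sigma$ and $\Sigma_{g-1,p+2}$. The genuinely delicate point, which I expect to be the main obstacle, is the passage from ``$\beta$ is a nonzero power of an $X^*$-generator'' to ``$\beta\in X^*$'': the hypotheses determine $Z(\beta)=\langle w\rangle$ with $w\in X^*$ but a priori only give $\beta=w^k$ for some $k\neq 0$, since $w$ and $w^k$ share the same reduction system and centralizer. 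I would therefore isolate this as the crux, recording that the argument places $\beta$ in the infinite cyclic group generated by an element of $X^*$, and that sharpening the exponent to $\pm 1$ needs the abelianization (homological) input coming from the application rather than from the stated hypotheses alone.
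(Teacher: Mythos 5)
Your argument follows the paper's proof of this lemma essentially step for step: pseudo-Anosov braids are excluded via Theorem~\ref{thm:pAcenter}, the three reduction configurations give the three product decompositions of $C(\beta)_{/Z}$, and indecomposability together with nontriviality forces exactly one factor to die, yielding the stated dichotomy with the same bookkeeping ($n_+=2$, $n_+=1$, or $n_-=0$).

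The ``delicate point'' you isolate at the end is a genuine gap, and it is present in the paper's own proof, which simply asserts ``this happens only if $\beta\in X^*$'' after deducing $n_+=2$ or $n_+=1$. As you say, the argument actually delivers only that $\beta$ lies in $Z(\beta)=\langle w\rangle$ for some $w\in X^*$, i.e.\ $\beta=w^k$ with $k\neq 0$. Concretely, $\beta=A_{i,j}^{2}$ satisfies every hypothesis of the lemma ($C(A_{i,j}^2)=C(A_{i,j})$, so $rk(\beta)=1$ and $C(\beta)_{/Z}$ is nontrivial and directly indecomposable by Corollary~\ref{cor:centralizer}) yet lies in neither alternative of the conclusion, since $X^*$ by definition contains only $\mathcal{PM}^*_n(\Sigma)$-images of the generators and their inverses, not their powers. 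The lemma therefore needs either the weaker conclusion ``$\beta$ is a nonzero power of an element of $X^*$'' or the added hypothesis $Z(\beta)=\langle\beta\rangle$. Both applications survive: in Proposition~\ref{prop:scc} one has $Z(\phi(x))=\phi(Z(x))=\langle\phi(x)\rangle$, pinning the exponent to $\pm1$, and in the proof of Theorem~\ref{thm:Characteristic} only the homology class of $\psi(x^m)$ is used, so the exponent is absorbed into $\ell$. You were right not to paper over this step.

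There is a second soft spot, which you inherit from the paper without flagging. In the annulus configuration with $[\beta]=0$ you claim a splitting into three \emph{nontrivial} factors; Remark~\ref{rmk:homnontrivial} does not guarantee nontriviality. If the core of the annulus is a separating simple loop with all of $\mathbf{z}\setminus\{z_i\}$ on one side of it, one factor is $\mathbf{P}_0=1$ and $C(\beta)_{/Z}$ is again directly indecomposable, so point-pushes along separating loops are not excluded; correspondingly, your identification of the annulus core with a $\mathcal{PM}^*_n(\Sigma)$-image of some $a_{i,r}$ or $b_{i,r}$ silently assumes the core is non-separating. In the intended application this case is ruled out because $C(\phi(x))_{/Z}$ must be isomorphic to $C(x)_{/Z}\simeq\mathbf{P}_{n-1}(\Sigma_{g-1,p+2})$ while a separating core yields the pure braid group of a surface of strictly larger Euler characteristic, but the hypotheses of the lemma as stated do not supply that information.
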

\begin{proof} 
By the discussion above and Remark~\ref{rmk:homnontrivial}, for some $n_\pm$, $C(\beta)_{/Z}$ is either
\[
\mathbf{P}_{n_+}(\mathbb{R}^2)_{/Z}\times\mathbf{P}_{n_-+1}(\Sigma),\quad
\mathbf{P}_{n_+}(A)_{/Z}\times\mathbf{P}_{n_-}(\Sigma),\text{ or }
\]
\[
\mathbf{P}_{n_+}(A)_{/Z}\times\mathbf{P}_{n_-}(\Sigma_{g-1,p+2}),
\]
and moreover, one of two factors must be trivial in each case.

If $\mathbf{P}_{n_+}(\mathbb{R}^2)_{/Z}$ or $\mathbf{P}_{n_+}(A)$ is trivial, then $n_+=2$ or $n_+=1$, respectively. This happens only if $\beta\in X^*$.

Otherwise, the only possibilities are the second and third cases with $n_-=0$
since $\mathbf{P}_{n_-+1}(\Sigma)$ is never trivial.
Hence $n_+=n$, and $C(\beta)\simeq\mathbf{P}_{n}(A)$ via the injection $i_+$.
\end{proof}

Now we are ready to prove Proposition~\ref{prop:scc}.

\begin{proof}[Proof of Proposition~\ref{prop:scc}]
Let $x\in X$ and let $\beta=\phi(x)$. Then $rk(x)=rk(\beta)=1$ and $C(x)_{/Z}$ is nontrivial and directly indecomposable by Corollary~\ref{cor:centralizer}, as is $C(\beta)_{/Z}$. Therefore either $C(\beta)\simeq \mathbf{P}_{n}(A)$ or $\beta\in X^*$ by Lemma~\ref{lem:converse}.

If $C(\beta)\simeq \mathbf{P}_{n}(A)$, then 
\begin{align*}
\mathbf{P}_{n-1}(\Sigma_{0,3})&\simeq C(\beta)_{/Z}\stackrel{\phi}{\simeq}
\begin{cases}
\mathbf{P}_{n-1}(\Sigma) & x\in X(1)\cup X(2);\\
\mathbf{P}_{n-1}(\Sigma_{g-1,p+2}) & x\in X(3).
\end{cases}
\end{align*}
This implies that $(g,p)$ is either $(0,3)$ or $(1,1)$, which are excluded by the hypothesis $\chi<-1$. Therefore $\beta\in X^*$.
\end{proof}

\section{Proof of Theorem~\ref{thm:Characteristic}}
Recall Theorem~\ref{thm:Characteristic} that $\mathbf{P}_n(\Sigma)$ is a characteristic subgroup of $\mathbf{B}_n(\Sigma)$ except for $(\Sigma,n)=(A,2)$.
Let us review the known results briefly. Since $\mathbf{P}_1(\Sigma)=\mathbf{B}_1(\Sigma)$, there is nothing to prove for $n=1$.

\begin{thm}\cite{Iv1}\label{thm:PnCharacteristic}
Let $n\neq2$ and $\Sigma$ be any (possibly non-orientable) surface of finite type. If $n=4$, assume furthermore that $\Sigma$ does not embed in $S^2$. Then $\mathbf{P}_n(\Sigma)$ is a characteristic subgroup of $\mathbf{B}_n(\Sigma)$.
\end{thm}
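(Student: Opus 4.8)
The plan is to recast the characteristic property as a uniqueness statement for the projection $\rho\colon\mathbf{B}_n(\Sigma)\to\mathbf{S}_n$, whose kernel is $\mathbf{P}_n(\Sigma)$. For any $\phi\in\Aut(\mathbf{B}_n(\Sigma))$ one computes $\phi(\mathbf{P}_n(\Sigma))=\ker(\rho\circ\phi^{-1})$, so $\phi(\mathbf{P}_n(\Sigma))$ is a normal subgroup $N$ with $\mathbf{B}_n(\Sigma)/N\cong\mathbf{S}_n$ that is moreover abstractly isomorphic to $\mathbf{P}_n(\Sigma)$ via $\phi$. Thus it suffices to show that $\mathbf{P}_n(\Sigma)$ is the only normal subgroup enjoying both properties; equivalently, that every surjection $\psi\colon\mathbf{B}_n(\Sigma)\to\mathbf{S}_n$ with $\ker\psi\cong\mathbf{P}_n(\Sigma)$ agrees with $\rho$ up to post-composition by an automorphism of $\mathbf{S}_n$. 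Since post-composition never alters the kernel, this reduction also explains why the outer automorphism of $\mathbf{S}_6$ produces no exception.

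Next I would extract the constraints imposed by Bellingeri's presentation (Theorem~\ref{thm:presentation}). The band generators $\sigma_1,\dots,\sigma_{n-1}$ are pairwise conjugate and satisfy the braid relations, whereas the surface generators $a_r,b_r,\zeta_t$ lie in $\mathbf{P}_n(\Sigma)$ and couple to the $\sigma_i$ only through the commutation relations, $\mathrm{(SCR)}$ and $\mathrm{(TR)}$. Applying $\psi$, the images $\psi(\sigma_i)$ form a system of $n-1$ pairwise conjugate elements of $\mathbf{S}_n$ obeying the braid relations, and such a system is highly rigid: either all $\psi(\sigma_i)$ coincide (the braiding collapses), or they are $\Aut(\mathbf{S}_n)$-equivalent to the standard transpositions $(i,i+1)$ and already generate $\mathbf{S}_n$. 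In the latter branch the relations $\mathrm{(CR_1)}$ force $\psi(a_r),\psi(b_r),\psi(\zeta_t)$ into the centralizer in $\mathbf{S}_n$ of the standard copy $\langle\psi(\sigma_2),\dots,\psi(\sigma_{n-1})\rangle\cong\mathbf{S}_{n-1}$, which is trivial once $n\ge4$; hence the surface generators die and $\psi=\theta\circ\rho$, so $\ker\psi=\mathbf{P}_n(\Sigma)$.

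The main obstacle is to exclude the complementary collapse surjections, in which all $\psi(\sigma_i)$ coincide and the surface generators alone realize the surjectivity onto $\mathbf{S}_n$. Such a kernel contains every $\sigma_i\sigma_{i+1}^{-1}$ and is structurally unlike $\mathbf{P}_n(\Sigma)$, but ruling it out genuinely requires an algebraic comparison rather than a counting argument, since both kernels have index $n!$. I would invoke invariants preserved by $\phi$ --- for instance the rank of $H_1$, the structure of nilpotent quotients, or the centralizer structure of pseudo-Anosov elements (Theorem~\ref{thm:pAcenter}) --- to certify that a collapse kernel is never isomorphic to $\mathbf{P}_n(\Sigma)$. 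Because $\ker(\rho\circ\phi^{-1})\cong\mathbf{P}_n(\Sigma)$, no collapse map can then arise, leaving only the standard surjections with kernel $\mathbf{P}_n(\Sigma)$.

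The hard part is the low-degree analysis, and it is exactly here that the hypotheses enter. For $n\ge5$ the simplicity of $A_n$ and the rigidity of transposition systems make the rigidity step clean, while $n=3$ needs separate treatment because the centralizer computation that kills the surface generators degenerates ($\mathbf{S}_{n-1}=\mathbf{S}_2$ is abelian). The delicate degree is $n=4$, where $\mathbf{S}_4$ fails to be rigid: it carries the normal Klein four-subgroup $V_4$ and the exceptional quotient $\mathbf{S}_4\twoheadrightarrow\mathbf{S}_3$, so the classification of braid-relation systems admits extra solutions; when $\Sigma$ embeds in $S^2$ the additional torsion and central ($\Delta^2$-type) relations of the spherical braid group further obstruct the argument, which is precisely why one imposes $\Sigma\not\hookrightarrow S^2$ in that case. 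Finally $n=2$ is excluded outright, since then $\mathbf{S}_2=\mathbb{Z}_2$ makes $\mathbf{P}_2(\Sigma)$ merely an index-two subgroup classified by a class in $H^1(\mathbf{B}_2(\Sigma);\mathbb{Z}_2)$ that need not be $\Aut$-invariant, and indeed fails to be so on $\Sigma_{0,2}$.
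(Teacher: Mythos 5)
This statement is not proved in the paper: it is quoted from Ivanov \cite{Iv1}, whose paper on permutation representations of surface braid groups is precisely the reference for it, so there is no internal argument to compare against. Your reduction is the correct one and is in fact Ivanov's: since $\phi(\mathbf{P}_n(\Sigma))=\ker(\rho\circ\phi^{-1})$, characteristicity follows once every surjection $\psi\colon\mathbf{B}_n(\Sigma)\to\mathbf{S}_n$ (with kernel abstractly isomorphic to $\mathbf{P}_n(\Sigma)$) is shown to have kernel equal to $\mathbf{P}_n(\Sigma)$. The problem is that your write-up leaves gaps exactly where the real work lies.

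First, the asserted dichotomy for the images $\psi(\sigma_i)$ (``either all coincide, or $\Aut(\mathbf{S}_n)$-equivalent to the standard transpositions'') is the heart of the matter and you give no argument for it. It is an Artin-type rigidity statement, and Artin's theorem does not apply off the shelf: it classifies \emph{transitive} representations of the classical braid group, whereas here the subgroup $\langle\psi(\sigma_1),\dots,\psi(\sigma_{n-1})\rangle$ need not act transitively on $\{1,\dots,n\}$, because the surface generators can supply the missing transitivity. One must therefore also exclude configurations in which the $\psi(\sigma_i)$ are distinct but generate an intransitive or otherwise non-standard subgroup; this is where Ivanov's paper spends its effort and where the hypotheses on $n$ actually bite. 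Second, your treatment of the collapse branch is only a plan (``I would invoke invariants\dots''), not a proof; ironically this branch is easy and needs no comparison of kernels: if all $\psi(\sigma_i)$ equal $s$, then $(\mathrm{CR}_1)$ makes $s$ commute with every generator of the image, so $s\in Z(\mathbf{S}_n)=1$ for $n\ge3$, and then $(\mathrm{CR}_3)$ and $(\mathrm{SCR})$ force the image to be abelian, contradicting surjectivity. Third, the cases $n=3$ and $n=4$, which are part of the statement being proved, are explicitly deferred (``needs separate treatment'', ``delicate'') and never completed; for $n=3$ your centralizer step genuinely fails since $C_{\mathbf{S}_3}(\mathbf{S}_2)\neq1$, and for $n=4$ the exceptional surjections of $\mathbf{B}_4$ onto $\mathbf{S}_4$ through $4$-cycles must actually be eliminated using the hypothesis that $\Sigma$ does not embed in $S^2$, not merely pointed at. As it stands the proposal is a plausible outline of the known strategy with the decisive steps missing.
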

The above theorem covers all but a few exceptional cases.
Indeed, if $g>0$ and $n>2$ then $\mathbf{P}_{n}(\Sigma)$ is characteristic by the above theorem.

\begin{lem}\label{lem:2char}
The pure $2$-braid group $\mathbf{P}_2(\Sigma)$ is a characteristic subgroup of $\mathbf{B}_2(\Sigma)$ unless $\Sigma$ is a $p$-punctured sphere for some $p\ge 1$.
\end{lem}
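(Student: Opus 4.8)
The plan is to identify $\mathbf{P}_2(\Sigma)$ with the kernel of the mod-$2$ reduction $\bar\rho\colon\mathbf{B}_2(\Sigma)\to\mathbb{Z}_2$ of the induced permutation, so that $\mathbf{P}_2(\Sigma)$ is an index-$2$ subgroup and the problem becomes showing that the class $\bar\rho\in\operatorname{Hom}(\mathbf{B}_2(\Sigma),\mathbb{Z}_2)$ is fixed by the natural action of $\Aut(\mathbf{B}_2(\Sigma))$. Since $\Sigma$ is not a punctured sphere we may assume $g\ge 1$ (the sphere $S^2$ has $\kappa=2<4$ and is degenerate), and here the genus is decisive: abelianizing Bellingeri's presentation (Theorem~\ref{thm:presentation}), the relation \textrm{(SCR)} forces $2\sigma_1=0$ while every remaining generator survives freely, so the torsion subgroup of $H_1(\mathbf{B}_2(\Sigma))$ is exactly $\langle\sigma_1\rangle\cong\mathbb{Z}_2$. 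It is precisely this torsion that is absent for punctured spheres, which explains both their exclusion and why $\mathbf{P}_2(\Sigma_{0,2})$ is a genuine exception.

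First I would use this torsion to constrain $\Phi(\sigma_1)$ for any $\Phi\in\Aut(\mathbf{B}_2(\Sigma))$. Since $[\mathbf{B}_2(\Sigma),\mathbf{B}_2(\Sigma)]$ is characteristic and the torsion subgroup of $H_1$ is canonical with unique nontrivial element $\bar\sigma_1$, the induced automorphism on $H_1$ must fix $\bar\sigma_1$; hence $\Phi(\sigma_1)\in\sigma_1[\mathbf{B}_2(\Sigma),\mathbf{B}_2(\Sigma)]$, and the same holds for $\Phi^{-1}$. Because $\bar\rho$ kills the commutator subgroup and takes the value $1$ on this coset, writing $f=\bar\rho\circ\Phi^{-1}$ gives $\Phi(\mathbf{P}_2(\Sigma))=\ker f$ with $f(\sigma_1)=1$; equivalently $f=\bar\rho+\ell$, where $\ell$ factors through the free part $\mathbb{Z}^{m}$ of $H_1$ and $m=\operatorname{rank}H_1(\mathbf{B}_2(\Sigma))$. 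Thus $\Phi$ can at most move $\mathbf{P}_2(\Sigma)$ among the $2^{m}$ index-$2$ subgroups not containing $\sigma_1$.

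It then remains to separate $\mathbf{P}_2(\Sigma)=\ker\bar\rho$ from the subgroups $\ker(\bar\rho+\ell)$ with $\ell\neq0$. As an automorphism carries $\mathbf{P}_2(\Sigma)$ to an isomorphic subgroup, it suffices to show these are pairwise non-isomorphic, and I would do this by comparing abelianizations. The cover corresponding to $\bar\rho$ is the ordered configuration space, so by Goldberg's theorem (Theorem~\ref{thm:Goldberg}) $H_1(\mathbf{P}_2(\Sigma))$ surjects onto the ``doubled'' homology $H_1(\Sigma)\oplus H_1(\Sigma)$ coming from the two ordered points, whereas the covers attached to $\bar\rho+\ell$ do not split the two points. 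Concretely I would compute $H_1(\ker(\bar\rho+\ell))$ either by Reidemeister--Schreier from the presentation or through the transfer decomposition $H_1(\ker f;\mathbb{Q})\cong H_1(\mathbf{B}_2(\Sigma);\mathbb{Q})\oplus H_1(\mathbf{B}_2(\Sigma);\mathbb{Q}_f)$, where $\mathbb{Q}_f$ is the sign representation, and read off that the rank or torsion differs from that of $\mathbf{P}_2(\Sigma)$ whenever $\ell\neq0$. This forces $\Phi(\mathbf{P}_2(\Sigma))=\mathbf{P}_2(\Sigma)$.

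The main obstacle is exactly this last comparison: one must verify, uniformly in $g\ge1$ and $p\ge0$, that the ordered-configuration double cover is never isomorphic to any of the other $2^{m}-1$ double covers of the unordered configuration space. This is the step that genuinely uses $g\ge1$ through the nontrivial torsion of $H_1$, and it is where the argument collapses for $\Sigma_{0,2}$. Should the first Betti numbers happen to coincide for some $\ell$, I would fall back on finer invariants—the torsion of $H_1(\ker(\bar\rho+\ell))$, or the structure of its center, which for $\mathbf{P}_2(\Sigma)$ is pinned down by Lemma~\ref{lem:centerBraid}—to complete the separation.
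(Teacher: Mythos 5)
Your first paragraph and the first half of your second are, in substance, the paper's entire proof: one checks from Bellingeri's presentation (Theorem~\ref{thm:presentation}) that for $g\ge 1$ the relator $\rm(SCR)$ forces $2[\sigma_1]=0$ while the remaining generators survive freely, so the torsion subgroup of $H_1(\mathbf{B}_2(\Sigma))$ is $\langle[\sigma_1]\rangle\simeq\mathbb{Z}_2$; the paper then concludes at once that, $\rho$ being ``the abelianization followed by projection onto the torsion factor,'' every automorphism preserves $\ker\rho=\mathbf{P}_2(\Sigma)$. You are right to be uneasy about that last inference: the torsion \emph{subgroup} of $\mathbb{Z}_2\oplus\mathbb{Z}^m$ is characteristic, but a complement to it is not, and an automorphism fixing the torsion element may still send a free generator $x$ to $x+[\sigma_1]$, which would move $\ker\rho$ to one of the other $2^m-1$ index-two subgroups on which $\sigma_1$ survives. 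Recognizing that this is where the actual content of the lemma lies is a genuine insight, and it is a point the paper's two-line argument does not address.

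The problem is that your proposal stops exactly there. The separation of $\ker\bar\rho$ from the subgroups $\ker(\bar\rho+\ell)$ with $\ell\neq 0$ is announced as ``compute $H_1$ by Reidemeister--Schreier or the transfer and read off that the rank or torsion differs,'' together with an explicit fallback (``should the first Betti numbers happen to coincide\dots'') in case that fails. No computation is carried out, uniformly in $g\ge 1$ and $p\ge 0$ or even in a single example, and this step is all of the mathematical content beyond what the paper already records; as written it is a research plan with its decisive step deferred, not a proof. Note also two further issues: your route establishes more than is needed --- abstract non-isomorphism of all $2^m$ double covers, rather than merely that no automorphism of $\mathbf{B}_2(\Sigma)$ carries $\ker\bar\rho$ to $\ker(\bar\rho+\ell)$ --- which makes the missing verification harder, not easier, and leaves open in advance whether it is even true; and you silently restrict to orientable surfaces, whereas Theorem~\ref{thm:Characteristic} and Table~\ref{tab:char1} need this lemma for non-orientable $\Sigma$ as well (the paper invokes Bellingeri's non-orientable presentations precisely to get the $\mathbb{Z}_2$ torsion there). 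Until the homological separation is actually proved for every $\ell\neq 0$, the proposal cannot be accepted.
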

\begin{proof}
The induced permutation $\rho:\mathbf{B}_2(\Sigma)\to \mathbf{S}_2\simeq\mathbb{Z}_2$ factors through
$H_1(\mathbf{B}_{2}(\Sigma))$ whose torsion subgroup is precisely $\mathbb{Z}_2$ unless $\Sigma$ is a $p$-punctured sphere.
This follows directly from the group presentation of $\mathbf{B}_2(\Sigma)$. See Theorem~\ref{thm:presentation} for orientable cases and \cite[Theorem~A.2, Theorem~A.3]{Bel1} for non-orientable cases.

Hence the map $\rho$ is nothing but a composition of the abelianization and projection onto the torsion factor $\mathbb{Z}_2$. Therefore any automorphism on $\mathbf{B}_2(\Sigma)$ preserves the kernel $\mathbf{P}_{2}(\Sigma)$ of $\rho$.
\end{proof}

Bellingeri showed in \cite{Bel2} that $\mathbf{P}_4(S^2)$ is characteristic, hence the cases when $p=0$ or $g>0$ are done.
In \cite{Art}, E.~Artin showed that the classical pure braid group $\mathbf{P}_{n}(\mathbb{R}^2)$ is characteristic.
Furthermore, by Bell and Margalit \cite{BM}, Charney and Crisp \cite{CC}, for $\Sigma=\Sigma_{0,p}$ with $p+n\ge 5$ and $p\le 3$, $\mathbf{P}_{n}(\Sigma)$ is characteristic because $\Aut(\mathbf{B}_{n}(\Sigma))$ can be expressed by using $\mathcal{M}^*_{n}(\Sigma)$ and $\tv(\mathbf{B}_{n}(\Sigma))$ which preserve $\mathbf{P}_{n}(\Sigma)$.

\begin{lem}\label{lem:notchar}
The pure braid group $\mathbf{P}_{2}(A)$ of the twice-punctured sphere is not a characteristic subgroup of $\mathbf{B}_{2}(A)$.
\end{lem}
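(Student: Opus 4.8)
The plan is to exhibit an explicit automorphism of $\mathbf{B}_2(A)$ that carries a pure braid to a non-pure one; this shows at once that $\mathbf{P}_2(A)=\ker\rho$ is not invariant under all of $\Aut(\mathbf{B}_2(A))$, hence not characteristic.

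First I would simplify the group. Specializing Theorem~\ref{thm:presentation} to $(g,p,n)=(0,2,2)$ gives generators $\sigma_1,\zeta_1,\zeta_2$ together with the relators $\rm(CR_2)$ for $t=1,2$, $\rm(CR_3)$ for $(t,u)=(1,2)$, and $\rm(TR)$, which here reads $\sigma_1^2\zeta_1\zeta_2=e$. The relator $\rm(TR)$ lets me eliminate $\zeta_2=\zeta_1^{-1}\sigma_1^{-2}$, while $\rm(CR_2)$ for $t=1$ reads exactly $\sigma_1\zeta_1\sigma_1\zeta_1=\zeta_1\sigma_1\zeta_1\sigma_1$. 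Writing $\sigma=\sigma_1$ and $\zeta=\zeta_1$, I would check that the two surviving relators become consequences of this one, so that
\[
\mathbf{B}_2(A)\cong\langle\,\sigma,\zeta\mid \sigma\zeta\sigma\zeta=\zeta\sigma\zeta\sigma\,\rangle,
\]
recovering the isomorphism $A(B_2)\simeq\mathbf{B}_2(\Sigma_{0,2})$ already recorded in the introduction. Alternatively one may simply invoke that isomorphism, identifying the two Artin generators with $\sigma$ (the half-twist) and $\zeta$ (a loop around a puncture).

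Next I would observe that the single defining relator is symmetric under interchanging the two generators: applying the transposition $\sigma\leftrightarrow\zeta$ to the word $\sigma\zeta\sigma\zeta(\zeta\sigma\zeta\sigma)^{-1}$ yields the inverse of the same relator. Hence the assignment $\phi(\sigma)=\zeta$, $\phi(\zeta)=\sigma$ respects the relation and defines an endomorphism of $\mathbf{B}_2(A)$; since $\phi^2=\mathrm{Id}$, it is in fact an involutive automorphism. I note in passing that this $\phi$ is genuinely new: both the geometric automorphisms coming from $\mathcal{M}^*_2(A)$ and the transvections in $\tv(\mathbf{B}_2(A))$ preserve $\rho$, so $\phi$ represents the extra factor $G_2(A)\simeq\mathbb{Z}_2$ of Theorem~\ref{thm:AutBn}.

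Finally I would read off the induced permutations $\rho(\sigma)=(1\,2)$ and $\rho(\zeta)=e$, since the half-twist swaps the two strands but a loop around a puncture is a pure braid. Thus $\zeta\in\mathbf{P}_2(A)$ while $\phi(\zeta)=\sigma\notin\mathbf{P}_2(A)$, so $\phi(\mathbf{P}_2(A))\neq\mathbf{P}_2(A)$ and $\mathbf{P}_2(A)$ is not characteristic. The only real obstacle lies in the first step, namely verifying that after eliminating $\zeta_2$ the relators $\rm(CR_2)$ for $t=2$ and $\rm(CR_3)$ are redundant modulo the $B_2$-braid relation; this is a finite computation in the free group on $\sigma,\zeta$, and it can be bypassed entirely by citing the known two-generator presentation of $A(B_2)$.
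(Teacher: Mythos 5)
Your proposal is correct and follows essentially the same route as the paper: both reduce to the two-generator one-relator presentation $\langle\sigma_1,\zeta_1\mid\zeta_1\sigma_1\zeta_1\sigma_1=\sigma_1\zeta_1\sigma_1\zeta_1\rangle$, observe the symmetry of the relator under swapping the generators, and note that the swap cannot preserve $\ker\rho$ since $\rho(\sigma_1)$ is the transposition while $\rho(\zeta_1)$ is trivial. The extra care you take in deriving the presentation from Theorem~\ref{thm:presentation} is a detail the paper simply asserts, but the argument is the same.
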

\begin{proof}
Note that $\mathbf{B}_{2}(A)$ admits the following group presentation.
\[
\mathbf{B}_{2}(A)=\langle\sigma_1,\zeta_1|\zeta_1\sigma_1\zeta_1\sigma_1 = \sigma_1\zeta_1\sigma_1\zeta_1\rangle,
\]
where $\rho$ maps $\sigma_1$ to the generator for $\mathbf{S}_2$ and $\zeta_1$ to trivial.

However, since there is no difference between $\sigma_1$ and $\zeta_1$ in the presentation, one can consider the automorphism which interchanges them. It is obvious that this automorphism does not preserve the kernel of $\rho$.
\end{proof}

\begin{proof}[Proof of Theorem~\ref{thm:Characteristic}]
It suffices to prove the theorem only for $g=0, p\ge 4$ and $n\ge 2$.

Let $\Sigma=\Sigma_{0,p}$ and $\psi\in\Aut(\mathbf{B}_{n}(\Sigma))$.
To prove that $\psi(\mathbf{P}_{n}(\Sigma))\subset\mathbf{P}_{n}(\Sigma)$, we claim that $\psi(X(2))\subset X^*(2)$. 
Then by Corollary~\ref{cor:quotient}, $\psi$ induces an isomorphism
\[
\mathbf{B}_{n}(\Sigma)/\langle\!\langle \zeta_t\rangle\!\rangle\simeq \mathbf{B}_{n}(S^2).
\]
Since $\rho(\zeta_t)$ is trivial, this quotient map commutes with $\rho$. Then we can use Bellingeri's result for $\mathbf{B}_n(S^2)$ to complete the proof.

Now we prove the claim as follows.
Let $x=\zeta_{i,t}\in X(2)$ and $m\ge1$ be the smallest integer so that $\psi(x^m)\in\mathbf{P}_{n}(\Sigma)$.
Recall that $C(x)$, $Z(x)$ and $rk(x)$.
Then $C(x)=C(x^m)$ and $Z(x)=Z(x^m)=\langle x\rangle$.

Consider two subgroups $\psi(C(x^m))$ and $C(\psi(x^m))$ of $\psi(\mathbf{P}_{n}(\Sigma))$ and $\mathbf{P}_{n}(\Sigma)$
and let $C=\psi(C(x^m))\cap C(\psi(x^m))$. The first observation is that 
\[
C=\psi(C(x^m))\cap \mathbf{P}_{n}(\Sigma) = \psi(\mathbf{P}_{n}(\Sigma))\cap C(\psi(x^m))
\]
and so $C$ is a finite index subgroup of both $\psi(C(x^m))$ and $C(\psi(x^m))$.
Then the tower $Z(C)\subset C\subset \psi(C(x^m))$ induces the tower
\[
\frac{Z(C)}{\psi(Z(x^m))\cap C} \subset \frac{C}{\psi(Z(x^m))\cap C}\subset \frac{\psi(C(x^m))}{\psi(Z(x^m))}\simeq \mathbf{P}_{n-1}(\Sigma).
\]

The group in the left is contained in the center of the middle which is a finite index subgroup of the right which is isomorphic to $\mathbf{P}_{n-1}(\Sigma)$. However, the center of any finite index subgroup of $\mathbf{P}_{n-1}(\Sigma)$ is trivial by Lemma~\ref{lem:centerBraid}~(3), and so 
\[
Z(C)=\psi(Z(x^m))\cap C =\langle \psi(x^m)\rangle.
\]

On the other hand, the tower $\langle \psi(x^m)\rangle \subset Z(\psi(x^m))\cap C \subset Z(C)$ gives us $Z(\psi(x^m))\cap C = Z(C)=\langle \psi(x^m)\rangle$. Since $Z(\psi(x^m))$ contains $Z(\psi(x^m))$ as a finite index subgroup, it is infinite cyclic. Therefore $rk(\psi(x^m))=1$.

Moreover, $C(\psi(x^m))/Z(\psi(x^m))$ contains $C/Z(\psi(x^m))\cap C$ as a finite index subgroup, which is also a finite index subgroup of $\psi(C(x^m))/\psi(Z(x^m))\simeq\mathbf{P}_{n-1}(\Sigma)$. In other words, $C(\psi(x^m))/Z(\psi(x^m))$ is abstractly commensurable to $\mathbf{P}_{n-1}(\Sigma)$, and therefore it is directly indecomposable by Proposition~\ref{prop:indecomposable}.

Now we apply Lemma~\ref{lem:converse} to $\psi(x^m)$. Either $C(\psi(x^m))\simeq\mathbf{P}_{n}(A)$ or $\psi(x^m)\in X^*$.
Suppose $C(\psi(x^m))\simeq\mathbf{P}_{n}(A)$. Then $\psi(x^m)$ is central in $\mathbf{P}_n(A)$ and so in $\mathbf{B}_n(A)$ as well. Hence it is easy to show that
\begin{align*}
\langle x\rangle\times \mathbf{B}_{n-1}(\Sigma)&=
C_{\mathbf{B}_{n}(\Sigma)}(x)
\simeq \psi \left(C_{\mathbf{B}_{n}(\Sigma)}(x^m)\right)\\
&=C_{\mathbf{B}_{n}(\Sigma)}(\psi(x^m))
=\mathbf{B}_{n}(A).
\end{align*}
This is impossible because $\chi<-1$, and so $\psi(x^m)\in X^*$.

Consider the homology class $\bar\psi[x]=[\psi(x)]$ in $H_1(\mathbf{B}_{n}(\Sigma))$.
Then $m\bar\psi[x]$ is either $\ell[\zeta_u]$ or $\ell[A_{i,j}] = 2\ell[\sigma_1]$ for some $u$ and $\ell\neq 0$.
However since $[\sigma_1]$ and $[\zeta_u]$ are primitive, either $\bar\psi[x]=\pm[\zeta_u]$ when $m=1$ or $\pm[\sigma_1]$ when $m=2$.

In summary, what we have shown above is that for any $1\le t\le p$, $\bar\psi([\zeta_t])$ is either $\pm[\zeta_u]$ for some $u$ or $\pm[\sigma_1]$. 
Note that $\bar\psi([\zeta_t])=\pm[\sigma_1]$ for at most one $t$ because all $[\zeta_t]$'s are pairwise linearly independent. 
Otherwise, if there is no such $t$, then we are done.

Suppose $\bar\psi([\zeta_t])=\pm[\sigma_1]$ for some $t$.
Recall that $H_1(\mathbf{B}_{n}(\Sigma))$ is generated by the $(p+1)$ homology classes
$
\{[\sigma_1],[\zeta_1],\dots,[\zeta_p]\}
$
with a single defining relator
\begin{equation}\label{eqn:HTR}\tag{HTR}
2(n-1)[\sigma_1]+\sum_{u=1}^p[\zeta_u]=0
\end{equation}
coming from (TR).

We choose a basis $\{[\sigma_1], [\zeta_u]|u\neq t\}$ for $H_1(\mathbf{B}_{n}(\Sigma))$ and let 
\[
\bar\psi([\sigma_1])=c_0[\sigma_1]+\sum_{u\neq t} c_u[\zeta_u]
\]
 for some $c_i\in\mathbb{Z}$.
Then by applying $\bar\psi$ to the defining relator (\ref{eqn:HTR}), we have
\begin{align*}
0&=\bar\psi\left(2(n-1)[\sigma_1]+\sum_{u=1}^p[\zeta_u]\right)\\
&=2(n-1)\left(c_0[\sigma_1]+\sum_{u\neq t} c_u[\zeta_u]\right)+\left(\pm[\sigma_1]+\sum_{u\neq t}\pm[\zeta_u]\right)\\
&=(2(n-1)c_0\pm1)[\sigma_1]+\sum_{u\neq t} (c_u\pm1)[\zeta_u].
\end{align*}

The last equation can not be 0 since the coefficient of $[\sigma_1]$ is odd.
This contradiction completes the proof.
\end{proof}

The characteristicity of pure braid groups are summarized in Table~\ref{tab:char1} and Table~\ref{tab:char2}.

\begin{table}[ht]
\begin{tabular}{c|cccc}
& $n=2$ & $n=3$ & $n=4$ & $n\ge5$\\
\hline
$g=0$ & Table~\ref{tab:char2} & True (\cite{Iv1}) & Table~\ref{tab:char2} & True (\cite{Iv1})\\
$g\ge1$ & True (Lemma~\ref{lem:2char}) & \multicolumn{3}{c}{True (\cite{Iv1})} 
\end{tabular}
\caption{Characteristicity of pure braid groups}
\label{tab:char1}
\end{table}

\begin{table}[ht]
\begin{tabular}{r|cc}
 & $n=2$ & $n=4$ \\
\hline
$g=0, p=0$ & True (Lemma~\ref{lem:2char}) & True \cite{Bel2}\\
$p=1$ & \multicolumn{2}{c}{True \cite{Art}} \\
$p=2$ & False (Lemma~\ref{lem:notchar}) & True (\cite{BM,CC}) \\
$p=3$ & \multicolumn{2}{c}{True (\cite{BM, CC})} \\ 
$p\ge 4$ & \multicolumn{2}{c}{True (Theorem~\ref{thm:Characteristic})} 
\end{tabular}
\caption{Characteristicity of pure braid groups for $g=0$}
\label{tab:char2}
\end{table}

\section{Exceptional cases}
Here we compute $\Aut(\mathbf{B}_{n}(\Sigma))$ for each exceptional case.

\subsection{The torus $T$ with $n\ge 2$}
We consider the composition $\bar\Gamma\circ\Psi\circ (\cdot)_*$.
\begin{align*}
\mathcal{M}^*_n(T)
\stackrel{(\cdot)_*}{\longrightarrow}
\Aut(\mathbf{B}_n(T))
\stackrel{\Psi}{\longrightarrow}
\Aut\left(\mathbf{B}_n(T)_{/Z}\right)
\stackrel{\bar\Gamma}{\longrightarrow}\Aut\left(\mathbf{P}_n(T)_{/Z}\right).
\end{align*}

If $n\ge 3$, then this is an isomorphism by Theorem~\ref{thm:kappa5}.
Therefore $\bar\Gamma$ is surjective and so becomes an isomorphism by Lemma~\ref{lem:vertInj}. 
This implies surjectivity and splitness for $\Psi$ and so
\[
\Aut(\mathbf{B}_n(T))\simeq\tv(\mathbf{B}_n(T))\rtimes\mathcal{M}^*_n(T)\simeq
GL(2,\mathbb{Z})[n]\rtimes\mathcal{M}^*_n(T).
\]

Suppose $n=2$. Then $\mathbf{P}_2(T)_{/Z}\simeq \mathbf{F}_2$ and $\Aut(\mathbf{F}_2)\simeq\mathcal{M}^*_2(T)_{/Z}$.
Hence the composition $\bar\Gamma\circ\Psi\circ (\cdot)_*$ is surjective and so $\bar\Gamma$ becomes an isomorphism as well. Furthermore $\Psi$ is surjective and splits since $\mathcal{M}^*_2(T)=\mathcal{M}^*_2(T)_{/Z}\times Z(\mathcal{M}^*_2(T))$.
Therefore, 
\begin{align*}
\Aut(\mathbf{B}_2(T))&\simeq\tv(\mathbf{B}_2(T))\rtimes\Aut(\mathbf{B}_2(T)_{/Z})
\simeq GL(2,\mathbb{Z})[2]\rtimes\mathcal{M}^*_2(T)_{/Z}.
\end{align*}

As seen in Lemma~\ref{lem:intersection}, the center $Z(\mathcal{M}^*_2(T))\simeq\mathbb{Z}/2\mathbb{Z}$, which is generated by the hyperelliptic involution $s$ interchanging two marked points, corresponds to $-I\in GL(2,\mathbb{Z})[2]\simeq\tv(\mathbf{B}_2(T))$. Note that Theorem~3.1 in \cite{Zh2} missed this point for $n=2$.

\subsection{The once-punctured torus $\Sigma_{1,1}$ with $n\ge 2$}
Since $\mathbf{B}_{n}(\Sigma_{1,1})$ is centerless, the composition $\Gamma\circ(\cdot)_*$ gives
\[
\mathcal{M}^*_{n}(\Sigma_{1,1})\stackrel{(\cdot)_*}{\longrightarrow}
\Aut(\mathbf{B}_{n}(\Sigma_{1,1}))\stackrel{\Gamma}{\longrightarrow}\Aut(\mathbf{P}_{n}(\Sigma_{1,1}))\simeq\mathcal{M}^*_{n+1}(T).
\]

Notice that 
\[
\mathbf{B}_{n}(\Sigma_{1,1})=\ker\left(\pi:\rho^{-1}(\mathbf{S}_1\times \mathbf{S}_n)\to\pi_1(T)\right),
\]
where $\mathbf{S}_1\times \mathbf{S}_n$ is a subgroup of $\mathbf{S}_{n+1}$ fixing the first letter, the map $\rho:\mathbf{B}_{n+1}(T)\to\mathbf{S}_{n+1}$ is the induced permutation, and the map $\pi$ forgets all strands but the first one.
On the other hand $f\in\mathcal{M}^*_{n+1}(T)$ induces $f_*\in\Aut(\mathbf{B}_{n+1}(T))$.

Hence $\Aut(\mathbf{B}_{n}(\Sigma_{1,1}))$ consists of $f\in\mathcal{M}^*_{n+1}(T)$ such that $f_*(\mathbf{B}_{n}(\Sigma_{1,1}))=\mathbf{B}_{n}(\Sigma_{1,1})$. In other words, the conjugation by $\rho(f)$ preserves $\mathbf{S}_n\times\mathbf{S}_1$.
This implies that $\rho(f)$ itself is in $\mathbf{S}_1\times\mathbf{S}_n$ and so $f\in\mathcal{M}^*_{n}(\Sigma_{1,1})$.
Therefore $\Aut(\mathbf{B}_{n}(\Sigma_{1,1}))\simeq\mathcal{M}^*_{n}(\Sigma_{1,1})$.

\subsection{The twice-punctured sphere $A$ with $n=2$}
\label{sec:exceptional}
Let us fix a group presentation for $\mathbf{B}_{2}(A)$ as follows.
\[\mathbf{B}_{2}(A)=\langle \sigma_1, \zeta_1|[\zeta_1,\sigma_1\zeta_1\sigma_1]\rangle
=\langle\delta,\zeta|[\zeta,\delta^2]\rangle,
\]
where $\zeta=\zeta_1$ and $\delta=\zeta_1\sigma_1$. 
Then $\mathbf{B}_2(A)$ is isomorphic to the Artin group $I_2(4)$, or Baumslag-Solitar group $BS(2,2)$ of type $(2,2)$, which have been already studied in \cite{BM, GHMR}.
It is known from \cite[Proposition~9]{BM} that
\[
\Aut(\mathbf{B}_2(A))=\tv(\mathbf{B}_2(A))\rtimes\Aut^*(\mathbf{B}_2(A)),
\]
where $\Aut^*(\mathbf{B}_2(A))$ is the subgroup of length-preserving or reversing automorphisms with respect to the length function
\[
\ell:\mathbf{B}_2(A)\to\mathbb{Z},\quad\ell(\sigma_1)=\ell(\zeta_1)=1.
\]
In other words, $\phi\in\Aut^*(\mathbf{B}_2(A))$ if and only if $\ell\circ\phi=\pm\ell$, and so it is obvious that $\Inn(\mathbf{B}_2(A))\subset\Aut^*(\mathbf{B}_2(A))$.

On the other hand, it is known from \cite[Theorem~D]{GHMR} that
\begin{align*}
\Out(\mathbf{B}_2(A))&\simeq D_\infty\times \mathbb{Z}/2\mathbb{Z}=\langle \pi, s_*| \pi^2,  s_*^2\rangle\times\langle\tau_*|\tau_*^2\rangle,
\end{align*}
where $D_\infty$ is the infinite dihedral group and
\[
s_*:
\begin{cases}
\delta\mapsto\delta^{-1}\\
\zeta\mapsto\zeta \delta^{-2},
\end{cases}\quad
\tau_*:
\begin{cases}
\delta\mapsto\delta^{-1}\\
\zeta\mapsto\zeta^{-1},
\end{cases}\quad
\pi:
\begin{cases}
\delta\mapsto\delta^{-1}\\
\zeta\mapsto\zeta \delta^{-1}.
\end{cases}
\]
The automorphisms $s_*$ and $\tau_*$ are induced from the hyperelliptic involution $s$ and the orientation-reversing involution $\tau$ in $\mathcal{M}^*_2(A)$ so that 
\[
\mathcal{M}^*(A)=\langle s|s^2\rangle\times\langle \tau|\tau^2\rangle,\quad Z(\mathcal{M}^*_2(A))=\langle s|s^2\rangle,
\]
and both $\tau_*$ and $\pi$ are lying in $\Aut^*(\mathbf{B}_2(A))$ but $s_*$ is not.

Let us denote by $\zeta_*$ and $\delta_*$ the inner automorphisms defined as conjugation by $\zeta$ and $\delta$. Then 
\[
\Inn(\mathbf{B}_2(A))=\langle\zeta_*\rangle\ast\langle\delta_*|\delta_*^2\rangle\simeq\mathbf{B}_2(A)_{/Z}.
\]

\begin{prop}
The automorphism group $\Aut(\mathbf{B}_{2}(A))$ is generated by
\[
\{ \zeta_*, \delta_*, \pi, s_*, \tau_*\},
\]
and the complete set of defining relators are as follows.
\begin{itemize}
\item $\delta_*^2$;
\item $\zeta_*\pi=\pi\zeta_*\delta_*$, $(\pi\delta_*)^2$, $[s_*, \zeta_*]$, $[s_*, \delta_*]$, $(\tau_*\zeta_*)^2$, $(\tau_*\delta_*)^2$;
\item $s_*^2$, $\pi^2$, $\tau_*^2$, $(\tau_*s_*)^2$, $(\tau_*\pi)^2=\delta_*$.
\end{itemize}
\end{prop}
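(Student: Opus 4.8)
The plan is to realize $\Aut(\mathbf{B}_2(A))$ as the middle term of the short exact sequence
$$1\longrightarrow\Inn(\mathbf{B}_2(A))\longrightarrow\Aut(\mathbf{B}_2(A))\longrightarrow\Out(\mathbf{B}_2(A))\longrightarrow1$$
and to read off its presentation from presentations of the two outer terms. For the kernel I would take $\Inn(\mathbf{B}_2(A))=\langle\zeta_*,\delta_*\mid\delta_*^2\rangle$ recorded above, and for the quotient the presentation
$$\Out(\mathbf{B}_2(A))=\langle\pi,s_*,\tau_*\mid\pi^2,\,s_*^2,\,\tau_*^2,\,[\tau_*,\pi],\,[\tau_*,s_*]\rangle$$
coming from the isomorphism $\Out(\mathbf{B}_2(A))\simeq D_\infty\times\mathbb{Z}/2\mathbb{Z}$ of \cite{GHMR}, in which $\langle\pi,s_*\rangle\simeq D_\infty$ and $\tau_*$ spans the central $\mathbb{Z}/2\mathbb{Z}$. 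The feature that makes the extension tractable is that $\pi$, $s_*$ and $\tau_*$ are honest automorphisms, so they furnish explicit set-theoretic lifts of the generators of the quotient.

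I would then invoke the classical recipe producing a presentation of an extension $1\to N\to G\to Q\to1$ from presentations of $N$ and $Q$ together with a choice of lifts: the generators of $G$ are those of $N$ together with the chosen lifts of those of $Q$, and the defining relators fall into three families, namely (R1) the relators of $N$; (R2) one relator $\tilde b\,a\,\tilde b^{-1}=w$ for each generator $b$ of $Q$ and each generator $a$ of $N$, where $w\in N$ records the conjugation action; and (R3) one relator $\tilde s=v$ for each relator $s$ of $Q$, where $v\in N$ is the element represented by the lifted word $\tilde s$. Matching against the claimed list, the first bullet $\delta_*^2$ is precisely (R1), the six relators of the second bullet are precisely the (R2) data of the three outer generators acting on $\zeta_*$ and $\delta_*$, and the five relators of the third bullet are precisely the (R3) lifts of the five relators of $\Out(\mathbf{B}_2(A))$; in particular the counts match, so it remains only to verify that these specific words are the correct ones.

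The verification of (R2) uses the identity $\phi^{-1}c_g\phi=c_{\phi(g)}$, where $c_g$ is the inner automorphism associated to $g$ under the identification $\mathbf{B}_2(A)_{/Z}\simeq\Inn(\mathbf{B}_2(A))$, together with the explicit formulas for $\pi$, $s_*$, $\tau_*$ on $\delta$ and $\zeta$. For instance $\pi(\zeta)=\zeta\delta^{-1}$ gives $\pi^{-1}\zeta_*\pi=c_{\zeta\delta^{-1}}=\zeta_*\delta_*$ after using $\delta_*^2=1$, which is the relator $\zeta_*\pi=\pi\zeta_*\delta_*$; the remaining five conjugation relators are obtained in the same fashion, each reducing to a commutation or a sign because $s_*$ and $\tau_*$ invert $\delta$ and because $\delta^{-2}$ is central. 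For (R3), the honest involutions $\pi$, $s_*$, $\tau_*$ lift $\pi^2$, $s_*^2$, $\tau_*^2$ to the identity, and a direct check shows $\tau_*$ commutes with $s_*$ as automorphisms, so $[\tau_*,s_*]$ lifts to $(\tau_*s_*)^2=1$. The one nontrivial lift is that of $[\tau_*,\pi]$: applying $(\tau_*\pi)^2$ to the generators shows that it fixes $\delta$ and sends $\zeta$ to $\delta\zeta\delta^{-1}$, whence $(\tau_*\pi)^2=\delta_*$ rather than the identity.

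The main obstacle is exactly this last computation, i.e. identifying the lift of the commutator $[\tau_*,\pi]$ with the inner automorphism $\delta_*$; this is the single place where the extension data is non-formal, reflecting that $\tau_*$ and $\pi$ commute only modulo $\Inn(\mathbf{B}_2(A))$. A secondary point requiring care is to keep the paper's left-to-right composition convention $(fg)(x)=g(f(x))$ fixed throughout, since an inconsistency there would reverse the order of factors in the second and third bullets and spoil the match.
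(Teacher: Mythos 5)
Your proposal is correct and follows exactly the route the paper takes: the paper's proof is the single sentence ``direct computation using the presentations for inner and outer automorphism groups above and the obvious short exact sequence,'' and your write-up is precisely that computation carried out via the standard presentation-of-an-extension recipe, including the one genuinely non-formal point that $(\tau_*\pi)^2$ fixes $\delta$ and sends $\zeta$ to $\delta\zeta\delta^{-1}=\delta^{-1}\zeta\delta$ (as $\delta^2$ is central), hence equals $\delta_*$.
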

\begin{proof}
This follows by the direct computation using the presentations for inner and outer automorphism groups above and the obvious short exact sequence
\[
1\to\Inn(\mathbf{B}_{2}(A))\to\Aut(\mathbf{B}_{2}(A))\to\Out(\mathbf{B}_{2}(A))\to1.
\]
\end{proof}

Recall that $\tv(\mathbf{B}_{2}(A))$ is an infinite dihedral group admitting the group presentation
\[
\tv(\mathbf{B}_{2}(A))=\left\langle \bar\pi, s_*|s_*^2,\bar\pi^2\right\rangle,
\]
where $\bar\pi$ corresponds to $\pi s_*\pi$ in $\Aut(\mathbf{B}_2(A))$. Hence
\[
\Out^*(\mathbf{B}_2(A))=\Out(\mathbf{B}_2(A))/\tv(\mathbf{B}_2(A))=\langle \tau_*|\tau_*^2\rangle\times\langle\pi|\pi^2\rangle,
\]
and there is a commutative diagram whose rows are exact and split as follows.
\[
\xymatrix{
1\ar[r]& \Inn(\mathbf{B}_2(A))\ar[r]\ar[d]_{\simeq}&\mathcal{M}^*_2(A)_{/Z}\ar[r]\ar@{^(->}[d]^f&\langle\tau_*|\tau_*^2\rangle\ar[r]\ar@{^(->}[d]^{\bar f}&1\\
1\ar[r]& \Inn(\mathbf{B}_2(A))\ar[r]&\Aut^*(\mathbf{B}_2(A))\ar[r]&\Out^*(\mathbf{B}_2(A))\ar[r]&1,
}
\]
where the upper row comes from the Birman exact sequence. Since
\[
\coker f \simeq \coker \bar f = \langle \pi|\pi^2\rangle\subset\Aut^*(\mathbf{B}_2(A)),
\]
we get
\[
\Aut^*(\mathbf{B}_2(A))=\mathcal{M}^*_2(A)_{/Z}\rtimes\langle\pi|\pi^2\rangle.
\]

Finally, this implies that
\begin{align*}
\Aut(\mathbf{B}_2(A))&=\tv(\mathbf{B}_2(A))\rtimes\Aut^*(\mathbf{B}_2(A))\\
&= \tv(\mathbf{B}_2(A))\rtimes\mathcal{M}^*_2(A)_{/Z}\rtimes\langle\pi|\pi^2\rangle.
\end{align*}

This is the only case in which $\Aut(\mathbf{B}_n(\Sigma))$ is not isomorphic to $\tv(\mathbf{B}_n(\Sigma))\rtimes\mathcal{M}^*_n(\Sigma)_{/Z}$.
\begin{proof}[Proof of Theorem~\ref{thm:AutBn}]
Notice that there are no conditions on $\kappa$ or $\chi$.

All cases for $\kappa\ge 4$ and $n\ge 2$ have been covered already by Theorem~\ref{thm:AutBn} for generic cases and the discussion above for exceptional cases.

For $\kappa\le 3$, there are only 3 cases, which are $\mathbf{B}_2(\mathbb{R}^2), \mathbf{B}_2(S^2)$ and $\mathbf{B}_3(S^2)$.
Since the first two groups are abelian and so are the mapping class groups $\mathcal{M}^*_2(\mathbb{R}^2)$ and $\mathcal{M}^*_2(S^2)$, it is obvious that the above formula holds.

The group $\mathbf{B}_3(S^2)$ is finite of order 12 and $\mathbf{B}_3(S^2)_{/Z}$ is isomorphic to the symmetric group $\mathbf{S}_3$ on 3 letters whose center and outer automorphism groups are trivial. This implies that $\Aut(\mathbf{B}_3(S^2))$ is a direct product of $\tv(\mathbf{B}_3(S^2))$ and $\Inn(\mathbf{B}_3(S^2))$. Since $\Inn(\mathbf{B}_3(S^2))\simeq\mathcal{M}^*_3(S^2)_{/Z}$, this completes the proof.
\end{proof}

\end{document}